\newtheorem{thm}{Theorem} 
\newtheorem{lem}{Lemma}[section]
\newtheorem{cor}{Corollary}
\newtheorem{Prop}{Proposition}[section]
\newtheorem*{St*}{Statement}
\newtheorem{Dfi}{Definition}
\newtheorem*{theorem*}{Theorem}
\newtheorem*{corollary*}{Corollary}
\theoremstyle{remark}
\theoremstyle{definition}
\theoremstyle{remark}
\newtheorem{oss}{Remark}[section]
\newcommand{\be}{\begin{equation}}
\newcommand{\ee}{\end{equation}}
\newcommand{\R}{\mathbb{R}}
\newcommand{\N}{\mathbb{N}}
\newcommand{\Q}{\mathbb{Q}}
\newcommand{\spt}[1]{\text{spt}\,\|#1\|}
\newcommand\res{\mathop{\hbox{\vrule height 7pt width .5pt depth 0pt
\vrule height .5pt width 6pt depth 0pt}}\nolimits}
\def\eps{\mathop{\varepsilon}}
\def\s{\sigma}
\def\Om{\Omega}
\def\om{\omega}
\def\p{\partial}
\def\eps{\mathop{\varepsilon}}
\def\Om{\Omega}
\def\om{\omega}
\def\p{\partial}
\DeclareMathAlphabet{\mathscr}{OT1}{pzc}{m}{it}
\begin{document} 

\title{\textbf{Hypersurfaces with mean curvature prescribed by an ambient function: compactness results}}
\author{Costante Bellettini\thanks{Research partially supported by the EPSRC (grant EP/S005641/1).}\\University College London}
\date{}

\maketitle

\begin{abstract}
We consider, in a first instance, the class of boundaries of sets with locally finite perimeter whose (weakly defined) mean curvature is $g \nu$, for a given continuous positive ambient function $g$, and where $\nu$ denotes the inner normal. It is well-known that taking limits in the sense of varifolds within this class is not possible in general, due to the appearence of ``hidden boundaries'', that is, portions (of positive measure with even multiplicity) on which the (weakly defined) mean curvature vanishes, so that $g$ does not prescribe the mean curvature in the limit. As a special instance of a more general result, we prove that locally uniform $L^q$-bounds on the (weakly defined) second fundamental form, for $q>1$, in addition to the customary locally uniform bounds on the perimeters, lead to a compact class of boundaries with mean curvature prescribed by $g$. 

The proof relies on treating the boundaries as \textit{oriented} integral varifolds, in order to exploit their orientability feature (that is lost when treating them as (unoriented) varifolds). Specifically, it relies on the formulation and analysis of a weak notion of \textit{curvature} coefficients for oriented integral varifolds (inspired by Hutchinson's work \cite{Hut}).  

This framework gives (with no additional effort) a compactness result for oriented integral varifolds with curvature locally bounded in $L^q$ with $q>1$ and with mean curvature prescribed by any $g\in C^0$ (in fact, the function can vary with the varifold for which it prescribes the mean curvature, as long as there is locally uniform convergence of the prescribing functions). Our notions and statements are given in a Riemannian manifold, with the oriented varifolds that need not arise as boundaries (for instance, they could come from two-sided immersions).
\end{abstract}

\section{Introduction}
\label{intro}

Let $\Om \subset \R^{n+1}$ be open; let $E_\ell\subset \R^{n+1}$, $\ell\in\N$, and $E$ be Caccioppoli sets in $\Om$ (sets with locally finite perimeter); let $g_\ell:\Om \to \R$, $\ell\in\N$, and $g:\Om\to \R$ be functions in $C^0(\Om)$. Assume that

\begin{equation}
\label{eq:bounds_assumptions_continuous}
g_\ell \to g \text{ in $C^0_{\text{loc}}(\Om)$}; \,\,\sup_\ell \int_{U} |\nabla \chi_{E_\ell}|  <\infty \text{ for any } U \subset \subset \Om;\,\, \chi_{E_\ell} \to \chi_{E} \text{ in $L^1_{\text{loc}}(\Om)$}.
\end{equation}
Note that, given the second condition in (\ref{eq:bounds_assumptions_continuous}), the existence of a Caccioppoli set $E$ for which the third condition holds is automatically true for a subsequence of $\{E_\ell\}$, by $BV$ compactness. (Also note that locally uniform $C^{0,\alpha}$-bounds on $g_\ell$ would guarantee the existence of $g$ for which the first condition holds for a subsequence of $g_\ell$.)

We denote by $\p^* F$ the reduced boundary of a Caccioppoli set $F$, that is, the set of points where the measure-theoretic unit normal exists. This set is $n$-rectifiable. The inner measure-theoretic unit normal, well-defined at points in $\p^* F$, is denoted by $\nu_{F}$. The perimeter measure $|\nabla \chi_{F}|$ (here $\nabla$ is distributional and $\chi_{F} \in \text{BV}_{loc}(\Om)$ by definition of Caccioppoli sets) coincides with $\mathcal{H}^n \res \p^*F$. These are consequences of De Giorgi's fundamental theorem. In particular, the second condition in (\ref{eq:bounds_assumptions_continuous}) can equivalently be written as $\sup_\ell \mathcal{H}^n(\p^*E_\ell \cap U)  <\infty$. We refer to \cite{Mag} for details.

\medskip

The hypersurfaces $\p^* E_\ell$ will be assumed to have mean curvature prescribed by the ambient function $g_\ell$, in a weak sense that will be specified below. (In the case in which $\p E_\ell$ and $g_\ell$ are smooth, then we will just be saying that the classical mean curvature vector of the hypersurface $\p E_\ell$ is $g_\ell \nu_{E_\ell}$, where $\nu_{E_\ell}$ is the inward unit normal.) We will be interested in the mean curvature properties of the ``limit hypersurface'' obtained from the sequence $\p^* E_\ell$; the notion of limit will be specified below. In particular we will be interested in how the mean curvature of the limit hypersurface relates to $g$. 

This problem was successfully addressed by R. Sch\"atzle in \cite{Sch}. We start with a discussion that shows why the notion of ``limit hypersurface'' and its mean curvature properties are delicate matters. An example constructed by K. Gro\ss e-Brauckmann in \cite{GB}, and reported in \cite{Sch}, gives a sequence of CMC embedded surfaces in $\R^3$ with mean curvature $1$ that converge in the Hausdorff distance to a plane $P$. The convergence can be expressed effectively in the language of varifolds, in which case the limit is the integral varifold corresponding to the plane endowed with multiplicity $2$. Varifold convergence makes area continuous (under the limit operation) and keeps track of the nature of the CMC surfaces, that are ``double-layered'' on the plane, with necks that connect the two layers and become dense as $\ell \to \infty$, see the figures in \cite[p. 376]{Sch}. Note that in this example, with reference to the notation used earlier, $g_\ell \equiv g \equiv 1$ and the set $E$ is empty ($E_\ell$ is the set ``between the two layers''). In particular, if we take limits only in the sense of boundaries, or in the sense of currents, we would just get $\p \llbracket E_\ell \rrbracket \to 0$ (as customary, $\llbracket E_\ell \rrbracket$ denotes the $(n+1)$-current of integration on $E_\ell$ and $\p$ the boundary operator on currents). This convergence does not see the accumulation of mass (area) onto the double plane. Varifold convergence, on the other hand, in particular implies that $\mathcal{H}^n \res \p^*E_\ell \rightharpoonup 2 \mathcal{H}^n \res P$ (as Radon measures). Clearly the mean curvature of $P$ is $0$, hence it is not prescribed by the ambient function $g$: the ``ambient-prescribed'' feature is lost in the limit. This is essentially due to a cancellation phenomenon: the two layers of the CMC boundaries approach onto $P$ with mean curvature vectors that are close to being opposite on a large set; this set gets larger and larger as $\ell \to \infty$, and on it the two vectors get closer and closer to being opposite.

\medskip

When we look at the convergence of $\mathcal{H}^n \res \p^*E_\ell \rightharpoonup \|V\|$, where $\|V\|$ denotes the weight measure of the limit varifold, it is immediate to see that $\Sigma=\text{spt}(\|V\|)$ contains $\overline{\p^* E}$. As shown by the previous example, where $\p^* E = \emptyset$ and $\|V\|=2 \mathcal{H}^n \res P$, the strict inclusion is possible. It is also fairly straightforward to check that on $\text{spt}(\mu_V) \setminus \p^* E$ the mean curvature is almost everywhere $0$ and that the multiplicity is almost everywhere even on $\text{spt}(\|V\|) \setminus \p^* E$. A priori it could also happen that the multiplicity of the limit varifold on (a set of positive measure in) $\p^* E$ is odd and higher than $1$. This is however shown to be impossible in \cite{Sch} if $g\neq 0$. In other words, if $g$ does not vanish then the only difference between the limit in the sense of boundaries and the varifold limit is given by ``even-multiplicity minimal pieces'', that appear only in the varifold limit. Any such portion is known in the literature as ``hidden boundary''.

\medskip

As a relevant special instance of Theorem \ref{thm:main} below, we have the following compactness result, valid for a class of boundaries with mean curvature prescribed by an ambient function that does not vanish (or only vanishes on a sufficiently small set). Given the above example, an additional assumption is to be expected, to avoid hidden boundaries. It is known that hidden boundaries do not appear if the Morse index of the hypersurfaces is uniformly bounded; this follows from the (more general) results in \cite{BW1} \cite{BW2}, \cite{SchSim}\footnote{Digressing a bit, the `cancellation of first variation' under study arises in other situations as well, see \cite[Section 1.2]{Sch}. For example, in the context (\cite{HT}) of the Allen--Cahn inhomogeneous PDE, a sequence of solutions with first variation equal to a given positive constant can give rise (as the Allen--Cahn parameter $\eps \to 0$) to a hyperplane with multiplicity $2$ (vanishing first variation). In this case one wants to relate first variations with respect to different functionals (Allen--Cahn for the sequence, area for the limit) and the phenomenon is likely to be more complex. Even assuming uniform Morse index bounds on the sequence, it is at present unknown whether minimal limits can arise (the optimal regularity result for the limit is on the other hand available under index bounds, see \cite[Theorem 1.2]{BW3}).}. The scope here is to identify a weaker assumption that guarantees compactness (in particular, the preservation of the 'prescribed mean curvature condition'). We identify it in an $L^q$ bound on the curvatures, for $q>1$. 

\begin{thm}
\label{thm:C^2_case}
Let $g_\ell, g$ ($\ell\in \N$) be functions and $E_\ell, E$ be sets of locally finite perimeter in $\R^{n+1}$ that satisfy (\ref{eq:bounds_assumptions_continuous}). Assume that $\mathcal{H}^n(\{g=0\})=0$ (in particular, $g \geq 0$ or $g \leq 0$) and that $\p E_\ell = \p^* E_\ell$ are $C^2$-embedded for all $\ell$. Denoting by $\nu_\ell$ the inner unit normal to $\p E_\ell$, assume that the mean curvature vector of $\p E_\ell$ is given by $g_\ell \nu_\ell$ for all $\ell\in \N$.

\noindent Denote by $B^{\ell}$ the second fundamental form of the hypersurface $\p E_\ell$. Assume that for $q>1$ we have, for every open set $U \subset \subset \Om$,
$$\sup_{\ell\in \N} \int_{\p E_\ell \cap U} |B^{\ell}|^q \,d\mathcal{H}^n<\infty.$$
Then the set of locally finite perimeter $E$ in (\ref{eq:bounds_assumptions_continuous}) satisfies: 
\begin{itemize}
 \item $|\p^* E_\ell|$ converges to $|\p^* E|$ in the sense of varifolds (as customary, $|\p^* E|$ is the multiplicity-$1$ varifold associated to the reduced boundary of $E$);
 \item the first variation of $|\p^* E|$ (as a functional on $C^1_c$-vector fields) is represented by integration (with respect to $\mathcal{H}^n \res \p^* E$) of $g \nu$, where $\nu$ is the ($\mathcal{H}^n$-a.e.~defined) inward pointing unit normal to $\p^* E$.
\end{itemize}
\end{thm}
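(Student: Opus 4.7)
My plan is to view Theorem \ref{thm:C^2_case} as a direct consequence of the general compactness result Theorem \ref{thm:main}. I will realise each $\partial E_\ell$ as an oriented integral varifold, extract an oriented limit via that theorem, and then identify this limit with $|\partial^* E|$ (equipped with orientation $\nu_E$) using the convergence of the associated currents together with the hypothesis $\mathcal{H}^n(\{g=0\})=0$.

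First, each $C^2$-embedded hypersurface $\partial E_\ell$ carries a canonical oriented integral varifold structure $V^{or}_\ell$, of multiplicity $1$ and orientation $\nu_\ell$. For such a smooth object the Hutchinson-style curvature coefficients of $V^{or}_\ell$ agree with the components of the classical second fundamental form $B^\ell$, and the classical condition $\vec H = g_\ell \nu_\ell$ translates verbatim into the oriented-varifold formulation of prescribed mean curvature used in the paper. Together with (\ref{eq:bounds_assumptions_continuous}), the bound $\sup_\ell \int_{\partial E_\ell \cap U} |B^\ell|^q \, d\mathcal{H}^n < \infty$ realises precisely the hypotheses of Theorem \ref{thm:main}. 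Applying it, along a subsequence we obtain $V^{or}_\ell \to V^{or}$ in the oriented-varifold topology, where $V^{or}$ is an oriented integral varifold having $L^q_{loc}$ curvature coefficients and mean curvature prescribed by $g$ (in the oriented sense).

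Next I identify the integer-rectifiable current $T_{V^{or}}$ associated to the limit. By construction $T_{V^{or}_\ell} = \partial \llbracket E_\ell \rrbracket$, and oriented-varifold convergence implies convergence of the associated currents. Since $\chi_{E_\ell} \to \chi_E$ in $L^1_{loc}(\Om)$ gives $\partial \llbracket E_\ell \rrbracket \to \partial \llbracket E \rrbracket$ as currents, we conclude $T_{V^{or}} = \partial \llbracket E \rrbracket$, the multiplicity-$1$ current on $\partial^* E$ with orientation $\nu_E$.

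Finally, at $\|V^{or}\|$-a.e.\ $x$ in the support, decompose the orientation measure into $k^+(x)$ copies of some unit normal $\nu_0(x)$ and $k^-(x)$ copies of $-\nu_0(x)$. The identity $T_{V^{or}} = \partial \llbracket E \rrbracket$ forces $k^+ - k^- = 1$ (with $\nu_0 = \nu_E$) $\mathcal{H}^n$-a.e.\ on $\partial^* E$ and $k^+ = k^-$ on the remainder $H := \spt{V^{or}} \setminus \partial^* E$. At any point where both orientations coexist ($k^+, k^- \geq 1$) the prescribed mean curvature condition would require the (weak, $L^q$) mean-curvature vector of the single underlying rectifiable piece to equal simultaneously $g \nu_0$ and $g(-\nu_0)$, forcing $g = 0$ at such a point. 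The hypothesis $\mathcal{H}^n(\{g=0\}) = 0$ then yields $\mathcal{H}^n(\{k^+ \geq 1,\, k^- \geq 1\}) = 0$; this in turn gives $\mathcal{H}^n(H) = 0$ (on $H$, $k^+ = k^- \geq 1$, contradicting the previous line) and $k^+ = 1,\, k^- = 0$ $\mathcal{H}^n$-a.e.\ on $\partial^* E$. Hence $V^{or} = |\partial^* E|$ with orientation $\nu_E$, from which both bullet conclusions follow immediately (varifold convergence from oriented-varifold convergence, and representation of the first variation by $g\nu$ from the single-orientation prescribed mean curvature). The main difficulty lies in this final pointwise step: one must carefully invoke the $L^q$ regularity granted by Theorem \ref{thm:main} in order to assign a well-defined mean-curvature vector to the underlying rectifiable piece — one which is shared by the two coexisting orientations — and thereby force the contradiction $g=0$.
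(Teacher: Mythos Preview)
Your approach is correct and is exactly the paper's: the Remark opening Section~\ref{proofs} verifies (just as you do) that the $C^2$ boundaries $\p E_\ell$ give oriented integral varifolds with curvature coefficients $W^\ell_{ia}=\delta_i(\nu_\ell)_a$ satisfying the $L^q$ bound, so Theorem~\ref{thm:main} applies. Note, however, that your last two paragraphs re-derive what the \emph{conclusion} of Theorem~\ref{thm:main} already hands you, namely the decomposition $V=(\p^*E,\nu_E,1,0)+(\mathcal{R},\textbf{n},\theta,\theta)$ with $\mathcal{R}\subset\{g=0\}$; from there the hypothesis $\mathcal{H}^n(\{g=0\})=0$ instantly gives $\mathcal{H}^n(\mathcal{R})=0$ and both bullets follow, with no further ``pointwise step'' needed.
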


\begin{oss}
It follows from the more general Theorem \ref{thm:main} that $C^2$-embeddedness can be replaced by $C^2$-quasi-embeddedness, in the sense of \cite{BW2}.
\end{oss}

The two conclusions of Theorem \ref{thm:C^2_case} state respectively that 'hidden boundaries' do not arise and that the 'prescribed mean curvature condition' is preserved in the limit. In fact, it is known from the main result in \cite{Sch} (which makes no curvature hypotheses) that when $g\neq 0$ the loss of the 'prescribed mean curvature condition' for a sequence of boundaries arises exactly where 'hidden boundaries' appear (and viceversa). The proof of Theorem \ref{thm:C^2_case} given here is independent of \cite{Sch}.

We treat the hypersurfaces in question neither as integral varifolds nor as boundaries, but rather as \textit{oriented} integral varifolds. This notion was introduced by J. E. Hutchinson in \cite{Hut}, which also provides the basic compactness result for this class (details in Section \ref{oriented_var}). We informally sketch some key ideas here: first we analyse the example of CMC boundaries given earlier from this different point of view.

\medskip

Viewing the smooth boundary $\p E_\ell$ as an oriented integral varifold means that we ``lift'' it to the oriented Grassmannian $\Om \times \mathbb{S}^n$ (as opposed to the unoriented Grassmannian, which corresponds to the more customary notion of (unoriented) varifold). The point $x \in \p E_\ell$ is lifted to $(x, \nu_{E_\ell}(x))$. More precisely, the oriented integral varifold associated to the smooth oriented hypersurface $\p E_\ell$ is the Radon measure $V^\ell$ in $\Om \times \mathbb{S}^n \subset \Om \times \R^{n+1}$ that is supported on the $n$-dimensional submanifold $\{(x, \nu_{E_\ell}(x)): x\in \p E_\ell\}$ and is characterised by the fact that its pushforward via the projection onto the first factor $(x,v)\in \Om \times \R^{n+1} \to x \in \Om$ is the measure $\mathcal{H}^n \res \p E_\ell$. In the example in question, assuming that $P$ is the plane spanned by $\textbf{e}_1, \textbf{e}_2$, taking the limit in the sense of oriented varifold means that we pass to the limit (as measures in $\R^{n+1} \times \R^{n+1}$) the lifts considered; we obtain, as limit, $V=\mathcal{H}^2 \res P_+ + \mathcal{H}^2 \res P_-$, where $P_+=\{(x,v):x\in P, v=\textbf{e}_3\}$ and $P_-=(x,v):x\in P, v=-\textbf{e}_3\}$. Taking the limit as oriented varifolds keeps track of the fact that the double plane arises as the union of two copies of $P$ with opposite orientations (respectively $\textbf{e}_3$ and $-\textbf{e}_3$). In the notation that will be introduced in Section \ref{prelim}, $V^\ell = (\p E_\ell, \nu_{E_{\ell}}, 1, 0)$ and $V=(P, \textbf{e}_3, 1, 1)$.

The key advantage of this is that, not only we can speak of the unit normal in the limit, but crucially the unit normal is (the restriction of) a \textit{fixed} \textit{ambient} function: it is the vector valued function $\vec{v}$, where $v=(v_1, \ldots, v_{n+1})$ is the variable in the second factor of $\Om \times \R^{n+1}$, in which we embed the oriented Grassmannian bundle $\Om \times \mathbb{S}^{n}$.

Given the prescribed nature of the mean curvature for each $\ell$, the mean curvature vector can also be lifted to $\Om \times \R^{n+1}$ as the \textit{vector-valued} function $\vec{\mathscr{g}}_\ell(x,v)=g_\ell(x)v$. This is a (continuous) ambient function, generally depending on $\ell$. Under hypothesis (\ref{eq:bounds_assumptions_continuous}) the sequence $\vec{\mathscr{g}}_\ell$ converges to $\vec{\mathscr{g}}(x,v)=g(x) v$ in $C^0_\text{loc}(\Om \times \R^{n+1})$. (This is independent of the sequence $V^\ell$.) In the specific example, $g_\ell=g\equiv 1$. The issue of ``loss of prescribed mean curvature condition'' corresponds to the fact that the candidate vector field $\vec{\mathscr{g}}$ does not yield the mean curvature vector on the limit $V$.
In the example in question, the candidate is $v$; its restriction gives $\textbf{e}_3$ on $P^+$ and $-\textbf{e}_3$ on $P^-$, which are clearly not the mean curvature vectors of $P$ with orientation (respectively) $\textbf{e}_3$ and $-\textbf{e}_3$. One can note that the average of the candidate mean curvature vectors is $0$, which is the correct mean curvature of $P$, with either orientation; this is a general fact that we will observe in due course, and it corresponds to the cancellation effect of the mean curvature vectors that happens when hidden boundaries form, as discussed earlier.

This suggests that writing the usual first variation formula (in which one uses a vector field in $\Om$) for $V^\ell$, as an identity in the oriented Grassmannian bundle, and passing it to the limit yields an identity that is not separately meaningful for portions that have opposite orientations, like  $P^+$ and $P^-$. For $V$, $\vec{\mathscr{g}}$ itself has no variational meaning, only its average (in $v$) does; in fact, one can write an analogous (valid) identity for $V$, replacing $\vec{\mathscr{g}}$ with any vector-valued function that has the same average values (see Section \ref{lift_mean}). To obtain separate first variation information on $P^+$ and $P^-$, one would like to evaluate the first variation on a vector field that deforms $P^+$ and $P^-$ independently, which is not possible with a vector field in $\Om$: one needs to employ a vector field that \textit{depends on} the oriented unit normal (a function of $(x,v)$). This suggests that full curvatures should be relevant, in order to argue that the first variation operator for the sequence gives rise to the first variation operator for the limit. 

Again in Hutchinson's work \cite{Hut}, the notion of 'curvature integral varifolds' is introduced; this amounts to the fact that the (unoriented) integral varifold admits a weak notion of second fundamental form, or curvature coefficients, defined via the validity of a certain identity in the unoriented Grassmannian bundle. (See also C. Mantegazza \cite{Man}, where the notion is extended to include the possible presence of boundary.) In the absence of any regularity information in the limit, it is natural to employ a weak notion of mean curvature, and, more generally, a weak notion of curvature coefficients.

We will revisit the definition of curvature varifolds given in \cite{Hut}, formulating one that is more suited (for our purposes) to the \textit{oriented} class (details in Section \ref{curv_oriented_var}). The identity that defines the weak notion of curvature coefficients $W_{ab}^{\ell}$ associated to $\p^* E_{\ell}$ holds in $\Om \times \R^{n+1}$ (in which we embed the oriented Grassmannian bundle $\Om \times \mathbb{S}^n$). This identity can be passed to the limit thanks to the $L^q$ bounds assumed. In view of the discussion above on the mean curvature, it is not a priori clear that the limit coefficients $W_{ab}$ appearing in this identity are ``meaningful curvature coefficients'' for the limit oriented integral varifold $V$. 
However, it turns out that the identity for weak second fundamental form of oriented integral varifolds is rigid (unlike the one obtained from the first variation formula alone): the definition is well-posed, more precisely, if curvature coefficients exist, there is a unique choice and they must be odd in $v$. 

We check this by relating the coefficients $W_{ab}$, that are in principle just functions in $L^q(V)$ satisfying an identity, to a ``geometric'' notion of curvature coefficients (Section \ref{geometric_coeff}). To make sense of this, we exploit the fundamental $C^2$-rectifiability of $\|V\|$ given by U. Menne \cite{Men}, which provides countably many $C^2$ hypersurfaces that cover $\|V\|$-almost every point. We use these $C^2$ hypersurfaces to build appropriate test functions in $\Om \times \R^{n+1}$, to be used in the identity.

The parity property removes the issue that was arising earlier
and implies that the candidate mean curvature $\vec{\mathscr{g}}$ is the correct one (Section \ref{proofs}).
Having established that the prescribed mean curvature condition holds in the limit, the fact that the limit is a boundary follows by fairly straighforward arguments. 

\medskip

In view of the ideas described, it is natural to prove a more general result that employs the framework of ``oriented integral varifolds with curvature'' in the assumptions as well. This allows in particular to remove the $C^2$ assumption in Theorem \ref{thm:C^2_case} (which was made to make sense classically of the second fundamental form). The result is a general compactness statement with preservation of the 'prescribed mean curvature condition'; we will give the most general form in Theorem \ref{thm:overall}, once all the relevant notions have been introduced. For the moment, we record the following fairly general instance, which immediately implies Theorem \ref{thm:C^2_case}. Note that $g\in C^0$ is unrestricted, and the limit is not necessarily a boundary (as easy examples show), it is just an oriented integral varifold $V$, and the multiplicity of $\|V\|$ is a.e.~equal to $1$ where $g\neq 0$.

\begin{thm}
\label{thm:main}
Let $g_\ell, g$ ($\ell\in \N$) be functions and $E_\ell, E$ be sets of locally finite perimeter in $\R^{n+1}$ that satisfy (\ref{eq:bounds_assumptions_continuous}). Denoting by $\nu_\ell$ the inner unit normal to $\p^* E_\ell$ (in the sense of De Giorgi), assume that the first variation of $|\p^* E_\ell|$ is given by integration (with respect to $\mathcal{H}^n \res \p^* E_\ell$) of $g_\ell \nu_\ell$ for all $\ell\in \N$. 
Assume that, for every $\ell$, the oriented integral varifold $V^{\ell}=(\p^* E_\ell, \nu_\ell, 1, 0)$ has curvature in the sense of Definition \ref{Dfi:oriented_curvature_varifold} below, with curvature coefficients $W^{\ell}_{ia}$, for $a, i \in \{1, \ldots, n+1\}$. Moreover, let $q>1$ and assume that, for every open set $U \subset \subset \Om$, 
$$\sup_{\ell\in \N} \int_U |W^{\ell}_{ia}|^q \,dV^{\ell}<\infty.$$

Then there exists an oriented integral varifold $V$, to which $V^\ell$ converge (in the sense of oriented varifolds) such that 

\begin{itemize}
 \item  
 $V=(\p^*E, \nu_E, 1, 0) + (\mathcal{R}, \textbf{n}, \theta, \theta)$, where $\mathcal{R}$ is $n$-rectifiable,  $\textbf{n}$ is any measurable determination of unit normal a.e.~on $\mathcal{R}$, $\theta: \mathcal{R}  \to \N\setminus \{0\}$ is $L^1_\text{loc}(\mathcal{H}^n \res \mathcal{R})$ and $\mathcal{R}\subset \{x:g(x)=0\}$;
 \item the generalised mean curvature of $\|V\|$ is given ($\|V\|$-a.e.) by $g \nu_E$ on $\p^* E$ and by $g \textbf{n} = -g \textbf{n} =0$ on $\mathcal{R}$ (as $\|V\|$ has automatically first variation that is an absolutely continuous measure with respect to $\|V\|$, the generalised mean curvature represents the first variation of $n$-area of $\|V\|$).
\end{itemize}
\end{thm}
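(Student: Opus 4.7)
The plan is to treat each $V^\ell$ as an oriented integral varifold with curvature coefficients, apply a Hutchinson-type compactness theorem in that class, and then exploit the rigidity (specifically, the oddness in the orientation variable $v$) of the limit curvature coefficients to deduce both the structural decomposition and the preservation of the prescribed mean curvature condition.

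First, I would extract a subsequential limit. The uniform local $L^q$ bounds on $W^\ell_{ia}$, together with the perimeter bounds in (\ref{eq:bounds_assumptions_continuous}), place $\{V^\ell\}$ in the scope of the compactness theorem for oriented curvature-integral varifolds. Passing to a subsequence, $V^\ell$ converges in the sense of oriented varifolds to an oriented integral varifold $V$ admitting weak curvature coefficients $W_{ia}\in L^q_{\text{loc}}(V)$; the defining curvature identity, set on test functions in $\Om\times\R^{n+1}$, survives the limit thanks to $q>1$ (reflexivity and equi-integrability). By the structure of oriented integral varifolds, $V$ decomposes as $(M,\mathbf{n},\theta^+,\theta^-)$ over an $n$-rectifiable set $M$. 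The convergence $V^\ell\to V$ as oriented varifolds is compatible with the boundary-current convergence $\partial\llbracket E_\ell\rrbracket\to\partial\llbracket E\rrbracket$ forced by the $L^1_{\text{loc}}$ convergence of $\chi_{E_\ell}$; this matches the signed multiplicity $\theta^+-\theta^-$ of $V$ with the signed multiplicity of $\partial^*E$, giving $\mathbf{n}=\nu_E$ and $\theta^+-\theta^-=1$ on $\partial^*E$. Defining $\mathcal{R}:=\{\theta^+=\theta^-\geq 1\}$ (where on $\partial^*E\cap \mathcal{R}$ the pair $(\theta^+,\theta^-)$ is split as $(1,0)+(\theta,\theta)$) yields the claimed decomposition, modulo the containment $\mathcal{R}\subset\{g=0\}$ and the mean-curvature assertion.

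Both remaining conclusions follow from the parity $W_{ia}(x,-v)=-W_{ia}(x,v)$ of the limit curvature coefficients, established via the geometric identification of $W_{ia}$ on the $C^2$-rectifiable support of $\|V\|$ given by Menne's theorem. Contracting the limiting curvature identity produces a first-variation identity on test fields depending on $(x,v)$; evaluating separately on the two orientations and using parity, the sheet $(M,\mathbf{n},\theta^+)$ contributes mean curvature $\theta^+ g\mathbf{n}$ and $(M,-\mathbf{n},\theta^-)$ contributes $-\theta^- g\mathbf{n}$. Since $\|V\|$ automatically has absolutely continuous first variation (because $W_{ia}\in L^q_{\text{loc}}(V)$), its generalised mean curvature $\vec{H}$ is a single vector-valued function, and one obtains $(\theta^++\theta^-)\vec{H}=(\theta^+-\theta^-)\,g\,\mathbf{n}$ a.e.~on $M$. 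Reading this on $\partial^*E\setminus\mathcal{R}$ gives $\vec{H}=g\nu_E$, on $\mathcal{R}\setminus\partial^*E$ gives $\vec{H}=0$ (forcing $g=0$), and on $\partial^*E\cap\mathcal{R}$ forces both $\vec{H}=g\nu_E$ and $g=0$ simultaneously; thus $\mathcal{R}\subset\{g=0\}$ and the mean-curvature formula of the statement follows.

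The main obstacle, and the reason for the oriented-curvature framework, is establishing the parity of the limit coefficients. Parity is trivial for each $V^\ell$, because its support in $\Om\times\R^{n+1}$ consists only of the single lift $(x,\nu_\ell(x))$ over each $x\in\partial^*E_\ell$; in contrast, $\text{spt}(V)$ is generically the full oriented Grassmannian bundle over $M$, and the weak curvature identity alone does not constrain the $v$-dependence of $W_{ia}$ on this larger support. It is precisely the rigidity/uniqueness theorem characterising $W_{ia}$ geometrically from the $C^2$-structure of $\|V\|$ that supplies this parity, and this is the technical heart on which the argument rests.
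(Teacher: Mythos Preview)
Your overall strategy matches the paper's: pass to a subsequential limit of oriented integral varifolds with curvature (Hutchinson compactness plus weak* compactness of the measures $W^\ell_{ia}V^\ell$, using $q>1$), obtain that the limit $V$ satisfies the curvature identity with coefficients $W_{ia}\in L^q_{\text{loc}}(V)$, identify the associated current with $\partial\llbracket E\rrbracket$ to get the decomposition, and invoke the oddness of $W_{ia}$ (proved via Menne's $C^2$-rectifiability and a geometric identification of the coefficients) as the decisive ingredient. All of that is correct and is exactly what the paper does.

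There is, however, one step in your deduction that fails as written. The averaged relation $(\theta^++\theta^-)\vec{H}=(\theta^+-\theta^-)g\,\mathbf{n}$ is correct (it is Lemma~\ref{lem:mean_curv} with $M_i=g v_i$), but it does \emph{not} force $g=0$ on $\mathcal{R}$: on $\mathcal{R}\setminus\partial^*E$ it only yields $\vec{H}=0$, and on $\partial^*E\cap\mathcal{R}$ it yields $\vec{H}=g\nu_E/(2\theta+1)$; neither says anything about $g$. What is needed is the stronger \emph{pointwise} identity $g(x)\,v=\vec{H}(x)$ holding $V$-a.e., and this is where parity actually enters. The paper obtains it by writing the limit identity in two ways: once with the mean-curvature term $-(W_{rr}v_i+W_{ri}v_r)$ (from passing the coefficients $W^\ell$), and once with the term $g(x)v_i$ (from passing $g_\ell(x)v_i$ directly, using $g_\ell\to g$ in $C^0_{\text{loc}}$). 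Comparing gives $-(W_{rr}v_i+W_{ri}v_r)=g v_i$ $V$-a.e.; oddness of $W_{ia}$ makes the left side even in $v$, hence the lift of the single function $\vec{H}\cdot\textbf{e}_i$ (Proposition~\ref{Prop:further_prop_curv_coeff}(a)). Thus $g(x)v=\vec{H}(x)$ $V$-a.e., and evaluating at $v=\pm\mathbf{n}(x)$ on the two-sheet region $\mathcal{R}$ gives $g\mathbf{n}=\vec{H}=-g\mathbf{n}$, hence $g=0$ there. With this correction your argument coincides with the paper's.
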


The first conclusion implies that $\Theta(\|V\|, x)=1$ for $\|V\|$-a.e.~$x\in\R^{n+1}\setminus \{x:g(x)=0\}$. Note that $\mathcal{H}^n \res(\p^*E \cap \{g\neq 0\}) = \|V\|\res \{g\neq 0\}$.

\begin{oss}
Part of the conclusions in Theorem \ref{thm:main} follows from \cite{Sch}, at least if we assume $g_\ell$ to satisfy locally uniform $C^{1}$ bounds. Namely \cite{Sch} gives (even without curvature assumptions) that the (unoriented) varifold obtained in the limit is of the type $|\p^* E| + (\mathcal{R}, 2\theta)$ and that the generalised mean curvature of this varifold is (a.e.) given by $g \nu_E$ on $\p^* E$ and $0$ on $\mathcal{R}$ (in particular, $\mathcal{R}\cap \p^* E\subset \{g=0\}$, up to an $\mathcal{H}^n$-negligeable set). The relevant additional conclusion in Theorem \ref{thm:main} is that $\mathcal{R}$ must be contained in $\{g=0\}$, thereby preserving the ``prescribed mean curvature'' feature. (This is not true without the curvature assumption.)  We will not rely on \cite{Sch} to obtain the partial conclusions just recalled, as our proof will give all the conclusions at once.
\end{oss}

\medskip

Under suitable restrictions on the nodal set $\{g=0\}$ of $g$, Theorem \ref{thm:main} becomes a compactness result in the class of boundaries. This was the case in Theorem \ref{thm:C^2_case}, by dimensional considerations. We provide another instance in the following corollary (whose proof is immediate, once Theorem \ref{thm:main} is proved).

\begin{cor}
\label{cor:boundaries}
Under the assumptions of Theorem \ref{thm:main}, further assume that $g$ is a smooth function such that $S=\{g=0\}$ is a smooth hypersurface with the property that its mean curvature $h_S$ can only vanish (on $S$) to finite order. Then $V=(\p^* E, \nu_E, 1, 0)$, equivalently, $|\p^* E_\ell| \to |\p^* E|$.
\end{cor}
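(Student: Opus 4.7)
The plan is to invoke Theorem \ref{thm:main} to decompose $V$ and then show that the hidden-boundary piece has zero measure. By Theorem \ref{thm:main}, $V=(\p^*E, \nu_E, 1, 0)+(\mathcal{R}, \textbf{n}, \theta, \theta)$ with $\mathcal{R}\subset S:=\{g=0\}$, and the generalised mean curvature $\vec{H}_V$ of $\|V\|$ equals $g\nu_E$ on $\p^*E$ and equals $0$ on $\mathcal{R}$. It suffices to establish $\mathcal{H}^n(\mathcal{R})=0$: then the hidden piece vanishes, $V=(\p^*E,\nu_E,1,0)$, which is equivalent to $|\p^*E_\ell|\to|\p^*E|$ in the varifold sense.

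The strategy for showing $\mathcal{H}^n(\mathcal{R})=0$ is to prove that the mean curvature $h_S$ of $S$ vanishes $\mathcal{H}^n$-a.e.~on $\mathcal{R}$, and then invoke the finite-order vanishing hypothesis. Since $\mathcal{R}$ is $n$-rectifiable and contained in the smooth hypersurface $S$, at $\mathcal{H}^n$-a.e.~$x\in\mathcal{R}$ the approximate tangent plane of $\mathcal{R}$ equals $T_xS$; moreover, by the standard tangent-alignment principle for rectifiable intersections, at $\mathcal{H}^n$-a.e.~$x\in\mathcal{R}\cap\p^*E$ one also has $T_x\p^*E=T_xS$, so the approximate tangent of the whole $V$ is $T_xS$ at $\mathcal{H}^n$-a.e.~$x\in\mathcal{R}$. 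Now use the following principle: for a rectifiable varifold whose tangent plane at $\|V\|$-a.e.~point of a Borel set equals $T_xS$, the normal component (along $\nu_S$) of the generalised mean curvature $\vec{H}_V$ equals $h_S$ at $\|V\|$-a.e.~such point. This follows by testing the first-variation identity against $X=\psi \nu_S$ (with $\nu_S$ a smooth local extension of the unit normal to $S$ and $\psi\in C^1_c$), comparing $\text{div}^{T_xS}(\psi\nu_S)=-\psi h_S$ with the Radon--Nikodym form $-\int\psi(\vec{H}_V\cdot\nu_S)\,d\|V\|$, and applying Lebesgue differentiation to localize (the contributions of the portion of $\|V\|$ with tangent different from $T_xS$ are of smaller order at density points of the $T_xS$-portion). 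Since $\vec{H}_V=0$ on $\mathcal{R}$, we conclude $h_S(x)=0$ at $\mathcal{H}^n$-a.e.~$x\in\mathcal{R}$.

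Finally, by the finite-order vanishing hypothesis, $h_S$ is a smooth function on $S$ whose zero set $\{h_S=0\}\cap S$ has $\mathcal{H}^n$-measure zero (a standard consequence of Lojasiewicz-type estimates: near any zero of finite order $k$, $|h_S|\gtrsim d^k$ for $d$ the distance to a smooth lower-dimensional stratum, so the zero set is locally contained in sets of Hausdorff dimension strictly less than $n$). Hence $\mathcal{H}^n(\mathcal{R})=0$, which proves the corollary. The main technical obstacle is the rigorous justification of the pointwise identity $\vec{H}_V\cdot\nu_S=h_S$ at points where $V$ has tangent $T_xS$; while standard in spirit, it must be carried out with care given the composite varifold structure of $V$ and the potential overlap $\p^*E\cap\mathcal{R}$.
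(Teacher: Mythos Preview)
Your overall strategy matches the paper's: reduce to showing $h_S=0$ $\mathcal{H}^n$-a.e.\ on $\mathcal{R}$, then use finite-order vanishing to conclude $\mathcal{H}^n(\mathcal{R})=0$. The divergence is in how you reach $h_S=0$ on $\mathcal{R}$. The paper does \emph{not} localize the first-variation identity; instead it exploits that $V$ is an oriented \emph{curvature} varifold (with $W_{ia}\in L^1_{\text{loc}}$), a fact inherited from the proof of Theorem~\ref{thm:main}. One simply includes $S$ as one of the $C^2$ hypersurfaces in the $C^2$-rectifiability cover of $\|V\|$ and applies Proposition~\ref{Prop:C2_coeff}: the curvature coefficients $W_{ia}$ of $V$ on $\tilde S$ then agree with the geometric curvature coefficients of $S$, and in particular (via Proposition~\ref{Prop:further_prop_curv_coeff}(a)) the generalised mean curvature of $\|V\|$ equals $h_S\nu_S$ a.e.\ on $S$. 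Since $\vec H_V=0$ on $\mathcal{R}\subset S$, this gives $h_S=0$ there. Your first-variation sketch, by contrast, has a real gap in the localization: testing with $X=\psi_r\nu_S$ produces a gradient term $(P_{T_xR}\nabla\psi_r)\cdot\nu_S$ on the portion where $T_xR\neq T_xS$; after dividing by $r^n$ this term is controlled by $r^{-(n+1)}\|V\|(\{T_xR\neq T_xS\}\cap B_r(x_0))$, so you would need $\|V\|(\{T_xR\neq T_xS\}\cap B_r(x_0))=o(r^{n+1})$, whereas Lebesgue density only yields $o(r^n)$. Your parenthetical ``smaller order'' claim is therefore unjustified. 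The identity $\vec H_V\cdot\nu_S=h_S$ you want is true, but establishing it needs precisely the second-order information that the paper has already packaged in Proposition~\ref{Prop:C2_coeff}.

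A secondary issue: your Lojasiewicz-type justification for ``$\{h_S=0\}$ has $\mathcal{H}^n$-measure zero'' is inappropriate for merely smooth (non-analytic) $h_S$. The paper argues directly: if $\mathcal{H}^n(\mathcal{R})>0$, take a Lebesgue density-$1$ point $x_0$ of the positive-measure set $Z\subset S$ on which $h_S=0$; at such a point every derivative of $h_S$ must vanish (iterating the implicit function theorem: a nonzero gradient of any $D^\alpha h_S$ would force $\{D^\alpha h_S=0\}$ to be locally a hypersurface, contradicting density $1$), so $h_S$ vanishes to infinite order at $x_0$, a contradiction.
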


Theorems \ref{thm:C^2_case}, \ref{thm:main} and Corollary \ref{cor:boundaries} continue to hold if $\Om$ is a Riemannian manifold; we prove this in Section \ref{Riemannian}, Theorem \ref{thm:Riemannian}, after giving the relevant definitions. We note that the class of functions considered in Corollary \ref{cor:boundaries} is generic in the set of smooth functions (in the sense of Baire category) on a compact Riemannian manifold, see \cite[Proposition 3.8]{ZZ}. 

\medskip

\textit{\textbf{A more general class}}. As mentioned earlier, the arguments employed in this work lead to a compactness result, Theorem \ref{thm:overall} in Section \ref{Riemannian}, that permits to pass to the limit the prescribed-mean-curvature condition, regardless of whether the oriented varifolds in question arise as boundaries. (As a special instance we address the case of a sequence of two-sided immersions with mean curvature prescribed by ambient functions, see Theorem \ref{thm:immersions}.) Looking at the problem in this generality has the effect of decoupling the `loss of mean curvature' from the `formation of multiplicity'. Indeed, multiplicity may become higher than $1$ in the region $\{g\neq 0\}$ (unlike in the case of boundaries); the prescribed-mean-curvature condition is nonetheless preserved (with respect to a measurable unit normal on the limit).

\medskip

\textit{\textbf{Regularity aspects}}. We do not address here the question of optimal regularity of the limit varifold. Some regularity conclusions in Theorem \ref{thm:C^2_case} (and, restricting to $\{g\neq 0\}$, in Theorem \ref{thm:main}) are immediate from De Giorgi's and Allard's theorems. Indeed, $\|V\|=\mathcal{H}^n\res \p^* E$ and at every $x\in \p^* E$ the density satisfies $\Theta(\|V\|, x)=1$; as the first variation of $V$ is given by integration of $g \nu_E \in L^\infty(\|V\|;\R^{n+1})$, for every $x\in \p^* E$ there exists an open neighbourhood of $x$ in which $\spt{V}$ is a $C^{1,\alpha}$ (embedded) hypersurface (for every $\alpha\in (0,1)$), and in which $\spt{V}=\p^* E$. As $\mathcal{H}^n(\spt{V}\setminus \p^* E)=0$ we conclude that there exists a closed set $N$ with $\mathcal{H}^n(N)=0$ such that $\spt{V}\setminus N$ is $C^{1,\alpha}$, for every $\alpha \in (0,1)$. (If $g\in C^{k,a}$, the regularity of $\spt{V}\setminus N$ is improved to $C^{k+2,a}$ by standard Schauder theory.)

This conclusion does not really make use of the curvature bounds and finer regularity results can probably be obtained. To give an explicit idea, the following appears to be a plausible conjectural statement (not the most ambitious one), to which we plan to return to in future work. In Theorem 1 assume further that $q=2$, $g_\ell \to g$ in $C^{0,1}_{\text{loc}}(\Om)$, and $g>0$. Then $V=|\p^* E|$ is the varifold of integration over a $C^{2,\alpha}$ embedded hypersurface $M$, and $\text{dim}\left(\spt{V}\setminus M\right)\leq n-2$.

A simple example of this situation can be obtained in $U\subset \R^3$, for $g_\ell\equiv g \equiv 1$, by considering a sequence of (portions of) unduloids $M_\ell$, with constant mean curvature $1$, that converge to the union of two spherical hemispheres $S_1, S_2$ (with constant mean curvature $1$) that intersect tangentially at a point $p=S_1\cap S_2$. We note that, in this case, $M_\ell$ have uniformly bounded areas and Morse index equal to $1$ (for the functional $\mathcal{H}^2(\p^* F \cap U) - \mathcal{H}^3(F \cap U)$). This implies uniform $L^2$ bounds on the second fundamental forms. 
(Uniform mass bounds, mean curvature prescribed by an ambient function and uniform Morse index bounds are the setting for the regularity theory of \cite{BW1}, \cite{BW2}.)

On the other hand, we may consider the hypersurfaces $M_\ell \times \R^k$ in $\R^{3+k}=\R^{n+1}$. They still satisfy that the masses are locally uniformly bounded, their mean curvatures are constant and equal to $1$, and the second fundamental forms are locally uniformly bounded in $L^2$. These hypersurfaces converge to $(S_1 \times \R^k)\cup (S_2 \times \R^k)$. In this example, the Morse index of $M_\ell \times \R^k$ tends to infinity (with $\ell$) in any bounded open set that intersects $\{p\}\times \R^k$. 

\section{Preliminaries}
\label{prelim}

\subsection{Oriented integral varifolds}
\label{oriented_var}
We recall the notion of oriented integral varifold from \cite{Hut}, limiting ourselves to the codimension-$1$ case that will be of interest here. (For the moment, in Euclidean space --- see Section \ref{Riemannian} for the Riemannian setting.) We will identify $\mathbb{S}^n$ with the oriented Grassmannian of $n$-planes in $\R^{n+1}$. The identification is given by the Hodge star operator $\star$; for $v\in \mathbb{S}^n$, $\star v$ is the unit $n$-vector that spans the orthogonal to $v$ and whose orientation is such that $\star v \wedge v$ agrees with the orientation of $\R^{n+1}$.

Let $U\subset \R^{n+1}$ be open. A Radon measure $V$ in the oriented Grassmannian bundle $U\times \mathbb{S}^n$, that we will always view as embedded in $U\times \R^{n+1}$, is called an \textit{oriented $n$-varifold}. Such a $V$ is an oriented \textit{integral} $n$-varifold if there exist an $n$-rectifiable set $R$ in $U$, a couple $(\theta_1, \theta_2)$ of locally integrable $\N$-valued functions on $R$, with $(\theta_1, \theta_2) \neq (0,0)$, and a measurable choice of orientation $\xi:R \to \R^{n+1}$ (the latter means that, for $\mathcal{H}^n$-a.e.~$x\in R$, $\xi(x)\in\mathbb{S}^n \subset \R^{n+1}$ is one of the two choices of unit normal to the approximate tangent at $R$, and the map $\xi$ is measurable) such that $V$ acts on $\varphi\in C^0_c(U\times \R^{n+1})$ as follows:

\begin{equation}
\label{eq:action_oriented_var}
V(\varphi) = \int_R \theta_1(x) \varphi(x,\xi(x))) + \theta_2(x) \varphi(x,-\xi(x))) \,\, d\mathcal{H}^n(x).
\end{equation}
We write $V=(R, \xi, \theta_1, \theta_2)$ in the above situation. The class of oriented integral $n$-varifolds just introduced is denoted by $IV^o_n(U)$ (following the notation in \cite{Hut}).

\medskip

The $n$-current $\textbf{c}(V)$ associated to an oriented integral $n$-varifold $V$ is defined as follows by its action on an arbitrary $n$-form $\eta$ having compact support in $U$:

$$\textbf{c}(V)(\eta)=\int \langle \star v, \eta(x) \rangle d\,V(x,v).$$
It follows that, if $V$ is an oriented integral varifold, the current $\textbf{c}(V)$ is integral as well and, with the above notation, $\textbf{c}(V)$ is the current of integration on $R$ with orientation $\xi$ and multiplicity function $\theta_1-\theta_2$.

Finally, recall that given an oriented varifold $V$, we consider $\textbf{q}:U\times \mathbb{S}^n \to U\times \mathbb{RP}^n$, the projection from the  the oriented to the unoriented Grassmannian bundle. The pushforward measure $\textbf{q}_\sharp V$ is the (unoriented) varifold associated to $V$. With slight abuse of notation, we will identify $\mathbb{RP}^n$ with the Grassmannian $G_n(n+1)$ of unoriented $n$-planes in $\R^{n+1}$, which is in turn identified with the $(n+1)\times (n+1)$-matrices of orthogonal projection onto said plane; the latter is a submanifold of $\R^{(n+1)^2}$. Concretely, for $v \in \mathbb{S}^n \subset \R^{n+1}$ we have $\textbf{q}(v)= \delta_{ij}-v_i v_j$, for $i,j\in \{1, \ldots, n+1\}$; the $(n+1)\times (n+1)$-matrix $\delta_{ij}-v_i v_j$ can be identified with a point in $\R^{(n+1)^2}$ using its entries as coordinates. We will denote by $P_{ij}=\delta_{ij} - v_i v_j$ the projection matrix.

Given an integral oriented $n$-varifold $V$ as above then $\textbf{q}_\sharp V$ is the varifold of integration on $R$ with multiplicity $\theta_1+\theta_2$. The weight measure $\|\textbf{q}_\sharp V\|$ is $(\theta_1 + \theta_2) \mathcal{H}^n \res R$ and is the same as the weight measure $\|V\|$ associated to $V$. (Recall that $\|V\|$ is the pushforward of $V$ under the projection map $U\times \R^{n+1}\to U$ and $\|\textbf{q}_\sharp V\|$ is the pushforward of $\textbf{q}_\sharp V$ under the projection map $U\times \R^{(n+1)^2} \to U$.) The first variation of $\textbf{q}_\sharp V$ is denoted by $\delta (\textbf{q}_\sharp V)$ and is defined as a linear functional on $C^1_c(U; \R^{n+1})$. If the action of $\delta (\textbf{q}_\sharp V)$ is bounded by the $C^0$-norm of the vector field, then $\delta (\textbf{q}_\sharp V)$ is a measure in $U$, denoted by $\|\delta (\textbf{q}_\sharp V)\|$.

Convergence as oriented varifolds means (weak*) convergence as Radon measures in $U\times \mathbb{S}^n\subset U\times \R^{n+1}$. The following compactness result is proved by Hutchinson. 

\begin{theorem*}[compactness\footnote{The fact that there exist (subsequentially) limits that are oriented rectifiable varifolds with multiplicity taking values a.e.~in the half-integers is immediate from Allard's compactness for integral (unoriented) varifolds and from Federer-Fleming's compacntess for integral currents; in \cite{Hut} it is proved that the multiplicity must be a.e.~integer-valued.} for oriented integral varifold, \cite{Hut}]
Let $V_\ell$ be oriented integral varifolds on $U$ such that $\limsup_{\ell \to \infty}\|\textbf{q}_\sharp V\|(U)<\infty$, $\limsup_{\ell \to \infty}\|\delta (\textbf{q}_\sharp V)\|(U)<\infty$ and $\limsup_{\ell \to \infty}M_U(\p \textbf{c}(V_\ell))<\infty$ (where $M_U$ denotes the mass of a current in $U$ and $\p$ is the boundary operator for currents). Then $V_\ell$ subsequentially converges, in the sense of oriented varifolds, to an oriented \emph{integral} varifold $V$.
\end{theorem*}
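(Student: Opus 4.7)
The plan is to obtain $V$ in three stages: first as a weak-$*$ subsequential limit of Radon measures on $U\times\R^{n+1}$; then to identify $\textbf{q}_\sharp V$ as an integer-multiplicity rectifiable $n$-varifold and $\textbf{c}(V)$ as an integral $n$-current, which together force $V$ to be rectifiable with half-integer fiber multiplicities; and finally to upgrade the half-integers to integers. The last stage is the delicate one, and essentially the content of the main argument in \cite{Hut}.

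For the first stage, $\|V_\ell\|(U)=\|\textbf{q}_\sharp V_\ell\|(U)$ is uniformly bounded and each $V_\ell$ is a Radon measure supported on the closed set $U\times\mathbb{S}^n$. Standard weak-$*$ compactness (on a countable exhaustion of $U$ by compact subsets) extracts a subsequence with $V_\ell \to V$ as Radon measures on $U\times\R^{n+1}$; $V$ is automatically supported in $U\times\mathbb{S}^n$ with uniformly bounded weight. Continuity of $\textbf{q}$ yields $\textbf{q}_\sharp V_\ell \to \textbf{q}_\sharp V$ as (unoriented) varifolds, and testing against compactly supported $n$-forms gives $\textbf{c}(V_\ell)\to \textbf{c}(V)$ as $n$-currents.

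For the second stage, Allard's integral compactness theorem applied to $\textbf{q}_\sharp V_\ell$, using the mass and first-variation bounds, produces an $n$-rectifiable set $R\subset U$ and an $\N$-valued multiplicity $m$ such that $\textbf{q}_\sharp V$ is the integer rectifiable varifold associated to $(R,m)$. Federer--Fleming compactness applied to $\textbf{c}(V_\ell)$, using the mass and boundary-mass bounds, yields that $\textbf{c}(V)$ is integral and, by matching weight measures, its rectifiable support sits inside $R$ up to $\mathcal{H}^n$-null sets; it carries a measurable orientation $\xi:R\to\mathbb{S}^n$ and an integer multiplicity $m_c$. Disintegrating $V$ along the projection to $U$ and using that $\textbf{q}_\sharp V$ lives on the Grassmannian lift of the approximate tangent planes of $R$, the fiber of $V$ over $\mathcal{H}^n$-a.e.~$x\in R$ is forced to be $\theta_1(x)\,\delta_{\xi(x)} + \theta_2(x)\,\delta_{-\xi(x)}$ for nonnegative measurable $\theta_1,\theta_2$. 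Comparison with $\textbf{q}_\sharp V$ and $\textbf{c}(V)$ gives $\theta_1+\theta_2 = m\in\N$ and $\theta_1-\theta_2 = m_c\in\Z$, whence $2\theta_1, 2\theta_2 \in\N$.

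The main obstacle is the last stage: excluding genuine half-integer values of $\theta_1,\theta_2$. I would argue pointwise at $\|V\|$-a.e.~$x\in R$ where approximate tangent structure exists for both $\textbf{q}_\sharp V$ and $\textbf{c}(V)$, blow $V$ up, and extract a tangent oriented varifold $C$ at $x$, necessarily supported on $T_x R \times \{\pm\xi(x)\}$ with fibers $(\theta_1(x),\theta_2(x))$. The key observation is that $\{\xi(x)\}$ and $\{-\xi(x)\}$ admit disjoint open neighbourhoods in $\mathbb{S}^n$; one can select a diagonal subsequence of suitable rescalings of $V_\ell$ converging to $C$, and on each of those two disjoint neighbourhoods only one orientation of the (integer) $V_\ell$ contributes, so the mass of each rescaling on that neighbourhood is the mass of an integer (unoriented) varifold. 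Passing to the limit separately on the two sides via Allard's compactness forces $\theta_1(x),\theta_2(x)\in\N$. Formalising the diagonal selection and making the separated integrality pass to the limit is the technical heart of the argument; once carried out, $V=(R,\xi,\theta_1,\theta_2)\in IV^o_n(U)$, and the oriented-varifold convergence $V_\ell\to V$ holds by construction.
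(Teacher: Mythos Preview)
The paper does not give its own proof of this theorem; it is quoted from \cite{Hut}, and the only indication of method is the footnote attached to the statement, which says precisely that half-integer multiplicities follow readily by combining Allard's integral compactness with Federer--Fleming, while the upgrade from half-integers to integers is the substantive content of \cite{Hut}. Your three-stage outline reproduces exactly this structure, and your blow-up sketch for the integrality step is in the spirit of Hutchinson's argument, so there is nothing further to compare against in the present paper.
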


We recall that, given an (unoriented) integral varifold $W$, the first variation (of area) evaluated on $X\in C^1_c(U)$ is $(\delta W)(X) = \int \text{div}_{T} X \,dW(x,T)$, where $\text{div}_T X$ is the divergence with respect to the (unoriented) $n$-plane $T$. When $\delta W$ is a $\R^{n+1}$-valued Radon measure (in $U$), denoted by $\|\delta W\|$, its absolutely continuous part with respect to $\|W\|$ is called generalised mean curvature of $W$, denoted by $\vec{H}_W$, see \cite{SimonNotes}. (We note explicitly, in view of forthcoming sections, that this is a function defined $\|W\|$-a.e., hence in $U$, not in the Grassmannian bundle.)  

We finally notice that if $V^\ell \to V$ as oriented varifolds, then $\textbf{q}_\sharp V^\ell \to \textbf{q}_\sharp V$ as (unoriented) varifolds and $\textbf{c}(V^\ell) \to \textbf{c}(V)$ as currents. The former follows by definition of pushforward measure; the latter follows upon noticing that for any $n$-form $\eta$ with compact support in $U$, the function $\langle \star v, \eta(x) \rangle$ is in $C^0_c(U \times \R^{n+1})$.

\subsection{Lifting the first variation formula to $U \times \R^{n+1}$}
\label{lift_mean}

Let $V$ be an oriented integral $n$-varifold, $V\in IV^o_n(U)$ for $U\subset \R^{n+1}$, recall that $\textbf{q}_\sharp V$ denotes the associated (unoriented) integral $n$-varifold. Assume that the first variation $(\delta \textbf{q}_\sharp V)(X) = \int \text{div}_T X d\|V\|$ is represented by a (vector-valued) function $\vec{H}\in L^p(\|V\|)$ in the sense that, for any $X\in C^1_c(U)$, $(\delta \textbf{q}_\sharp V)(X) =-\int \vec{H} \cdot X \,d\|V\|$. Then the function $\vec{M}(x,v)=\vec{H}(x)$ is defined $V$-a.e. and automatically even, in the sense that $V$-a.e.~we have $\vec{M}(x,v)=\vec{M}(x,-v)$. Moreover, using the fact that $\|V\|$ is the pushforward of $V$, the following identity holds for every $i\in\{1, \ldots, n+1\}$ and for any $\varphi \in C^1_c(U)$, implicitly extended to $U\times \R^{n+1}$ independently of the $v$-variables (here $D_j$ denotes the partial derivative in $x_j$): 
\begin{equation}
\label{eq:lift_first_var}
\int \left((\delta_{ij}-v_i v_j) D_j \varphi + M_i \varphi \right)\,d\,V =0;
\end{equation}
this is checked by choosing the vector field $X = \varphi(x) \textbf{e}_i$ in the first variation formula and computing that $\text{div}_T (\varphi(x) \textbf{e}_i) = P_{ij}D_j \varphi=(\delta_{ij}-v_i v_j) D_j \varphi$ (with implicit summation over repeated indices). Identity (\ref{eq:lift_first_var}) is a lift to the oriented Grassmannian bundle of the first variation formula; in the situation of interest in Theorems \ref{thm:C^2_case} and \ref{thm:main}, where the generalised mean curvature of $V^{\ell}=(\p^*E_\ell, \nu_\ell, 1,0)$ is induced by an ambient function $g_\ell$,  (\ref{eq:lift_first_var}) holds with $\vec{M^\ell}(x,v)=g_\ell(x) v$ in place of $\vec{M}$ and $V^{\ell}$ in place of $V$.

We have, in Theorems \ref{thm:C^2_case} and \ref{thm:main}, a uniform mass bound and a uniform $L^\infty$ bound for the mean curvatures. 
Then we can pass to the limit, both in the sense of (unoriented) varifolds for $\textbf{q}_\sharp V^{\ell}$, by Allard's theorem, or in the sense of oriented varifolds by the result of \cite{Hut} recalled earlier (note that $\textbf{c}(V^\ell)=\p\llbracket E_\ell\rrbracket$ are cycles). We let $V$ be the oriented integral varifold obtained in the (subsequential) limit; then $\textbf{q}_\sharp V$ is the limit of $\textbf{q}_\sharp V^{\ell}$. The interest in considering (\ref{eq:lift_first_var}) is that (since we have uniform bounds on $\int |\vec{M}^\ell|^p dV$, from the $L^\infty$-bounds on $\vec{H}$), the identity can be passed to the limit, obtaining $\int \left((\delta_{ij}-v_i v_j) D_j \varphi + g v_i \varphi \right)\,d\,V =0.$
However, this does not mean that $g \vec{v}$ (appearing in the identity) is the generalised mean curvature of $\textbf{q}_\sharp V$, as shown by the example of CMC surfaces converging to a double plane (see Section \ref{intro}). In that example (with $g_\ell\equiv 1$ for every $\ell$) the limit in the sense of oriented varifold is $(P, \nu, 1, 1)$, where $\nu$ is either of the two choices of unit normal on the plane $P$. The identity (\ref{eq:lift_first_var}) holds with $M_i= v_i$ for the limit. In other words, $\vec{M}=\nu$ on the lift of $(P, \nu)$ and $\vec{M}=-\nu$ on the lift of $(P, -\nu)$. It is only the average of $\vec{M}$ that is is $0$, i.e.~the mean curvature of $P$: as we are about to see, this is not accidental.

\begin{lem}
\label{lem:mean_curv}
Let $V$ be an oriented integral varifold in $U$, $V=(R, \xi, \theta_1, \theta_2)$ with notation as in Section \ref{oriented_var}. Assume that there exists for $i\in \{1,\ldots, n+1\}$ a function $M_i$ in $L^p(V)$ such that the following identity is valid for every $\varphi \in C^1_c(U)$ ($\varphi$ is thought of as a function on $U \times \R^{n+1}$ that is independent of the variables in the second factor\footnote{We do not worry about the fact that $\varphi$ is formally not compactly supported in $U\times \R^{n+1}$, as the support of $V$ is in $U\times \mathbb{S}^n$, hence one can always cut-off $\varphi$ in the $v$-variables.}):

\begin{equation}
\label{eq:mean_curv_coefficients_bundle}
\int \left((\delta_{ij}-v_i v_j) D_j \varphi + M_i \varphi \right)\,d\,V =0.
\end{equation}
The first variation $\delta (\textbf{q}_\sharp V)$ is a Radon measure in $U$ that is absolutely continuous with respect to $\|V\|$; the generalised mean curvature of $\textbf{q}_\sharp V$ (the Radon-Nikodym derivative of $\delta (\textbf{q}_\sharp V)$ with respect to $\|V\|$) is given at a.e.~point in $R$ by the weighted average of the vector-valued functions $\vec{M}$ at the points $(x, \xi)$ and $(x,-\xi)$: 

$$\vec{H}(x) = \frac{\theta_1(x) \vec{M}(x,\xi(x)) + \theta_2(x) \vec{M}(x, -\xi(x))}{(\theta_1 + \theta_2)(x)}.$$
\end{lem}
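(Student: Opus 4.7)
The plan is to test the hypothesis (\ref{eq:mean_curv_coefficients_bundle}) with an $x$-only test function and unpack both sides using the explicit description $V=(R,\xi,\theta_1,\theta_2)$ from (\ref{eq:action_oriented_var}). The key elementary observation is that the projection coefficient $\delta_{ij}-v_iv_j$ is even in $v$, i.e.\ it takes the same value at $\xi(x)$ and at $-\xi(x)$, whereas $M_i(x,v)$ need not be. This parity asymmetry is precisely what produces the weighted average in the conclusion.

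First I would compute the divergence part of the first variation. Taking the vector field $X=\varphi\,\textbf{e}_i\in C^1_c(U;\R^{n+1})$ with $\varphi\in C^1_c(U)$, one has $\mathrm{div}_T X = P_{ij}D_j\varphi = (\delta_{ij}-\xi_i\xi_j)D_j\varphi$ on the approximate tangent at $x\in R$ (using either orientation, since $\xi_i\xi_j=(-\xi_i)(-\xi_j)$). Unpacking the pushforward $\textbf{q}_\sharp V$, whose weight is $(\theta_1+\theta_2)\mathcal H^n\res R=\|V\|$, gives
\[
\delta(\textbf{q}_\sharp V)(\varphi\,\textbf{e}_i)=\int_R(\theta_1+\theta_2)(\delta_{ij}-\xi_i\xi_j)D_j\varphi\,d\mathcal H^n.
\]
On the other hand, using (\ref{eq:action_oriented_var}) and the parity of $\delta_{ij}-v_iv_j$, the divergence-type term in (\ref{eq:mean_curv_coefficients_bundle}) evaluates to the exact same integral. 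Hence the hypothesis rewrites as
\[
\delta(\textbf{q}_\sharp V)(\varphi\,\textbf{e}_i)=-\int M_i\varphi\,dV=-\int_R\bigl(\theta_1 M_i(x,\xi(x))+\theta_2 M_i(x,-\xi(x))\bigr)\varphi(x)\,d\mathcal H^n(x).
\]

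Next I would assemble arbitrary $X=\sum_i X_i\textbf{e}_i\in C^1_c(U;\R^{n+1})$ componentwise and rewrite the right-hand side with respect to the weight measure $\|V\|=(\theta_1+\theta_2)\mathcal H^n\res R$, obtaining
\[
\delta(\textbf{q}_\sharp V)(X)=-\int\frac{\theta_1\vec M(x,\xi)+\theta_2\vec M(x,-\xi)}{\theta_1+\theta_2}\cdot X(x)\,d\|V\|(x).
\]
This already identifies the candidate generalised mean curvature $\vec H$ and shows the first variation is represented by integration against a $\|V\|$-measurable vector field; in particular $\delta(\textbf{q}_\sharp V)$ is absolutely continuous with respect to $\|V\|$ with density $\vec H$.

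The only remaining point to check is that $\vec H$ is actually $L^1_{\text{loc}}(\|V\|)$, so that the formula genuinely defines a Radon measure. Since the $\N$-valued multiplicities satisfy $\theta_i\geq 1$ on $\{\theta_i>0\}$ and $(\theta_1+\theta_2)\in L^1_{\text{loc}}(\mathcal H^n\res R)$, one has $|\vec H|\leq|\vec M(x,\xi)|+|\vec M(x,-\xi)|$ pointwise, and on any compact $K\subset U$
\[
\int_K|\vec H|\,d\|V\|\leq\int_K\bigl(\theta_1|\vec M(x,\xi)|+\theta_2|\vec M(x,-\xi)|\bigr)d\mathcal H^n\leq\|V\|(K)+\int_K|\vec M|^p\,dV,
\]
using $|a|\leq 1+|a|^p$ for $p\geq 1$; both terms are finite by hypothesis. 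There is no real obstacle here: the whole lemma is a bookkeeping consequence of the $v$-parity of $\delta_{ij}-v_iv_j$, and the only place where care is needed is ensuring the integrability just noted so that the representation of $\delta(\textbf{q}_\sharp V)$ is by a bona fide $L^1_{\text{loc}}$ density.
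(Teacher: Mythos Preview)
Your argument is correct and follows essentially the same route as the paper: both test the identity with $X=\varphi\,\textbf{e}_i$, use that $(\delta_{ij}-v_iv_j)$ is even in $v$ to identify the divergence term with $\delta(\textbf{q}_\sharp V)(\varphi\,\textbf{e}_i)$, and then read off the density of $-\int M_i\varphi\,dV$ with respect to $\|V\|$ from the explicit action formula~(\ref{eq:action_oriented_var}). The paper packages the last step as a Radon--Nikodym argument for the pushforward of the signed measure $M_iV$, whereas you compute the density directly; these are the same computation in different clothing.
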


\begin{proof}
For $i$ fixed, consider the measure $M_i V$ and denote by $\mu$ its pushforward by $\pi \circ \textbf{q}$. Then $\mu$ is absolutely continuous with respect to $\|V\|$. Indeed, if $A$ is such that $\|V\|(A)=0$ we also have $V(A\times \R^{n+1})=0$ and hence $(M_i V)(A\times \R^{n+1})=0$, which shows $\mu(A)=0$. We denote by $H_i$ the density of $\mu$ with respect to $\|V\|$, which gives (Radon--Nikodym theorem) $\mu = H_i \|V\|$. We claim that the first variation of $\textbf{q}_\sharp V$ is given by the Radon measure that is absolutely continuous with respect to $\|V\|$ and whose generalised mean curvature (the density of said Radon measure with respect to $\|V\|$) is the function $\vec{H}=(H_1, \ldots, H_{n+1})$. Indeed we have, for every $\varphi \in C^1_c(U)$, that the first variation of $\textbf{q}_\sharp V$ evaluated on the vector field $\varphi \textbf{e}_i$ is given by 

$$\int \text{div}_V (\varphi \textbf{e}_i) d\|V\| = \int P_{ij} D_j \varphi d\|V\|$$
and $P_{ij}=\delta_{ij}-v_i v_j$, so the last term is $\int (\delta_{ij}-v_i v_j) D_j \varphi d\,V$ and this first variation coincides with (by assumption)
$$-\int M_i \varphi \,d\,V  = -\int \varphi d(\pi\circ \textbf{q})_\sharp (M_i V) = -\int H_i \varphi d\|V\| =  -\int \vec{H} \cdot (\varphi \textbf{e}_i) \,d\,\|V\| . $$
By linearity of the first variation, we conclude that for any vector field $X\in C^1_c(U)$ the first variation of $V$ is given by $ -\int \vec{H} \cdot X \,d\,\|V\|$. Note that $\vec{H}\in L^p(\|V\|)$.

To complete the proof of the lemma, note that the measure $M_i V$ is defined (by its action on any $\varphi \in C^0_c(U \times \R^{n+1})$ --- recall that the action of $V$ extends to functions in $L^p(V)$, and $M_i \varphi$ is such a function whenever $\varphi \in C^0_c$) by $(M_i V) (\varphi) = V(M_i \varphi) = \int_E \theta_1(x) M_i(x,\xi(x)))\varphi(x,\xi(x))) + \theta_2(x) (x,-\xi(x)))\varphi(x,-\xi(x))) \,\, d\mathcal{H}^n(x)$. The pushforward of $M_i V$ via $\pi\circ \textbf{q}$ is the measure $\mu$ defined (by its action on $\phi \in C^0_c(U)$) by 
$$\mu(\phi)=\int_E \left(\theta_1(x) M_i(x,\xi(x)) + \theta_2(x)M_i(x,-\xi(x))\right) \phi(x) \,\, d\mathcal{H}^n(x).$$
Since $\|V\|=(\theta_1 + \theta_2) \mathcal{H}^n \res E$ and $(\theta_1 + \theta_2)\neq 0$ a.e., we conclude that, for $\|V\|$-a.e.~$x$, $H_i(x) = \frac{\theta_1(x) M_i(x,\xi(x)) + \theta_2(x) M_i(x, -\xi(x))}{(\theta_1 + \theta_2)(x)}$.
\end{proof}
 
We note that, if $\theta_1\neq 0, \theta_2\neq 0$ for the limit $V$ of $V^\ell$, we have that the function $gv$ is odd on $V$, while $\vec{M}$ in (\ref{eq:lift_first_var}) is naturally even; hence $gv$ cannot restrict to the ``lift of the mean curvature'', unless the limit $\|V\|$ has a.e.~multiplicity $1$. (This is in agreement with \cite[p. 375, case (ii)]{Sch}.)
 
\subsection{Oriented integral varifolds with curvature}
\label{curv_oriented_var}

\begin{Dfi}
\label{Dfi:oriented_curvature_varifold}
An oriented integral $n$-varifold $V$ in $U\subset \R^{n+1}$ (recall that $V$ is a Radon measure in $U \times \mathbb{S}^{n} \subset U \times \R^{n+1}$) has curvature if there exist real-valued $V$-measurable functions $W_{i a}$, for $i, a \in \{1, \ldots, n+1\}$, defined $V$-a.e., such that the following identity holds for any $\phi \in C^1_c(U \times \R^{n+1})$ (with summation over a repeated index):
\begin{equation}
\label{eq:oriented_curvature_varifold}
\int \left((\delta_{ij}-v_i v_j) \, D_j \phi + W_{i a} \, D^*_{a} \phi  - (W_{rr} v_i + W_{ri} v_r) \phi\right) d\, V = 0.
\end{equation}
The notation $D^*_a$ denotes the partial derivative with respect to the variable $v_a$ in the second factor $\R^{n+1}$, while $D_j$ denotes the partial derivative with respect to the variable $x_j$ in $U$. We say that $V$ has curvature in $L^q$ if $W_{i a} \in L^q(V)$ for all $i,a$ and we refer to the collection $W_{ia}$ ($i, a \in \{1, \ldots, n+1\}$) as curvature coefficients.
\end{Dfi}

\begin{oss}
\label{oss:L1_firstvar}
If $V$ is as in Definition \ref{Dfi:oriented_curvature_varifold} with curvature in $L^1_{\text{loc}}(V)$, then the first variation of $\textbf{q}_\sharp V$ is representable by integration of an $L^1_{\text{loc}}(\|V\|)$ generalised mean curvature. To see this, test (\ref{eq:oriented_curvature_varifold}) with $\varphi\in C^1_c(U)$ (extended independently of the $v$-variables to $U\times \R^{n+1}$); we obtain $\int P_{is} (D_s \varphi)  - (W_{rr} v_i + W_{ri} v_r) \varphi d\, V = 0$. Then Lemma \ref{lem:mean_curv}, used with $M_i=W_{rr} v_i + W_{ri} v_r \in L^1_{\text{loc}}(V)$, implies that the first variation of $\textbf{q}_\sharp V$ is represented by integration of the average of $M_i$, which is then in $L^1_{\text{loc}}(\|V\|)$.
\end{oss}

The following argument justifies the requirement of (\ref{eq:oriented_curvature_varifold}) and shows that a $C^2$ hypersurface oriented by a choice $\nu$ of unit normal is naturally an oriented $n$-varifold that satisfies Definition \ref{Dfi:oriented_curvature_varifold}. We use the notation (as in \cite{Hut}) $\delta_i=P_{is}D_s$. Equivalently, for any function $f$, $\delta_i f$ is the $i$-th component of $\nabla^T f$, the projection of $\nabla f$ onto the tangent space, or also $\delta_i f = \nabla_{\textbf{e}_i^T} f$, where $\textbf{e}_i^T$ is the projection of $\textbf{e}_i$ onto the tangent space.

\medskip

\noindent \textit{Claim}. Let $M$ be an oriented $C^2$-embedded hypersurface in $U$, with a chosen unit normal $\nu$. Let $V$ be the oriented integral varifold associated to $(M, \nu, \theta_1, \theta_2)$. Then $V$ has curvature in the sense of Definition \ref{Dfi:oriented_curvature_varifold}, with $W_{i a}(x,v)=\delta_i (\nu_a)$ for $(x,v)$ such that $v=\nu(x)$ and $W_{i a}(x,v)=-\delta_i (\nu_a)$ for $(x,v)$ such that $v=-\nu(x)$, where $\nu_a=\nu\cdot \textbf{e}_a$. (This defines $W_{i a}$ for $V$-a.e.~$(x,v)$.)

\medskip

\noindent  The verification of the claim involves only the use of the divergence theorem, (just as in the case of (unoriented) curvature varifolds in \cite{Hut}) for the vector field $X = \phi(x,\nu(x)) \textbf{e}_i$ (where $\phi$ is as in Definition \ref{Dfi:oriented_curvature_varifold}). Denote the tangential part of $X$ by $X^T = \phi P_{ri} \textbf{e}_r$. Then
$\nabla^{M}(X^T)^r = P_{si}\frac{\p}{\p x_s} (\phi P_{ri}) \textbf{e}_i$, so that $\text{div}_{M} X^T = \textbf{e}_r \cdot \nabla^{M}(X^T)^r = P_{rs} \frac{\p}{\p x_s} (\phi P_{ri})$. Then the divergence theorem $\int_{M} \text{div}_{M} X^T d \mathcal{H}^n = 0$ gives (using $P_{rs}P_{ri}=P_{si}$ and $P_{rs}=P_{sr}$)

$$\int_{M} P_{rs} \frac{\p}{\p x_s} (\phi P_{ri}) d \mathcal{H}^n=\int_{M} P_{rs} \left(\frac{\p}{\p x_s} \phi(x,\nu(x))\right) P_{ri} + \phi \delta_r P_{ri}  =$$
$$= \int_{M} P_{is} (D_s \phi)  +  (D^*_{\ell} \phi ) \delta_{i} \nu_\ell + \phi \delta_r P_{ri}=0.$$
Note that $\delta_i P_{jk} = \delta_i ( \delta_{jk}-v_j v_k) = -(\delta_i v_j) v_k  - (\delta_i v_k) v_j$. The functions $P_{jk}$, $v_j, v_k$ are evaluated at $(x,\nu(x))$, hence $\delta_i (v_\ell(x,\nu(x))=\delta_i \nu_\ell$ (where $\nu_\ell = \nu \cdot \textbf{e}_\ell$). Therefore we can define $W_{i \ell}(x,v)=\delta_i \nu_\ell$ for $(x,v)$ such that $v=\nu(x)$. The previous identity becomes

$$\int \left((\delta_{is}-v_i v_s) \,D_s \phi + W_{i \ell}\, D^*_{\ell} \phi  - \phi (W_{rr} v_i + W_{ri} v_r)\right)\,d \mathcal{H}^n=0,$$
with the integrand evaluated at $(x, \nu(x))$. Similarly we set $W_{i \ell}(x,v)=-\delta_i \nu_\ell$ for $(x,v)$ such that $v=-\nu(x)$ and we get
$$\int \left((\delta_{is}-v_i v_s) \,D_s \phi + W_{i \ell}\, D^*_{\ell} \phi  - \phi (W_{rr} v_i + W_{ri} v_r)\right)\,d \mathcal{H}^n=0,$$ 
with the integrand evaluated at $(x, -\nu(x))$. Note that we have defined $W_{i\ell}$ $V$-a.e. A linear combination of the two identities with coefficients respectively $\theta_1, \theta_2$ gives (\ref{eq:oriented_curvature_varifold}).

\medskip

We note that, in the above example, $W_{i \ell}$ is odd in $v$, in the sense that $V$-a.e.~we have $W_{i\ell}(x,v)=-W_{i\ell}(x,-v)$. It does not seem a priori clear that the coefficients $W_{ia}$ in (\ref{eq:oriented_curvature_varifold}) are necessarily odd in $v$. A related issue is whether there could be distinct choices of $W_{ia}$ for which (\ref{eq:oriented_curvature_varifold}) holds. (As seen in Section \ref{lift_mean}, if we require the validity of (\ref{eq:oriented_curvature_varifold}) only for test functions $\varphi$ independent of $v$, then there are indeed multiple choices for $M_i$, since only the weighted average is uniquely determined.) The following result is key for our arguments and settles this issue.
 
\begin{Prop}[Curvature coefficients are unique and odd]
\label{Prop:odd_unique}
Let $V$ be an oriented integral varifold with curvature, as in Definition \ref{Dfi:oriented_curvature_varifold}, and assume that $W_{ia}\in L^1_{\text{loc}}(V)$ (for all $i,a$). Then $W_{ia}$ are odd functions, that is, $V$-a.e.~we have $W_{i a}(x,v)=-W_{i a}(x,-v)$. Moreover, any two choices of curvature coefficients in (\ref{eq:oriented_curvature_varifold}) must agree $V$-a.e. 
\end{Prop}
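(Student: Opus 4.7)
The strategy is to prove both statements in tandem, through test functions of product form $\phi(x,v)=\chi(x)\psi(v)$ with $\psi$ supported in a small ball around some $v_0\in\mathbb{S}^n$ avoiding $-v_0$, so that the two orientation sheets of $V$ decouple onto disjoint measurable subsets of $R$.

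\emph{Step 1: uniqueness.} Given two candidate coefficient families $W_{ia}$ and $W'_{ia}$, set $\Delta_{ia}=W_{ia}-W'_{ia}$; subtracting the two instances of (\ref{eq:oriented_curvature_varifold}) eliminates the $(\delta_{ij}-v_iv_j)D_j\phi$ term. Using the representation $V=\int_R(\theta_1\delta_{\xi(x)}+\theta_2\delta_{-\xi(x)})\,d\mathcal{H}^n(x)$ from Section \ref{oriented_var}, and testing with $\phi(x,v)=\chi(x)\psi(v)$ where $\chi\in C^1_c(U)$ is arbitrary and $\psi\in C^1_c(B(v_0,\epsilon))$ with $\epsilon<1/2$, only the sheet with $\pm\xi(x)\in B(v_0,\epsilon)$ contributes. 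These contributions are supported on the disjoint subsets $A_\pm:=\{x\in R:\pm\xi(x)\in B(v_0,\epsilon)\}$, so arbitrariness of $\chi$ yields the pointwise relation
\[\Delta_{ia}(x,\xi(x))\,D^*_a\psi(\xi(x))-\bigl(\Delta_{rr}(x,\xi)\xi_i+\Delta_{ri}(x,\xi)\xi_r\bigr)\psi(\xi(x))=0\]
at $\mathcal{H}^n$-a.e.\ $x\in A_+\cap\{\theta_1>0\}$. Letting $\psi$ range over a countable $C^1$-dense family of $C^1_c(B(v_0,\epsilon))$ and exploiting that $\psi(\xi(x))$ and $D^*_a\psi(\xi(x))$ can be freely and independently prescribed (they are just the value and gradient of a $C^1_c$ function at the single point $\xi(x)$), one extracts $\Delta_{ia}(x,\xi(x))=0$. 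A countable cover of $\mathbb{S}^n$ by such balls and the symmetric argument on the $-\xi$ sheet then give $\Delta=0$ $V$-a.e.

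\emph{Step 2: oddness.} By Step 1 it suffices to produce \emph{some} odd coefficients $W^\star_{ia}$ satisfying (\ref{eq:oriented_curvature_varifold}). By Remark \ref{oss:L1_firstvar} the underlying unoriented varifold $\textbf{q}_\sharp V$ has $L^1_{\text{loc}}$ generalised mean curvature, so Menne's $C^2$-rectifiability theorem \cite{Men} supplies countably many $C^2$-embedded hypersurfaces $M_k$ whose union exhausts $\|V\|$-a.e. I would arrange these pairwise disjoint (modulo $\mathcal{H}^n$-null sets), pick $C^1$ unit normals $\nu_k$ on $M_k$ coinciding $\mathcal{H}^n$-a.e.\ with $\pm\xi$ on $R\cap M_k$, and further decompose by the $\N$-valued (hence, $\mathcal{H}^n$-a.e.\ locally constant on each piece) multiplicities $\theta_1,\theta_2$. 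The Claim at the end of Section \ref{curv_oriented_var} then furnishes
\[W^\star_{ia}(x,v)=\pm\,\delta_i(\nu_k)_a(x)\quad\text{when } v=\pm\nu_k(x),\]
satisfying (\ref{eq:oriented_curvature_varifold}) on $V$ restricted to each piece. Summing over $k$ and invoking countable additivity gives global coefficients $W^\star$ for $V$, manifestly odd in $v$ by construction. Step 1 then identifies $W$ with $W^\star$ $V$-a.e., giving oddness of $W$.

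\emph{Main obstacle.} The delicate step is the oddness part: although Menne's theorem hands over the $C^2$-pieces once the $L^1_{\text{loc}}$ mean curvature is in place, assembling the piecewise geometric coefficients into a globally valid $W^\star$ requires care to ensure that no spurious interface contributions arise between distinct $M_k$'s, and that the measurable selection $\xi$ can be reconciled with the smooth $\nu_k$'s outside $\mathcal{H}^n$-null sets. This packaging constitutes the technical heart of the forthcoming Section \ref{geometric_coeff}.
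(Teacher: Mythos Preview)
Your Step~1 (uniqueness) is correct and in fact more elementary than the paper's route: the paper obtains uniqueness only as a byproduct of Proposition~\ref{Prop:C2_coeff}, which identifies $W_{ia}$ with the classical curvature coefficients of the $C^2$ covering hypersurfaces via a blow-up argument at Lebesgue points. Your duality argument---decoupling the two sheets with a bump $\psi(v)$ supported away from the antipode, then freeing $\psi(\xi(x))$ and $\nabla\psi(\xi(x))$ through a countable $C^1$-dense family---bypasses $C^2$-rectifiability entirely for the uniqueness claim.

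Step~2, however, has a genuine gap. The defining identity (\ref{eq:oriented_curvature_varifold}) is an \emph{integration-by-parts} relation, not a pointwise constraint, so it does not localise to measurable pieces of $V$: the Claim in Section~\ref{curv_oriented_var} rests on the divergence theorem on the \emph{full} $C^2$ hypersurface $M$ with unit multiplicity, and yields nothing for $V$ restricted to the measurable set $R\cap M_k$ with weights $(\theta_1,\theta_2)$. Your parenthetical assertion that $\N$-valued multiplicities are ``$\mathcal{H}^n$-a.e.\ locally constant on each piece'' is simply false (integer-valued measurable functions need not be locally constant anywhere), and even after passing to the level sets $\{\theta_1=p,\theta_2=q\}$---which are merely measurable---the divergence theorem still does not apply. ``Countable additivity'' therefore does not assemble the piecewise identities into a global one for $W^\star$; there is no mechanism to control the interface terms. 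The obstacle you flag at the end is not packaging but the heart of the matter: for a general $C^2$-rectifiable varifold one cannot verify that the geometrically defined $W^\star$ satisfy (\ref{eq:oriented_curvature_varifold}) without already assuming some coefficients do.

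The paper sidesteps this completely. Rather than constructing a candidate $W^\star$ and checking the identity, it tests the \emph{given} identity for $W$ with $\phi=\psi(v)\,\beta(x,v)\,\phi_a(x,v)$, where $\phi_a(x,v)=v_a-D_af/|\nabla f|$ vanishes on the lift $\tilde{M}_j$, $\psi$ cuts off the opposite sheet, and $\beta=\chi_r/\sigma(x_0,v_0)$ is a rescaled bump. Sending $r\to 0$ at a Lebesgue point $(x_0,v_0)$ of $W_{ia}$, and using that $V\res(U\times\R^{n+1}\setminus\tilde{M}_j)$ has vanishing $n$-density there, forces $W_{ia}(x_0,v_0)=\pm\delta_i(\nu_{M_j})_a(x_0)$. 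Oddness and uniqueness then follow at once. The global identity is used only once, at small scale around a single point, so no interface terms ever appear.
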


\begin{oss}
If we were to add the requirement that $W_{ia}$ are odd in Definition \ref{Dfi:oriented_curvature_varifold}, we would not gain any advantage in our forthcoming arguments, since the odd condition does not pass in any standard way to the limit under oriented varifold convergence.
\end{oss}

We will prove Proposition \ref{Prop:odd_unique} in Section \ref{geometric_coeff}. We further have the following properties for curvature coefficients. We note that (a) is a consequence of Proposition \ref{Prop:odd_unique}, while (c) follows from the stronger statement estalished in Section \ref{geometric_coeff}, so we postpone its proof. We will make use of (a) in the proof of Theorem \ref{thm:main} in Section \ref{proofs}; (b) and (c) are, strictly speaking, not used in the proof of Theorem \ref{thm:main} and are given here for completeness.

\begin{Prop}
\label{Prop:further_prop_curv_coeff}
Let $V$ be an oriented integral varifold with curvature, as in Definition \ref{Dfi:oriented_curvature_varifold}, and assume that $W_{ia}\in L^1_{\text{loc}}(V)$ (for all $i,a$). Then
\begin{enumerate}
 \item[(a)] the function $-(W_{rr} v_i + W_{ri} v_r)$ appearing in (\ref{eq:oriented_curvature_varifold}) is $V$-a.e.~the (even) lift of $\vec{H}\cdot \textbf{e}_i$, where $\vec{H}$ is the generalised mean curvature vector of $\textbf{q}_\sharp V$;
 
  \item[(b)] $W_{ij} v_j=0$ $V$-a.e.~;
  
  \item[(c)]  $V$-a.e. we have $W_{ij}=W_{ji}$, and $W_{ri} v_r=0$ (so that the mean curvature term in (\ref{eq:oriented_curvature_varifold}) can be written as $-W_{rr} v_i$).
\end{enumerate}
\end{Prop}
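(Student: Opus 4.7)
\medskip

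\noindent \textbf{Plan of proof.} The plan is to obtain (a) as a direct consequence of Proposition \ref{Prop:odd_unique} combined with Lemma \ref{lem:mean_curv}, to prove (b) by a clever choice of test function that vanishes on $U\times \mathbb{S}^n$ but whose $v$-gradient is the radial vector $v$, and to note that (c) requires the geometric identification of the $W_{ia}$ (hence is deferred to Section \ref{geometric_coeff}).

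\medskip

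\noindent \textit{Part (a).} First, I would test the identity (\ref{eq:oriented_curvature_varifold}) with a function $\varphi \in C^1_c(U)$, extended to $U \times \R^{n+1}$ independently of $v$; the $D^*_a \phi$ term vanishes and one obtains the identity in Lemma \ref{lem:mean_curv} with $M_i = -(W_{rr} v_i + W_{ri} v_r)$. By Proposition \ref{Prop:odd_unique}, the functions $W_{ia}$ are odd in $v$; therefore each product $W_{ab}(x,v)\, v_c$ is even in $v$, and in particular $M_i$ is even. Now Lemma \ref{lem:mean_curv} identifies the generalised mean curvature of $\textbf{q}_\sharp V$ with the weighted average of the two values $\vec{M}(x,\xi(x))$ and $\vec{M}(x,-\xi(x))$; because $\vec{M}$ is even, the two values coincide and both equal the common average. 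Hence $M_i(x,v) = -(W_{rr} v_i + W_{ri} v_r)(x,v)$ is $V$-a.e.~equal to $\vec{H}(x)\cdot \textbf{e}_i$, so it is the even lift of the $i$-th component of the generalised mean curvature.

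\medskip

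\noindent \textit{Part (b).} The idea is to exploit the fact that $V$ is concentrated on $U \times \mathbb{S}^n$, so any test function that vanishes on the unit sphere in the $v$-variables kills most terms in (\ref{eq:oriented_curvature_varifold}), leaving a condition purely on $W_{ia} v_a$. Concretely, fix $\psi \in C^1_c(U)$ and a smooth cutoff $\chi \in C^1_c(\R^{n+1})$ that equals $1$ on an open neighbourhood of $\mathbb{S}^n$, and set
\[
\phi(x,v) \;=\; \psi(x)\, \chi(v)\, (|v|^2 - 1) \;\in\; C^1_c(U \times \R^{n+1}).
\]
On $U\times \mathbb{S}^n$ one has $\phi = 0$ and $D_j \phi = (D_j \psi)\,\chi(v)\,(|v|^2-1) = 0$; moreover $D^*_a \phi = \psi(x)\,[\,(D^*_a\chi)(|v|^2-1) + 2\chi\, v_a\,] = 2\psi(x)\, v_a$ on the sphere, because $\chi \equiv 1$ and $|v|^2-1=0$ there. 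Plugging $\phi$ into (\ref{eq:oriented_curvature_varifold}) collapses all but one term, yielding
\[
\int 2\, W_{ia}(x,v)\, v_a\, \psi(x)\,\, dV(x,v) \;=\; 0 \qquad \text{for all } \psi\in C^1_c(U)\text{ and all } i.
\]
Pushing the measure $(W_{ia} v_a)\,V$ forward to $U$ and using the arbitrariness of $\psi$ gives $W_{ia}\, v_a = 0$ in $L^1_\text{loc}$-sense, and hence $V$-a.e., for each fixed $i$.

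\medskip

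\noindent \textit{Part (c).} For the symmetry $W_{ij} = W_{ji}$ and the identity $W_{ri} v_r = 0$, the purely weak/analytic route (testing in (\ref{eq:oriented_curvature_varifold})) does not seem to suffice, because these properties encode the symmetry of the second fundamental form and the tangentiality of the principal directions, which are intrinsically geometric. The plan, as the author indicates, is to defer (c) to Section \ref{geometric_coeff}: using the $C^2$-rectifiability of $\|V\|$ provided by Menne's theorem, one covers $\|V\|$-a.e.~point by countably many $C^2$ hypersurfaces and builds appropriate test functions in (\ref{eq:oriented_curvature_varifold}) to show that, $V$-a.e., $W_{ia}$ must coincide with the geometric expression $\pm\,\delta_i\nu_a$ attached to the corresponding $C^2$-piece (with sign determined by the orientation of $v$ relative to $\nu$). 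From that identification, both the symmetry on the tangent space and the vanishing on the normal direction follow from the classical properties of the smooth second fundamental form. The main obstacle, and hence the most delicate step of the whole proposition, is precisely this geometric identification of the abstract coefficients $W_{ia}$, which underlies both Proposition \ref{Prop:odd_unique} and part (c).
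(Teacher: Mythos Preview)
Your approach to (a) and (c) matches the paper exactly: (a) is obtained by testing (\ref{eq:oriented_curvature_varifold}) with $v$-independent functions and combining Lemma~\ref{lem:mean_curv} with the oddness from Proposition~\ref{Prop:odd_unique}; (c) is deferred to Section~\ref{geometric_coeff}, where the symmetry $W_{ij}=W_{ji}$ is read off from the classical second fundamental form after the geometric identification of $W_{ia}$.

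For (b), your trick is the paper's trick (multiply a test function by $|v|^2-1$, which vanishes on $\mathbb{S}^n$), but your final step has a small gap. You only allow $\psi\in C^1_c(U)$, so the identity you obtain is
\[
\int W_{ia}(x,v)\,v_a\,\psi(x)\,dV(x,v)=0\qquad\text{for all }\psi\in C^1_c(U),
\]
which says only that the \emph{pushforward} of $(W_{ia}v_a)V$ to $U$ vanishes, i.e.\ that the weighted average $\frac{\theta_1 (W_{ia}v_a)(x,\xi)+\theta_2 (W_{ia}v_a)(x,-\xi)}{\theta_1+\theta_2}=0$ for $\|V\|$-a.e.~$x$. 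This does not by itself give $W_{ia}v_a=0$ $V$-a.e. Two easy fixes are available. First, you can simply allow $\psi\in C^1_c(U\times\R^{n+1})$ from the outset: your computation is unchanged, since both $D_j\phi$ and the extra term $\chi(|v|^2-1)D^*_a\psi$ in $D^*_a\phi$ still vanish on $\mathbb{S}^n$; this is exactly what the paper does (it tests with $\phi=(1-v_pv_p)\psi$ for arbitrary $\psi\in C^1_c(U\times\R^{n+1})$, then concludes by arbitrariness of $\psi$). Second, you could keep $\psi=\psi(x)$ but note, as you did in (a), that $W_{ia}v_a$ is even in $v$ (odd $\times$ odd); then the vanishing average forces the common value to be zero $V$-a.e. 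Either completion closes the gap.
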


\begin{proof}
(a) As in Remark \ref{oss:L1_firstvar}, for any $\varphi\in C^1_c(U)$ (extended independently of the $v$-variables to $U\times \R^{n+1}$) we have $\int P_{is} (D_s \varphi)  - (W_{rr} v_i + W_{ri} v_r) \varphi d\, V = 0$. Proposition \ref{Prop:odd_unique} implies that $-(W_{rr} v_i + W_{ri} v_r)$ is even in $v$. By Lemma \ref{lem:mean_curv} it must then the (even) lift of $\vec{H}\cdot \textbf{e}_i$. 

(b) For any $\psi\in C^1_c(U \times \R^{n+1})$ we test in (\ref{eq:oriented_curvature_varifold}) with $\phi=(1-v_p v_p) \psi$ and obtain $\int (1-v_p v_p) P_{is} (D_s \psi) + (1-v_p v_p) W_{ia} D^*_a \psi + 2 v_a W_{ia} \psi - (W_{rr} v_i + W_{ri} v_r) (1-v_p v_p) \psi d\, V = 0$. Note that $(1-v_p v_p)$ vanishes identically on $U\times \mathbb{S}^n$ (hence on $\text{spt}(V)$), therefore $\int  v_a W_{ia} \psi  d\, V = 0$. Since $\psi$ is arbitrary, we conclude.
\end{proof}

\begin{oss}
We will prove (c), that is, $W_{ij}=W_{ji}$ $V$-a.e.,~in Section \ref{geometric_coeff}. This, combined with (b), gives also that $W_{ri} v_r$ vanishes $V$-a.e. We note that the condition $W_{ri} v_r=0$ is equivalent to the fact that the generalised mean curvature  $\vec{H}$ of $\textbf{q}_\sharp V$ is a.e.~orthogonal to the approximate tangent to $\|\textbf{q}_\sharp V\|$ (which is known to be a.e.~true, \cite[Chapter 5]{Brak}). Indeed, using (a), we have $-(P \vec{H}) \cdot \textbf{e}_a=P_{ai}  (W_{rr} v_i + W_{ri} v_r) = W_{rr}(P_{ai}v_i) + W_{ri}\delta_{ai} v_r - W_{ri} v_a v_i v_r =  W_{ra} v_r$ (using (b) and $P_{ai}v_i=0$ $V$-a.e.~). 
\end{oss}

\subsection{Relation with curvature integral varifolds in \cite{Hut}}
\label{relation_with_Hut}

While not essential for the forthcoming sections, we compare here Definition \ref{Dfi:oriented_curvature_varifold} to the notion of curvature integral varifold in \cite{Hut} (see also \cite{Man}). The validity of (\ref{eq:oriented_curvature_varifold}) for $V$ with $W_{ia} \in L^1_{\text{loc}}(V)$ implies that $\textbf{q}_\sharp V$ is a curvature integral varifold in the sense of \cite{Hut}. Indeed, let $\varphi\in C^1_c(U \times \R^{(n+1)^2})$, with variables $P_{ij}$ in the second factor, $i, j \in \{1, \ldots, n+1\}$. Then $\phi(x,v) = \varphi(x, P(v))$ with $P_{ij}(v)=\delta_{ij}-v_i v_j$ is admissible in (\ref{eq:oriented_curvature_varifold}). The chain rule gives $D^*_\ell \phi = - D^*_{jp} \varphi \,(v_j \delta_{\ell p} + v_p \delta_{\ell j})$. We now substitute in (\ref{eq:oriented_curvature_varifold}):
$$\int \left((\delta_{ij}-v_i v_j) D_j \varphi - (W_{ip} v_j + W_{ij} v_p) D^*_{jp} \varphi - (W_{rr} v_i + W_{ri} v_r) \varphi\right) d\, V = 0,$$
where $\varphi$ and its derivatives are evaluated at $(x,P(v))$. As the function $-(W_{i p}v_j + W_{ij} v_p)$ is even in $v$ (by Proposition \ref{Prop:odd_unique}), it descends to a well-defined ($\textbf{q}_\sharp V$-a.e.) function in $U \times \R^{(n+1)^2}$ that we denote by $A_{ijp}$. By definition of push-forward measure then

\begin{equation}
\label{eq:from_orient_to_unorient}
\int \left(P_{ij} D_j \varphi + A_{ijp} D^*_{jp} \varphi + A_{rir} \varphi\right) d\, \textbf{q}_\sharp V =0. 
\end{equation}
The existence of real-valued functions $A_{ijk}$, for $i, j, k \in \{1, \ldots, n+1\}$, defined $\textbf{q}_\sharp V$-a.e.~(in $U \times \R^{(n+1)^2}$) such that this identity holds expresses the fact that $\textbf{q}_\sharp V$ is a curvature integral varifold in the sense of \cite{Hut}. Note that (as expected from \cite[Proposition 5.2.4]{Hut}) $A_{ijk}=A_{ikj}$ and (using Proposition \ref{Prop:further_prop_curv_coeff} (b)) $A_{ijj}=0$.

In the above situation ($V$ is an oriented integral varifold with curvature in the sense of Definition \ref{Dfi:oriented_curvature_varifold}) the curvature coefficients $W_{ia}$ are determined by the curvature coefficients $A_{ijk}$ of $\textbf{q}_\sharp V$ by the following relation (we also include the coefficients $B_{ij}^k$ of the generalised second fundamental form of $\textbf{q}_\sharp V$ as in \cite{Hut}):
$$W_{ia}(x,v)=-P_{ad}(v)A_{idp}(x,\textbf{q}(v)) v_p = - B_{ia}^p(x,\textbf{q}(v)) v_p.$$
To see this, we observe that $A_{ijp} v_p = -W_{i p}v_j v_p - W_{ij}$, hence $P_{aj} A_{ijp} v_p = -P_{aj} W_{ij}$. Moreover, $W_{ia} = \delta_{aj}W_{ij}=P_{aj}W_{ij} + W_{ij} v_j v_a=P_{aj}W_{ij}$ (using Proposition \ref{Prop:further_prop_curv_coeff} (b)).

\medskip

While the idea (illustrated in Section \ref{curv_oriented_var}) behind Definition \ref{Dfi:oriented_curvature_varifold} is the same as for the notion of curvature (unoriented) varifolds in \cite{Hut}, to avoid confusion we remark that in \cite{Hut} the notion of curvature varifolds $\mathcal{C}V_n(U)$ is essentially given only for (unoriented) integral varifolds. In the case of an oriented integral varifold $V \in IV^o_n(U)$, \cite{Hut} defines $V\in \mathcal{C}V^o_n(U)$ (oriented curvature varifolds) by the requirement that $\textbf{q}_\sharp V \in \mathcal{C}V_n(U)$. That is, in \cite{Hut} an oriented integral varifold is said to be a curvature varifold when the associated (unoriented) varifold is a curvature varifold. In Definition \ref{Dfi:oriented_curvature_varifold} instead we require the defining identity (\ref{eq:oriented_curvature_varifold}) directly on the oriented varifold $V$, in order to say that it has curvature; the functions $W_{ij}$ are defined $V$-a.e. (thus in the oriented Grassmannian bundle $U \times \mathbb{S}^{n}\subset U\times \R^{n+1}$). 
Our notion may a priori be more restrictive than membership to $\mathcal{C}V^o_n(U)$ in the sense of \cite{Hut}.
(The two notions coincide if $V$ is sufficiently regular. For example, if $V$ is associated to a $C^2$-embedded hypersurface $M$, the condition that $V$ is an oriented curvature varifold in the sense of Definition \ref{Dfi:oriented_curvature_varifold} is equivalent to $V \in \mathcal{C}V^o_n(U)$ in the sense of \cite{Hut}.)

In the forthcoming sections we will work with oriented integral varifolds associated to reduced boundaries (without any regularity assumption). We will use the terminology ``$V$ is an oriented varifold with curvature'', or ``the oriented varifold $V$ has curvature'', to mean the condition in Definition \ref{Dfi:oriented_curvature_varifold}, without reference to the class $\mathcal{C}V^o_n(U)$ of \cite{Hut}.

\section{Curvature coefficients and their geometric counterpart}
\label{geometric_coeff}

The aim of this section (see Proposition \ref{Prop:C2_coeff} below) is to relate the coefficients $W_{ia}$ appearing in (\ref{eq:oriented_curvature_varifold}), which are just functions on $V$, to ``geometric'' curvature coefficients associated to the weight measure $\|V\|$. We assume that $W_{ia}\in L^1_{\text{loc}}(V)$ for all $i,a$. As a byproduct, we will also establish Proposition \ref{Prop:odd_unique}.
 
\medskip

We begin by recalling that $\textbf{q}_\sharp V$ is an integral varifold with first variation represented by a function in $L^1_{\text{loc}}(\|\textbf{q}_\sharp (V)\|)$, as observed in Remark \ref{oss:L1_firstvar}. The weight measures of $V$ and $\textbf{q}_\sharp (V)$ are the same, $\|V\|=\|\textbf{q}_\sharp (V)\|$. We denote by $R$ the $n$-rectifiable set in $\R^{n+1}$ such that $V=(R, \xi, \theta_1, \theta_2)$, and $\textbf{q}_\sharp (V)=(R, \theta_1+\theta_2)$.

The result of Menne \cite{Men} (see also \cite{San}, which proves a weaker version that is sufficient for the proof of Theorem \ref{thm:main}, since an $L^\infty$ bound on the generalised mean curvatures is available) guarantees the $C^2$-rectifiability of $\|V\|$, namely there exists a collection $\{M_j\}_{j=1}^\infty$ of $C^2$-hypersurfaces in $U$ such that $\|V\|(U\setminus (\cup_{j=1}^\infty M_j))=0$. 

Upon possibly redefining the hypersurfaces $M_j$, we may assume that each $M_j$ admits a continuous choice of unit normal $\nu_{M_j}$.\footnote{This can be ensured by considering, for each $j$ of the initial family $\{M_j\}$, the hypersurfaces $M_j\cap B^{n+1}_r(x)$ with $x\in U$ having rational coordinates, and with $r\in \Q$ sufficiently small to ensure that $M_j\cap B^{n+1}_r(x)$ is orientable. There are only countably many choices for $x$ and $r$, so the claim follows.}
Consider, for each $j$, the lift of $M_j$ to $U \times \mathbb{S}^{n} \subset U\times \R^{n+1}$, that is the $C^1$ hypersurface $\tilde{M}_j=\{(x,v)\in U\times \R^{n+1}: |v|=1, x\in M_j, v=\pm \nu_{M_j}(x)\}$. (Note that if $M_j$ is connected then $\tilde{M}_j$ has two connected components.)

\begin{oss}
The measure $V$ is absolutely continuous with respect to $\mathcal{H}^n\res (\cup_{j=1}^\infty \tilde{M}_j)$. 
Assume that $A\subset U\times \R^{n+1}$ satisfies $\mathcal{H}^n(A\cap (\cup_{j=1}^\infty \tilde{M}_j))=0$. Denote by $\varpi: U\times \R^{n+1} \to U$ the standard projection and by $v^{\pm}(R)$ the two sets defined respectively by
$$v^{\pm}(R)=\{(x,v)\in U\times \R^{n+1}: x\in R, \text{ the approximate tangent $T_x R$ exists, } v=\pm \xi(x)\}.$$
Then by definition of $V$ we have 
$$V(A)=(\theta_1 \mathcal{H}^n)(\varpi(A\cap v^+(R))) + (\theta_2 \mathcal{H}^n)(\varpi(A\cap v^-(R))).$$
By ($C^1$-)rectifiability, for any $j$ we have that at $\|V\|$-a.e.~$x\in M_j$ the approximate tangent to $R$ at $x$ exists and agrees with $T_x M_j$, and $\xi(x)=\pm \nu_{M_j}(x)$. In other words, for every $j$ there exists $N_j\subset M_j$ such that $\|V\|(N_j)=0$ and for every $x\in M_j \setminus N_j$ we have that $x\in R$, the approximate tangent to $R$ at $x$ exists and $(x,\pm\xi(x))\in \tilde{M}_j$. Hence $v^{\pm}(R) \setminus (N\times \R^{n+1}) \subset (\cup_{j=1}^\infty \tilde{M}_j)$, where $N=N_0 \cup (\cup_{j=1}^\infty N_j)$ and $N_0=R\setminus (\cup_{j=1}^\infty M_j)$; note that $\|V\|(N)=0$. We then have that $\varpi(A\cap v^\pm(R)) \subset \varpi(A \cap (\cup_{j=1}^\infty \tilde{M}_j)) \cup N$. This inclusion gives $V(A) \leq ((\theta_1+\theta_2) \mathcal{H}^n) \varpi(A \cap (\cup_{j=1}^\infty \tilde{M}_j))$. Recalling that the projection decreases the $\mathcal{H}^n$-measure, $\mathcal{H}^n\left(\varpi(A\cap (\cup_{j=1}^\infty \tilde{M}_j)) \right)\leq \mathcal{H}^n\left(A\cap (\cup_{j=1}^\infty \tilde{M}_j)\right)=0$, we conclude $V(A)=0$.
\end{oss}

By the previous remark and by Radon-Nikodym's theorem there exists a function $\sigma \in L^1_{\text{loc}}\left(\mathcal{H}^n\res (\cup_{j=1}^\infty \tilde{M}_j)\right)$ such that $V= \sigma\, \mathcal{H}^n \res (\cup_{j=1}^\infty \tilde{M}_j)$. This is (by definition) a rectifiable measure. By the characterisation of rectifiability through approximate tangents (e.g.~\cite[Chapter 3]{SimonNotes}), for $V$-a.e.~$(x,v)$ the approximate tangent measure to $V$ at $(x,v)$ exists and is $\sigma(x,v)\mathcal{H}^n\res \Pi_{x,v}$ for an $n$-dimensional subspace $\Pi_{x,v}$ in $\R^{n+1} \times \R^{n+1}$. Explicitly, denoting by $\eta_{(x,v),r}(\cdot)$ the dilation $\frac{\cdot - (x,v)}{r}$, this means that for $r\to 0$ the measures $\frac{1}{r^n}(\eta_{(x,v),r})_\sharp V$ converge to $\sigma(x,v)\mathcal{H}^n\res \Pi_{x,v}$, that is, for every function $\phi \in C^0_c(\eta_{(x,v),r}(U)\times \R^{n+1})$, 
$$\int \frac{1}{r^n} \phi\, d({\eta_{(x,v)}}_\sharp V) \to \s(x,v)\int \phi  \,d\mathcal{H}^n\res \Pi_{x,v}\,\,\,\, \text {as } r\to 0.$$
The above also implies (choosing $\phi$ to be a smooth, radially symmetric non-negative bump function in $\R^{2n+2}$ such that its value on the unit ball is $1$, it is decreasing in the radial variable, and its support is the ball of radius $1+\delta$, with $\delta>0$ arbitrarily small) that $\liminf_{r\to 0} \frac{1}{r^n}V(B_{(1+\delta)r})\geq \s(x,v) \omega_n$ and $\limsup_{r\to 0} \frac{1}{r^n}V(B_{r})\leq \s(x,v) (1+\delta)^n \omega_n$, hence 
\begin{equation}
\label{eq:V_r^n}
 \frac{V(B_r(x,v))}{r^n}\to \s(x,v) \omega_n. 
\end{equation}

\medskip

We next check that for $V$-a.e.~$(x,v)$ the following holds (note that $V$-a.e.~$\sigma > 0$):

\begin{equation}
\label{eq:Leb_cont_modified}
\left|\int \frac{\chi_r}{\s(x,v)} W_{ia} d\,V - W_{ia}(x,v)\right| \to 0   \,\,\,\, \text {as } r\to 0,                                                                                                                                                                                        \end{equation}
where $\chi$ is an any smooth, radially symmetric bump function in $\R^{2n+2}$ such that its integral on any $n$-plane through the origin is equal to $1$ and we use the notation $\chi_r(\cdot)=\frac{1}{r^n} \chi\left(\frac{\cdot - (x,v)}{r}\right)$. 

\noindent 
Recall that $W_{ia}\in L^1_{\text{loc}}(V)$, therefore $V$-a.e.~point is of Lebesgue continuity for $W_{ia}$, that is, for $V$-a.e.~$(x,v)$
\begin{equation}
\label{eq:Leb_cont_W}
\frac{1}{V(B_r((x,v))}\int_{B_r((x,v))} |W_{ia}-W_{ia}(x,v)| \, dV \to 0.
\end{equation}
Then 
$$\int \frac{\chi_r}{\s(x,v)} W_{ia} d\,V - W_{ia}(x,v) = \int \frac{\chi_r}{\s(x,v)} W_{ia} d\,V -  W_{ia}(x,v)  \int_{\Pi_{x,v}} \chi d\mathcal{H}^n =$$ 
$$= \int \frac{\chi_r}{\s(x,v)} \left(W_{ia}-W_{ia}(x,v)\right) d\,V+W_{ia}(x,v)\left(\underbrace{\int  \frac{\chi_r}{\s(x,v)}  d\,V -  \int_{\Pi_{x,v}}  \chi d\mathcal{H}^n}_{\to 0 \text{ as } r\to 0}\right),$$
where the last term tends to $0$ by the characterisation of tangent measure above, used with $\chi$ in place of $\phi$ (and using the definition of push-forward measure). Denoting by $C>0$ the radius of a ball that contains the support of $\chi$ we also find
$$\left|\int \frac{\chi_r}{\s(x,v)} \left(W_{ia}-W_{ia}(x,v)\right) d\,V \right|\leq \frac{\|\chi\|_{L^\infty}}{\s(x,v)}\frac{C^n}{(Cr)^n}\int_{B_{Cr}((x,v))}|W_{ia}-W_{ia}(x,v)|d\,V.$$ 
As $\frac{1}{r^n}\int_{B_r((x,v))}|W_{ia}-W_{ia}(x,v)|d\,V=\frac{V(B_r((x,v))}{r^n}\frac{1}{V(B_r((x,v))}\int_{B_r((x,v))}|W_{ia}-W_{ia}(x,v)|d\,V$ tends to $0$. By (\ref{eq:V_r^n}) and (\ref{eq:Leb_cont_W}), we conclude (\ref{eq:Leb_cont_modified}).

\medskip

Next, we point out that, for every given $j$, for $V$-a.e.~$(x,v)\in\tilde{M}_j$ it holds:
\begin{equation}
\label{eq:char_Mj}
\frac{(\chi_{\tilde{M}_j}V)(B_r(x,v))}{r^n}=\frac{(\s\,\mathcal{H}^n \res \tilde{M}_j)(B_r(x,v))}{r^n}\to \s(x,v) \omega_n\,\,\,\text{ as } r\to 0. 
\end{equation}
Indeed $\chi_{\tilde{M}_j}$ is $V$-measurable and in $L^1_{\text{loc}}(V)$, so by the Lebesgue continuity property, for $V$-a.e.~$(x,v)$ it holds $\frac{1}{V(B_r((x,v))}\int_{B_r((x,v))}\chi_{\tilde{M}_j} dV \to \chi_{\tilde{M}_j}(x,v)$ as $r\to 0$. As observed in (\ref{eq:V_r^n}), $\frac{V(B_r((x,v))}{r^n}\to \om_n  \s(x,v)$ for $V$-a.e.~$(x,v)$, so (\ref{eq:char_Mj}) follows.

\noindent Similarly, for every given $j$, and for every $(i,a)$, for $V$-a.e.~$(x,v)\in\tilde{M}_j$ it holds:
\begin{equation}
\label{eq:char_Mj_W}
\frac{1}{ \s(x,v) \omega_n r^n}\int_{B_r(x,v)} |W_{ia}\,\chi_{\tilde{M}_j} - W_{ia}(x,v)| \,d\,V\to 0\,\,\,\text{ as } r\to 0. 
\end{equation}

\medskip

We are now ready to prove the main result of this section, from which Proposition \ref{Prop:odd_unique} follows immediately.

\begin{Prop}
\label{Prop:C2_coeff}
Let $V$ be an oriented integral varifold with curvature in $L^1_{\text{loc}}(V)$ (as in Definition \ref{Dfi:oriented_curvature_varifold})  and let $\{M_j\}_{j=1}^\infty$, $\{\tilde{M}_j\}_{j=1}^\infty$ be as above. For any $j$, for $V$-a.e.~$(x,v)\in \tilde{M}_j$ the curvature coefficients $W_{ia}$ appearing in (\ref{eq:oriented_curvature_varifold}) are given ($i, a \in \{1, \ldots, n+1\}$) by $W_{ia}((x,v))=\delta_i (\nu_{M_j})_a$ for $v=\nu_{M_j}(x)$ and $W_{ia}((x,v))=-\delta_i (\nu_{M_j})_a$ for $v=-\nu_{M_j}(x)$.
\end{Prop}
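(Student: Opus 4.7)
The strategy is a blow-up analysis around a typical point $(x_0,v_0)\in\tilde{M}_j$: from the curvature identity \eqref{eq:oriented_curvature_varifold} I extract a ``flat'' identity on the tangent plane $\Pi:=T_{(x_0,v_0)}\tilde M_j$, and compare it with the analogous flat identity obtained by viewing $\tilde M_j$ itself as an oriented curvature varifold of multiplicity one. Matching the two identities will force $W_{ia}(x_0,v_0)$ to equal the geometric coefficient $\pm\delta_i(\nu_{M_j})_a(x_0)$.

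I fix $(x_0,v_0)\in\tilde{M}_j$ where simultaneously: (i) the approximate tangent measure to $V$ exists and equals $\sigma(x_0,v_0)\mathcal{H}^n\res\Pi$ with $\sigma(x_0,v_0)>0$; (ii) each $W_{ia}$ satisfies the Lebesgue continuity property \eqref{eq:Leb_cont_W} at $(x_0,v_0)$; and (iii) \eqref{eq:V_r^n}--\eqref{eq:char_Mj_W} hold. All these conditions are met at $V$-a.e.~point of $\tilde M_j$ by the preliminary discussion. For arbitrary $\phi\in C^1_c(\R^{n+1}\times\R^{n+1})$ and small $r>0$, I insert $\phi^{(r)}(y,w):=\phi((y-x_0)/r,(w-v_0)/r)$ into \eqref{eq:oriented_curvature_varifold}, multiply by $r^{1-n}$, and let $r\to 0$. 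The combination of the tangent-measure convergence $r^{-n}(\eta_{(x_0,v_0),r})_\sharp V\rightharpoonup\sigma(x_0,v_0)\mathcal{H}^n\res\Pi$, the uniform convergence $(\delta_{ij}-w_iw_j)\to(\delta_{ij}-v_{0,i}v_{0,j})$ on the shrinking supports (since $w=v_0+O(r)$), the vanishing of the mean-curvature term thanks to the extra factor $r$ from the rescaling together with the $L^1_{\text{loc}}$ bound on $W$, and the Lebesgue-continuity estimate $r^{-n}\int|W_{ia}(y,w)-W_{ia}(x_0,v_0)|\,|\phi^{(r)}|\,dV\to 0$ (controlled using \eqref{eq:V_r^n} and \eqref{eq:Leb_cont_W}) yields, after dividing by $\sigma(x_0,v_0)>0$, the flat identity
\begin{equation*}
\int_\Pi \bigl[(\delta_{ij}-v_{0,i}v_{0,j})D_j\phi + W_{ia}(x_0,v_0)D^*_a\phi\bigr]\,d\mathcal{H}^n=0,
\end{equation*}
valid for every $\phi\in C^1_c(\R^{n+1}\times\R^{n+1})$.

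Next, I perform the identical blow-up procedure on the canonical oriented integral varifold $(M_j,\nu_{M_j},1,0)$ if $v_0=+\nu_{M_j}(x_0)$, or $(M_j,\nu_{M_j},0,1)$ if $v_0=-\nu_{M_j}(x_0)$. By the Claim of Section~\ref{curv_oriented_var} this varifold satisfies \eqref{eq:oriented_curvature_varifold} with the continuous coefficients $\pm\delta_i(\nu_{M_j})_a$, its tangent plane at $(x_0,v_0)$ is again $\Pi$, and the Lebesgue-continuity step is trivial; the same blow-up therefore delivers
\begin{equation*}
\int_\Pi \bigl[(\delta_{ij}-v_{0,i}v_{0,j})D_j\phi \pm \delta_i(\nu_{M_j})_a(x_0)D^*_a\phi\bigr]\,d\mathcal{H}^n=0.
\end{equation*}
Subtracting the two flat identities, the $D_j\phi$ contribution cancels and I am left with $\int_\Pi w_{ia}D^*_a\phi\,d\mathcal{H}^n=0$ for all admissible $\phi$, where $w_{ia}:=W_{ia}(x_0,v_0)\mp\delta_i(\nu_{M_j})_a(x_0)$ (sign matching $v_0$). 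For any fixed index $a_0$, I test with $\phi(y,w):=w_{a_0}\eta(y)\chi(w)$, where $\eta\in C^1_c(\R^{n+1})$ and $\chi\in C^1_c(\R^{n+1})$ is a cutoff identically equal to $1$ on the bounded region where $\Pi$ meets $\operatorname{supp}\eta\times\R^{n+1}$. Then $D^*_a\phi=\delta_{a_0 a}\eta$ on $\Pi$, and the identity collapses to $w_{ia_0}\int_\Pi\eta\,d\mathcal{H}^n=0$. Since the projection of $\Pi$ onto the first $(n+1)$ coordinates is the $n$-plane $T_{x_0}M_j$, choosing $\eta\geq 0$ with $\int_{T_{x_0}M_j}\eta>0$ forces $w_{ia_0}=0$ for every $a_0$, which is the claim.

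The principal obstacle is the Lebesgue-continuity step in the blow-up, i.e.~passing the $W_{ia}$ term through the limit: one must combine the rectifiability-based tangent-measure convergence with the merely $L^1_{\text{loc}}$ regularity of $W_{ia}$ to obtain convergence of $r^{-n}\int W_{ia}\phi^{(r)}\,dV$ to $W_{ia}(x_0,v_0)\sigma(x_0,v_0)\int_\Pi\phi\,d\mathcal{H}^n$. This is resolved by splitting the integrand into its constant Lebesgue value plus an oscillatory correction and controlling the latter via \eqref{eq:V_r^n} and \eqref{eq:Leb_cont_W}; the remaining pieces of the argument are then routine.
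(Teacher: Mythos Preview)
Your argument is correct and leads to the same conclusion, but the route differs from the paper's. The paper does not pass to a ``flat'' limiting identity; instead it builds the geometric answer directly into the test function. Concretely, with $f$ a local $C^2$ defining function for $M_j$ satisfying $\nabla f/|\nabla f|=\nu_{M_j}$, the paper plugs $\psi\,\beta\,\phi_a$ into \eqref{eq:oriented_curvature_varifold}, where $\phi_a(x,v)=v_a-D_a f/|\nabla f|$ vanishes on the relevant sheet of $\tilde M_j$, $\psi=\psi(v)$ cuts off the opposite sheet, and $\beta=\chi_r/\sigma(x_0,v_0)$ is a rescaled bump. Letting $r\to 0$, the terms containing $\phi_a$ are killed using \eqref{eq:char_Mj}--\eqref{eq:char_Mj_W} (since $\phi_a=0$ on $\tilde M_j$ and $V$ concentrates on $\tilde M_j$ at rate $r^n$), the $D^*_\ell$-term produces exactly $W_{ia}(x_0,v_0)$, and the $D_s$-term produces the geometric quantity $(\delta_{is}-v_{0,i}v_{0,s})D_s(D_a f/|\nabla f|)(x_0)=\delta_i(\nu_{M_j})_a(x_0)$. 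Your blow-up instead reduces both sides to a constant-coefficient identity on the tangent plane $\Pi$ and then subtracts, which is slightly more conceptual and has the advantage of not needing the sheet cutoff $\psi$ or the estimates \eqref{eq:char_Mj}--\eqref{eq:char_Mj_W} (only \eqref{eq:V_r^n} and \eqref{eq:Leb_cont_W} are really used); the price is a second blow-up and the final linear-algebra step showing $\int_\Pi w_{ia}D^*_a\phi=0$ forces $w_{ia}=0$. Both approaches hinge on the same Lebesgue-continuity control of the $W_{ia}$ term that you correctly identify as the main point.
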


\begin{oss}
 \label{oss:well_posed_curv_coeff}
As shown earlier, $V$-a.e.~point is in at least one of the $\tilde{M}_j$'s, so the conclusion covers indeed $V$-a.e.~point. It also follows from the proof that the conclusion is $V$-a.e.~well-posed (even though a point may belong to more than one of the $\tilde{M}_j$'s).
\end{oss}

\begin{proof}
We are going to prove the statement at an arbitrary $(x_0,v_0)\in \tilde{M}_j$ such that $T_{x_0} R$ exists, $T_{x_0} M_j= T_{x_0} R$, $\pm v_0$ is orthogonal to $T_{x_0} M_j$ at $x_0$, and such that (\ref{eq:V_r^n}), (\ref{eq:Leb_cont_modified}), (\ref{eq:Leb_cont_W}), (\ref{eq:char_Mj}) and (\ref{eq:char_Mj_W}) hold with $x_0$, $v_0$ in place respectively of $x$, $v$. These conditions are all verified at $V$-a.e.~point in $\tilde{M}_j$, as checked above.

Since we have either $v_0=\nu_{M_j}(x_0)$ or $v_0=-\nu_{M_j}(x_0)$, we begin with the first of these two instances. Let $f$ be a $C^2$ function defined in a neighbourhood $O\subset U$ of $x_0$, chosen such that $M_j\cap O=\{f=0\}$, $\nabla f \neq 0$ in $O$ and $\frac{\nabla f}{|\nabla f|}=\nu_{M_j}$. Define $\phi_a:O\times \R^{n+1}\to \R$ by
$$\phi_a(x,v) = v_a - \frac{D_a f}{|\nabla f|}(x).$$
Let $\tilde{\psi}:\R^{n+1}\to [0,1]$ be a smooth function (of $v$) that is identically $1$ in an open neighbourhood of the set $\{v: v=\nu_{M_j}(p),\, p\in M_j\cap O\}$ and identically $0$ in an open neighbourhood of the set $\{v: v=-\nu_{M_j}(p),\, p\in M_j \cap O\}$. The fact that these two sets are a positive distance apart can be ensured by taking $O$ sufficently small. We define $\psi(x,v)=\tilde{\psi}(v)$ on $O\times \R^{n+1}$. We then check that
\begin{equation}
\label{eq:vanishing_integral_f}
\frac{1}{r^{n+1}}\int_{B_r((x_0,v_0))}|\psi \phi_a| d\,V \to 0 \text{ as }r\to 0.
\end{equation} 
Indeed, $\psi \phi_a$ vanishes on $\tilde{M}_j$ by construction ($\phi_a$ vanishes on $\tilde{M}_j\cap \{\psi\neq 0\}$). Moreover (by $C^1$-regularity of $\psi \phi_a$), there exists $C>0$ such that $|\psi \phi_a|\leq C r$ on $B_r((x_0,v_0))$ for all sufficently small $r$. Therefore (\ref{eq:vanishing_integral_f}) follows from the fact that $\lim_{r\to 0}\frac{1}{r^{n}} V(B_r((x_0,v_0)) \setminus \tilde{M}_j) =0$, which is given by the assumed validity of (\ref{eq:V_r^n}) and (\ref{eq:char_Mj}) at $(x_0, v_0)$. Similarly, we have 
\begin{equation}
\label{eq:vanishing_integral_f_2}
\frac{1}{r^{n+1}}\int_{B_r((x_0,v_0))}|D_\ell^* \psi|\, |\phi_a| d\,V \to 0 \text{ as }r\to 0\,\, \text{ for every }\ell.
\end{equation}

Testing (\ref{eq:oriented_curvature_varifold}) with $\psi  \beta \phi_a$ (in place of $\varphi$), for an arbitrary $\beta\in C^1_c(O\times \R^{n+1})$, we find

\begin{eqnarray*}
\int  (\delta_{is}-v_i v_s) \beta \psi \,\frac{\p}{\p x_s} \left(\frac{-D_a f}{|\nabla f|}\right) \,d\,V+ \int (\delta_{is}-v_i v_s)\,(D_s\beta) \psi\,\phi_a \,d\,V + \int W_{i a}\,\beta \psi\,d\,V\\
+\int W_{i\ell}\phi_a D^*_\ell (\beta \psi) \,d\,V- \int \beta \psi \phi_a (W_{rr} v_i + W_{ri} v_r)d\,V=0.
\end{eqnarray*}
We let $\beta=\frac{\chi_r}{\sigma(x_0,v_0)}$ in the above identity, where $\chi$ and $\chi_r$ are as described for (\ref{eq:Leb_cont_modified}), and analyse the five terms. Since $|D \beta|\leq \frac{K}{r^{n+1}}$ (for a constant $K$ depending on $(x_0,v_0)$), by (\ref{eq:vanishing_integral_f}) and (\ref{eq:vanishing_integral_f_2}) the second term tends to $0$ as $r\to 0$. The fourth term, as the integrand vanishes on $\tilde{M}_j$, can be replaced by $\int  W_{i\ell}\phi_a D^*_\ell (\beta \psi) (1-\chi_{\tilde{M}_j})\,d\,V$, which is bounded in modulus by $\frac{C}{r^n}\int_{B_r((x_0,v_0))} |W_{i\ell}| (1-\chi_{\tilde{M}_j})\,d\,V$, which tends to $0$ as $r\to 0$ by (\ref{eq:V_r^n}), (\ref{eq:Leb_cont_W}), (\ref{eq:char_Mj_W}). The fifth term is bounded in modulus by $\frac{Cr}{r^n}\int |W_{rr} v_i + W_{ri} v_r|d\,V$, which also tends to $0$ as $r\to 0$ by (\ref{eq:V_r^n}), (\ref{eq:Leb_cont_W}).
Since $\psi\equiv 1$ on the support of $\beta$ for all sufficiently small $r$, we obtain that the third term tends to $W_{ia}(x_0,v_0)$ by (\ref{eq:Leb_cont_modified}). The first term tends to $(\delta_{is}-(v_0)_i (v_0)_s) \frac{\p}{\p x_s} \left(\frac{D_a f}{|\nabla f|}\right)(x_0)$ (because $(x_0, v_0)$ is a point of continuity and thus of Lebesgue continuity of $(\delta_{is}-v_i v_s) \frac{\p}{\p x_s} \left(\frac{D_a f}{|\nabla f|}\right)(x)$).
In conclusion

\begin{equation}
\label{eq:coefficient_correct_1}
 W_{ia}(x_0,v_0) = (\delta_{is}-(v_0)_i (v_0)_s) \frac{\p}{\p x_s} \left(\frac{D_a f}{|\nabla f|}\right)(x_0).
\end{equation}
The second instance, that is the case in which $(x_0,v_0)$ satisfies $v_0=-\nu_{M_j}(x_0)$, is addressed in the same way, noting that $-f$ can be used in place of $f$, and it leads to
\begin{equation}
\label{eq:coefficient_correct_2}
W_{ia}(x_0,v_0) = -(\delta_{is}-(v_0)_i (v_0)_s) \frac{\p}{\p x_s} \left(\frac{D_a f}{|\nabla f|}\right)(x_0).
\end{equation}
In other words the value of $W_{ia}$ agrees with the curvature coefficient of $M_j$ computed with respect to $\nu_{M_j}$ if $v=\nu_{M_j}$ and with respect to $-\nu_{M_j}$ if $v=-\nu_{M_j}$. 
\end{proof}

\begin{oss}
Given functions $W_{ia}\in L^1_{\text{loc}}(V)$ for which (\ref{eq:oriented_curvature_varifold}) holds, $W_{ia}$ have a well-defined Lebesgue value $V$-a.e., and this is the value $W_{ia}(x_0, v_0)$ for which we have the validity of (\ref{eq:Leb_cont_modified}) in the proof just given, regardless of the chosen $j$. Therefore the relations (\ref{eq:coefficient_correct_1}) and (\ref{eq:coefficient_correct_2}) are valid with the same left-hand-side regardless of $j$. In particular this guarantees that, except possibly for a set of vanishing $V$-measure, if $(x,v) \in \tilde{M}_j \cap \tilde{M}_k$ then the curvature coefficients of $M_j$ and $M_k$ are the same at $x$. This confirms Remark \ref{oss:well_posed_curv_coeff} and is not surprising: $V$ is a rectifiable measure and so $V$-a.e.~the approximate tangent $\Pi_{(x,v)}$ exists and agrees with the approximate tangent to $T_{(x,v)} \tilde{M}_j$ independently of $j$: the tangent $T_{(x_0,v_0)} \tilde{M}_j$ determines the right-hand-sides of (\ref{eq:coefficient_correct_1}) and (\ref{eq:coefficient_correct_2}).
\end{oss}

\begin{proof}[proof of Proposition \ref{Prop:further_prop_curv_coeff} (c)]
For each $C^2$ hypersurface $M$ oriented by a unit normal $\nu_M$, we have by direct computation $\delta_i (\nu_{M} \cdot \textbf{e}_a) =- (\nabla_{\textbf{e}_i^T} \textbf{e}_{a}^T)^N = -B(\textbf{e}_i^T, \textbf{e}_{a}^T)$, where the tangential and normal components of a vector are denoted respectively using upper indeces $T$ and $N$, and $B$ is the classical second fundamental form, whose symmetry gives $\delta_i (\nu_{M} \cdot \textbf{e}_a)=\delta_a (\nu_{M} \cdot \textbf{e}_i)$. Then the conclusion follows from Proposition \ref{Prop:C2_coeff}.
\end{proof}

\section{Proof of Theorems \ref{thm:C^2_case} and \ref{thm:main}}
\label{proofs}

For each $\ell$ we have the validity of (\ref{eq:oriented_curvature_varifold}), with $V^\ell=(\p^* E_\ell, \nu_\ell, 1, 0)$ in place of $V$ and $W^\ell_{ia}$ in place of $W_{ia}$ (as assumed in Theorem \ref{thm:main}):

\begin{equation}
\label{eq:cuvature_identity_Vell}
\int \left((\delta_{is}-v_i v_s) \,D_s \phi + W^\ell_{i a}\, D^*_{a} \phi  - \phi (W^\ell_{rr} v_i + W^\ell_{ri} v_r)\right)d\,V^\ell=0.
\end{equation}

\begin{oss}
Note that Theorem \ref{thm:C^2_case} is indeed a special case of Theorem \ref{thm:main}. This follows from the argument in Section \ref{curv_oriented_var}, which shows that $V^\ell=(\p^* E_\ell, \nu_\ell, 1, 0)$ is an oriented integral varifold with curvature, with curvature coefficients $W^\ell_{ia}=\delta_i (\nu_{E_\ell})_a$. By direct computation, $\delta_i (\nu_{E_\ell} \cdot \textbf{e}_a) =- (\nabla_{\textbf{e}_i^T} \textbf{e}_{a}^T)^N$, where $T$ and $N$ stand respectively for tangential and normal components of a vector with respect to $\p E_{\ell}$. Therefore $\int \sum_{i,j} |W^\ell_{ij}|^q d V^\ell\leq \int_{\p E_\ell} |B^\ell|^q d\mathcal{H}^n$, where $B^{\ell}$ is the (classical) second fundamental form of $\p E_{\ell}$.
\end{oss}

Our first observation is that the class of oriented $n$-varifolds in $\Om\subset \R^{n+1}$ with curvature in $L^q$ is compact (in the oriented varifold topology) under locally uniform bounds on the masses and on the $L^q$-norms of the curvature coefficients, for $q>1$. This can be deduced as immediate consequence of the (more general) convergence of measure-function pairs in \cite{Hut} (using \cite[Proposition 4.2.4 and Proposition 4.4.2 (ii)]{Hut}). We prove it here directly in terms of convergence of (vector-valued) measures. (Here we work on the sequence $V^\ell$ under consideration; the general statement of this compactness property will be given in Proposition \ref{Prop:compactness_curvature_bounds_Riem}.)

\medskip

Consider the sequence of vector-valued measures $\vec{\mu}_\ell=\vec{W}^\ell V^\ell$, where $\vec{W}^\ell$ is the vector with $(n+1)^2$ entries $\{W^ \ell_{ia}\}$ for $(a,i)\in \{1, \ldots , n+1\} \times \{1, \ldots , n+1\}$ (for a fixed order on this set of indeces). By assumption, $\int_{U\times \R^{n+1}} |W_{ia}^\ell|^q d V^{\ell} \leq K$ for all $\ell$ and for all $i,a$, for the given $q>1$. 
Then the measures $\vec{\mu}_\ell$ have locally uniformly bounded total variations, $|\vec{\mu}_\ell|(U\times \R^{n+1})=\int_{U\times \R^{n+1}} |\vec{W}^\ell| dV^\ell \leq K^{1/q} C^{1/q'}$, where $C$ is the upper bound for the perimeters of $E_\ell$ in $U$ (provided by (\ref{eq:bounds_assumptions_continuous})) and $q'$ is the conjugate of $q$. By standard weak* compactness for Radon measures we can extract a (subsequential) limit $\vec{\mu}$. (We will use below the fact that the total variation is lower semi-continuous.) Extracting possibly a further subsequence, we also have the existence of an oriented integral varifold $V$ to which $V^{\ell}$ converge, thanks to the compactness theorem of Hutchinson, recalled in Section \ref{oriented_var}. Let $A\subset U\times \R^{n+1}$ be open and bounded; for $t>0$ let $A_t$ be the set of points in $A$ at distance $\geq t$ from $\p A$; let $u\in C^0_c(A)$ with $u\equiv 1$ on $A_t$ and $u\geq 0$. Then:

$$|\vec{\mu}|(A_t)\leq \liminf_{\ell\to \infty} |\vec{\mu_\ell}|(A_t) = \liminf_{\ell\to \infty}\int_{A_t} |\vec{W}^\ell| dV^\ell\leq  \liminf_{\ell\to \infty}\int_{U\times \R^{n+1}} u |\vec{W}^\ell| dV^\ell\leq$$ 
$$\liminf_{\ell\to \infty} \left(\int_{U\times \R^{n+1}} |\vec{W}^\ell|^q dV^\ell\right)^{1/q} \left(\int  u^{q'}dV^\ell\right)^{1/q'}\leq K^{1/q} \liminf_{\ell\to \infty} \left(\int u^{q'}dV^\ell\right)^{1/q'}$$
$$=K^{1/q} \left(\int u^{q'}dV \right)^{1/q'} \leq K^{1/q}\, V(A)^{1/q'}. $$
Then (since these are Radon measures) we get $|\vec{\mu}|(A)\leq K^{1/q}\, V(A)^{1/q'}$ for any Borel set $A$. In particular, $\vec{\mu}$ is absolutely continuous with respect to $V$ and is thus of the type $\vec{W} V$, with $\vec{W}\in L^1_{\text{loc}}(V)$. We will denote by $W_{ia}$ the components of $\vec{W}$.

The convergence $\vec{W}^\ell V^\ell \rightharpoonup \vec{W} V$ (weak* as vector-valued Radon measures) is equivalent to convergence as measure-function pairs of $(W^\ell_{i a}, V^\ell)$ to $(W_{ia}, V)$, in the sense of \cite[Proposition 4.2.4]{Hut}. Both notions amount to the fact that for every $(i,a)$
$$\int W_{ia}^{\ell} \phi dV^{\ell} \to \int W_{ia} \phi dV \text{ for every }\phi \in C^0_c(U\times \R^{n+1}).$$
Thus, for every $\phi\in C^1_c(U\times \R^{n+1})$, passing (\ref{eq:cuvature_identity_Vell}) to the limit, we obtain

\begin{equation}
\label{eq:cuvature_identity_limit}
\int \left((\delta_{is}-v_i v_s) \,D_s \phi + W_{i a}\, D^*_{a} \phi  - \phi (W_{rr} v_i + W_{ri} v_r)\right)d\,V=0.
\end{equation}
The oriented integral varifold $V$ thus has curvature in the sense of Definition \ref{Dfi:oriented_curvature_varifold}, with curvature coefficients $W_{ia} \in L^1_{\text{loc}}(V)$. In fact, we have $W_{ia} \in L^q_{\text{loc}}(V)$. (For the rest of our arguments in this section, we will only require that $W_{ia} \in L^1_{\text{loc}}(V)$.) Being convex, $y\in \R^{(n+1)^2} \to |y|^q$ is a supremum of countably many affine functions, $|y|^q = \sup_{m\in \N} (a_m + \vec{b}_m\cdot y)$ for a suitable choice of $a_m \in \R$, $\vec{b}_m\in \R^{(n+1)^2}$. Then we have, for each $m$, and for any $\psi\in C^0_c(U \times \R^{n+1})$, 
$$\int (a_m + \vec{b}_m \cdot \vec{W}) \psi \, dV =\lim_{\ell \to \infty}\int (a_m + \vec{b}_m \cdot \vec{W^\ell}) \psi \, dV^\ell \leq \liminf_{\ell \to \infty}\int |\vec{W}^\ell|^q \psi dV^\ell.$$ One can verify (this holds whenever a function is defined as a supremum) that $\int_A |\vec{W}|^q dV = \sup\left\{\sum_{m\in \N} \int (a_m + \vec{b}_m \cdot \vec{W})\phi_m \,dV\right\}$, where the supremum is taken over all possible choices of $\{\phi_m\}_{m\in\N}$ with $\phi_m\in C^0_c(A)$, $0\leq \phi_m\leq 1$, with $\text{spt}(\phi_m)$ pairwise disjoint and $A$ having compact closure. It then follows that $\int_A |\vec{W}|^q dV \leq \liminf_{\ell \to \infty}\int_A |\vec{W}^\ell|^q dV^\ell$.

\medskip

\begin{proof}[proof of Theorem \ref{thm:main}]
From Proposition \ref{Prop:further_prop_curv_coeff} (a), $-(W^\ell_{rr} v_i + W^\ell_{ri} v_r)$ in (\ref{eq:cuvature_identity_Vell}) is the lift ($V^{\ell}$-a.e.) of $\vec{H}^\ell\cdot \textbf{e}_i$, where $\vec{H}^\ell$ is the generalised mean curvature of $|\p^* E_\ell|$, which is $g_\ell \nu_\ell$ by assumption. Hence $V^{\ell}$-a.e.~$(x,v)$ we have $-(W^\ell_{rr} v_i + W^\ell_{ri} v_r)(x,v) = g_{\ell}(x) v_i$, the $i$-th component of $\vec{\mathscr{g}}_\ell(x,v)$. (Recall that $V^{\ell}=(\p^* E_\ell, \nu_{E_\ell}, 1,0)$, so the measurable choice of  orientation is $\nu_{E_\ell}$.) Then (\ref{eq:cuvature_identity_Vell}) can be equivalently written as
$$\int  \left((\delta_{is}-v_i v_s) \,D_s \phi + W^\ell_{i a}\, D^*_{a} \phi + \phi g_\ell v_i \right)d\,V^\ell=0.$$
Passing to the limit, since $g_\ell v_i \to gv_i$ in $C^0(U\times \R^{n+1})$ (and the total masses of $V^\ell$ are locally uniformly bounded), we obtain
\begin{equation}
 \label{eq:cuvature_identity_limit_with_g}
 \int  \left((\delta_{is}-v_i v_s) \,D_s \phi + W_{i a}\, D^*_{a} \phi + \phi g v_i\right)d\,V=0.
\end{equation}

Comparing (\ref{eq:cuvature_identity_limit_with_g}) and (\ref{eq:cuvature_identity_limit}), the arbitraryness of  $\phi\in C^1_c(U\times \R^{n+1})$ implies that $V$-a.e.~it holds $-(W_{rr} v_i + W_{ri} v_r) = g v_i$, the $i$-th component of $\vec{\mathscr{g}}(x,v)$.  Again by Proposition \ref{Prop:further_prop_curv_coeff} (a), $-(W_{rr} v_i + W_{ri} v_r)$ is the lift of $\vec{H}\cdot \textbf{e}_i$, where $\vec{H}$ is the generalised mean curvature of $\|V\|$. Hence $\vec{\mathscr{g}}(x,v)=g(x)\vec{v}=\vec{H}(x)$ (that is, $\vec{\mathscr{g}}$ descends to $\vec{H}$ and, in particular, the prescribed mean curvature feature holds for $V$). 

\medskip

Since $\textbf{c}(V^\ell)=\p\llbracket E_\ell \rrbracket$ and $\p\llbracket E_\ell \rrbracket \to \p \llbracket E\rrbracket$ (as currents), we conclude that $\textbf{c}(V)=\p \llbracket E \rrbracket$ and therefore $V=(\p^* E, \nu_E, 1, 0)+(\mathcal{R}, \textbf{n}, \theta, \theta)$, for an $n$-rectifiable set $\mathcal{R}$ and $\theta:\mathcal{R}\to \N\setminus \{0\}$ in $L^1_{\text{loc}}(\mathcal{H}^n\res \mathcal{R})$, and $\textbf{n}$ a measurable choice of unit normal on $\mathcal{R}$. Defining $R=\p^* E \cup \mathcal{R}$ we then write $V=(R, \xi, \theta_1, \theta_2)$, where $\xi$ is the measurable choice of orientation on $R$ taken to agree with $\nu_E$ on $\p^*E$ and with $\textbf{n}$ on $\mathcal{R}\setminus \p^* E$, and $(\theta_1, \theta_2)=(\theta+1,\theta)$ on $\p^* E \cap \mathcal{R}$, $(\theta_1, \theta_2)=(1,0)$ on $\p^* E \setminus \mathcal{R}$, $(\theta_1, \theta_2)=(\theta,\theta)$ on $\mathcal{R}\setminus \p^* E$.

By Lemma \ref{lem:mean_curv} (which can be applied since we can test in (\ref{eq:cuvature_identity_limit_with_g}) with $\phi$ independent of $v$) we find $\vec{H}(x)=\frac{\theta_1(x) g(x)\xi(x)-\theta_2(x) g(x)\xi(x)}{\theta_1(x)+\theta_2(x)}$, so that necessarily we must have $(\theta_1(x), \theta_2(x))=(1,0)$ for $\|V\|$-a.e.~$x\in \{g(x) \neq 0\}$ . In particular, possibly redefining $\mathcal{R}$ by removing a set of $\mathcal{H}^n$-measure $0$, $\mathcal{R}\subset \{g=0\}$.
Theorems \ref{thm:C^2_case} and \ref{thm:main} are proved.
 
\end{proof}

\begin{proof}[proof of Corollary \ref{cor:boundaries}]
We note that we can use $\{g=0\}$ as one of the $M_j$'s that give the $C^2$-rectifiability property for the rectifiable set $R$ such that $V=(R, \xi, \theta_1, \theta_2)$ (see Section \ref{geometric_coeff}). More precisely, given the cover $\{M_j\}_{j=1}^\infty$ as in Section \ref{geometric_coeff}, we can redefine $C_{1} = \{g=0\}$, $C_j = M_{j-1} \setminus \{g=0\}$ for $j\geq 2$ to yield a new countable $C^2$-cover $\{C_j\}_{j=1}^\infty$. We can then work with the new cover to obtain (fixing $j=1$) that $V$-a.e.~$(x,v) \in \tilde{C}_{1}$ the curvature coefficients $W_{ia}(x,v)$ of $V$ agree with the ``geometric'' curvature coefficients of $C_{1}$ and, in particular, the generalised mean curvature of $\textbf{q}_\sharp V$ agrees with $h_S$ almost everywhere on $S=\{g=0\}$. On the other hand, by Theorem \ref{thm:main}, almost everywhere on $\mathcal{R}$ we have that the generalised mean curvature of $\textbf{q}_\sharp V$ vanishes and that $\mathcal{R}\subset S$. Therefore $h_S = 0$ almost everywhere on $\mathcal{R}$. If $\mathcal{H}^n(\mathcal{R})>0$, there is a $\mathcal{H}^n$-measurable set $Z \subset S$ with $\mathcal{H}^n(Z)>0$ and such that $h_S=0$ on $Z$. Then there would be a point (of Lebesgue density $1$ for $Z$) where $h_S$ vanishes at infinite order, contradiction. So $\mathcal{H}^n(\mathcal{R})=0$, that is, $V=(\p^* E, \nu_E, 1, 0)$.
\end{proof}

\section{Riemannian setting and more general hypersurfaces}
\label{Riemannian}

Assume that $\Om$ is an oriented Riemannian manifold of dimension $n+1$ (as we are addressing a local problem, orientability can always be assumed without loss of generality). We will embed $\Om$ isometrically in and open set $U\subset \R^N$, for $N$ sufficiently large, and denote by $S(x)$ the matrix of orthogonal projection from $\R^N$ onto $T_x \Om\subset \R^N$. 
Following \cite{Hut}, $V$ is an oriented integral $n$-varifold in $\Om$ if it is an oriented integral $n$-varifold in $U$ and $\spt{V}\subset \Om$. This means that $V=(R, \vec{N}, \theta_1, \theta_2)$ for an $n$-rectifiable set $R\subset U$, a measurable choice of orientation $\vec{N}(x)$, with $\vec{N}(x)$ a unit simple $(N-n)$-vector whose span is orthogonal to $T_x R$ in $\R^N$, and two locally summable integer-valued functions $\theta_1, \theta_2$; moreover we have imposed $R\subset \Om$. Recall that, in particular, $V$ is a measure in $U\times G^o(n,N)$, where the second factor is the Grassmannian of oriented $n$-planes in $\R^N$, which naturally embeds in $\Lambda^n(\R^N) \equiv \R^{N \choose {n}}$.

To exploit the codimension-$1$ property of $V$ in $\Om$, we note that $\vec{N}$ determines uniquely a measurable function $\xi: R \to \mathbb{S}\Om$, where $ \mathbb{S}\Om$ is the unit sphere bundle of $\Om$, by the relation $\xi \wedge \vec{N}_{\Om} = \vec{N}$, where $\vec{N}_{\Om}$ is the smooth determination of unit simple $(N-(n+1))$-vector that orients the normal bundle of $\Om$ in $\R^N$. Note that $\mathbb{S}\Om \subset T\Om$ is realised as a smooth subbundle of $\Om \times \R^N$, with fiber at $x\in \Om$ given by $\mathbb{S}^{N-1} \cap T_x \Om$ (an $n$-sphere). This allows us to treat $V$ as a Radon measure in $U \times \R^N$, whose defining action on $\varphi\in C^0_c(U \times \R^N)$ is 
$$V(\varphi)=\int_R \left(\theta_1(x) \varphi(x, \xi(x)) + \theta_2(x) \varphi(x, -\xi(x))\right) d\mathcal{H}^n(x).$$
In fact, $V$ is supported in $\mathbb{S}\Om$. We write $V=(R, \xi, \theta_1, \theta_2)$.

\begin{Dfi}
 \label{Dfi:Riemannin_oriented_curvature}
We say that the oriented integral $n$-varifold $V=(R, \xi, \theta_1, \theta_2)$ in $\Om$ has curvature in $L^q_{\text{loc}}$ if there exist functions $W_{ia} \in L^q_{\text{loc}}(V)$, for $i,a \in \{1, \ldots, N\}$ such that the following identity holds for all $\varphi\in C^1_c(U \times \R^N)$:

$$\int \left((\delta_{sk} - v_s v_k) S_{kb} (D_s \varphi) + (D^*_a \varphi) W_{ba} +  (\delta_{ik} - v_i v_k) S_{kr} \overline{A}_{irb} \varphi +\right.$$ 
$$\left.-S_{rb} (W_{jr} v_k+ W_{jk}v_r) S_{kj} \varphi \right)\, dV=0,$$
where $\overline{A}_{ijk}=S_{ir}D_r S_{jk}$ are the curvature coefficients of $\Om$ in $\R^N$ (these are smooth functions of $x$) and $D_j$, $D^*_a$ denote the partial derivatives respectively with respect to the variable $x_j$ (in $U\subset \R^N$) and $v_a$ (in $\R^N$).
 
\end{Dfi}

Define, for $i,j\in \{1, \ldots, N\}$, the function $P_{ij}(x,v)  =(\delta_{ik} - v_i v_k) S_{kj}$ (which appears in the above identity). Note that when $v\in T_x \Om$ is unit, the matrix $P(x,v)$ represents the matrix of orthogonal projection from $\R^N$ onto the $n$-dimensional subspace orthogonal to $v$ in $T_x \Om$. When $v$ is a unit normal to a hypersurface $M\subset \Om$, then $P$ is the orthogonal projection from $\R^N$ onto $T_x M \subset \R^N$.

We point out the following relations, that involve the (smooth) curvature coefficients of $\Om$. Recall that (see \cite{Hut}) $\overline{A}_{irb}= \overline{A}_{ibr}= \overline{B}_{ir}^b+ \overline{B}_{ib}^r$, where $\overline{B}_{ij}^k = \langle \overline{\nabla}_{{\textbf{e}_i}^T} \textbf{e}_j^T, \textbf{e}_k^N\rangle$ stands for the second fundamental form coefficients of $\Om$ in $\R^N$, for $i,j,k\in\{1,\ldots, N\}$, with $\overline{\nabla}$ denoting differentiation in $\R^N$ and $T$ and $N$ denoting respectively tangential and normal components with respect to $\Om$. We have:

\begin{eqnarray*}
   S_{jr} \overline{B}_{db}^j =  S_{jr} \langle \nabla_{\textbf{e}_d^T} \textbf{e}_b^T, \textbf{e}_j^\perp\rangle =  S_{jr} \langle \overline{B}(\textbf{e}_d^T, \textbf{e}_b^T), \textbf{e}_j \rangle= S\left(\overline{B}(\textbf{e}_d^T, \textbf{e}_b^T)\right) \cdot \textbf{e}_r = 0 \cdot \textbf{e}_r =0; \\
   P_{ir} \overline{A}_{irb}=P_{ir} \overline{B}_{ir}^b+P_{ir} \overline{B}_{ib}^r = P_{ir} \overline{B}_{ir}^b, \text{ using } P_{ir}=P_{iy} S_{yr};\\
   S_{ab} S_{yr} \overline{A}_{irb}= S_{ab} S_{yr} \overline{B}_{ir}^b=  S_{yr} S_{ab} \overline{B}_{ir}^b = 0, \text{ hence also }P_{kb}P_{jd} \overline{A}_{dkj} =0.
  \end{eqnarray*}
The defining identity for $W_{ia}$ in Definition \ref{Dfi:Riemannin_oriented_curvature} is then equivalently written as
\begin{equation}
\label{eq:Riemannin_oriented_curvature_B}
\int \left(P_{sb} (D_s \varphi) + (D^*_a \varphi) W_{ba} +  P_{ir} \overline{B}_{ir}^{b} \varphi  -S_{rb} (W_{jr} v_k+ W_{jk}v_r) S_{kj} \varphi \right)\, dV=0. 
\end{equation}

\noindent \textit{Claim}. Let $M$ be a $C^2$ hypersurface in $\Om$, oriented by a choice of unit normal $\nu$. The associated oriented integral varifold $V_M=(M, \nu, 1,0)$ satisfies the conditions in Definition \ref{Dfi:Riemannin_oriented_curvature}.

\begin{proof}[proof of the claim]
Consider the vector field $\phi \textbf{e}_b$, for $b\in \{1, \ldots, N\}$ and $\phi\in C^1_c(\Om)$. Then the component tangential to $\Om$ is given by $\phi S_{rb} \textbf{e}_r$, and the component tangential to $M$ is given by $(\phi S_{rb} \textbf{e}_r)^t=\phi P_{rj}S_{rb} \textbf{e}_j$, so that $\text{div}_M (\phi S_{rb} \textbf{e}_r)^t = P_{sj}D_s(\phi P_{rj} S_{rb})$. Then the divergence theorem gives

$$\int P_{sr}S_{rb}(D_s \phi) + P_{sr}(D_s S_{rb}) \phi + P_{sj} S_{rb} (D_s P_{rj}) \phi=0.$$
Since $P\circ S = P$ (and with the usual notation $\delta_j=P_{is}D_s$) we rewrite the identity as

$$\int P_{sb}(D_s \phi) + (\delta_r S_{rb}) \phi + S_{rb} (\delta_j P_{rj}) \phi =0.$$
Substituting $P_{rj}=(\delta_{rk} - v_r v_k) S_{kj}$ and setting $W_{ik}=\delta_i(\nu \cdot \textbf{e}_k)=P_{si}D_s(\nu \cdot \textbf{e}_k)$,
$$\delta_j P_{rj}=(\delta_{rk} - v_r v_k)\delta_j S_{kj}-(\delta_j v_r)v_k S_{kj} - (\delta_j v_k)v_r S_{kj} = (\delta_{rk} - v_r v_k)P_{jd}S_{dr}D_r S_{kj}$$ $$-(\delta_j v_r)v_k S_{kj} - (\delta_j v_k)v_r S_{kj} =(\delta_{rk} - v_r v_k) P_{jd}\overline{A}_{dkj}-W_{jr} v_k S_{kj} - W_{jk}v_r S_{kj}.$$ 
Moreover, $\delta_r S_{rb}=P_{rd}D_d S_{rb} = P_{ir}S_{id} D_d S_{rb} = P_{ir}(\overline{\delta}_i S_{rb}) = P_{ir}\overline{A}_{irb}$. (We use the notation $\overline{\delta}_i=S_{id} D_d$.)
The identity becomes then
$$\int P_{sb}(D_s \phi) +P_{ir}\overline{A}_{irb} \phi + \underbrace{S_{rb} (\delta_{rk} - v_r v_k)}_{P_{bk}} P_{jd} \overline{A}_{dkj}\phi-S_{rb} (W_{jr} v_k+ W_{jk}v_r) S_{kj} \phi =0,$$
and the third term vanishes (by the relations given above).
Taking $\phi(x)=\varphi(x,\nu(x))$ for $\varphi(x,v)$ with $x\in \R^N$ and $v\in \R^N$, we obtain:
$$\int P_{sb}(D_s \varphi) + (D^*_a \varphi)\underbrace{P_{sb}D_s(\nu(x) \cdot \textbf{e}_a)}_{W_{ba}} + P_{ir}\overline{A}_{irb} \varphi-S_{rb} (W_{jr} v_k+ W_{jk}v_r) S_{kj} \varphi =0,$$
with $\varphi$ evaluated at $(x,\nu(x))$, which concludes the proof that $(M, \nu, 1,0)$ is an oriented integral varifold with curvature as in Definition \ref{Dfi:Riemannin_oriented_curvature}.
\end{proof}

Next, we note that, testing with $\varphi(x,v)=\phi(x)$ in (\ref{eq:Riemannin_oriented_curvature_B}), for $\phi\in C^1_c(\Om)$, we obtain 
$$\int \left(P_{sb} (D_s \varphi) +P_{ir} \overline{B}_{ir}^{b} \varphi -S_{rb} (W_{jr} v_k+ W_{jk}v_r) S_{kj} \varphi \right)\, dV=0.$$
All functions in the integrand, except $W_{ia}$ and $v_j$, are even in $v$ (some only depend on $x$). Set $\tilde{W}_b(x)$ to be the weighted average (as in Lemma \ref{lem:mean_curv}) of $S_{rb} (W_{jr} v_k+ W_{jk}v_r) S_{kj}$ at $\xi(x)$ and $-\xi(x)$, which is $L^1_{\text{loc}} (\textbf{q}_\sharp V)$. We then obtain

$$\int \left(P_{sb} (D_s \phi) + P_{ir} \overline{A}_{irb} \phi -\tilde{W}_b \phi \right)\, d (\textbf{q}_\sharp V)=0.$$
The first term in the integrand is $\text{div}_{\textbf{q}_\sharp V} (\phi \textbf{e}_b)$. Therefore the identity just obtained amounts to the first variation formula evaluated on the vector field $\varphi \textbf{e}_b$ and implies that $\textbf{q}_\sharp V$ has first variation in $L^1_{\text{loc}} (\textbf{q}_\sharp V)$. Explicitly, $\int \text{div}_{\textbf{q}_\sharp V} (\varphi \textbf{e}_b) +\varphi \textbf{e}_b \cdot \vec{H}_{\textbf{q}_\sharp V}^{\R^N}=0$, with $\vec{H}_{\textbf{q}_\sharp V}^{\R^N}=(P_{ir} \overline{A}_{irb} -\tilde{W}_b) \textbf{e}_b$ (the generalised mean curvature of $\textbf{q}_\sharp V$ in $\R^N$). On the other hand, testing with the vector field $\phi(x)S_{ab}(x)\textbf{e}_a = (\varphi \textbf{e}_b)^T$ gives

$$\int \left(P_{sa} (D_s (\phi S_{ab})) + \underbrace{P_{ir} S_{ab} \overline{A}_{ira}}_{=0} \phi - \underbrace{S_{ab} \tilde{W}_a}_{\tilde{W}_b} \phi \right)\, d (\textbf{q}_\sharp V)=0,$$
where we used respectively the relations given earlier and the definition of $\tilde{W}_{ab}$ for the two braced terms.
This identity is $\int \text{div}_{\textbf{q}_\sharp V} (\varphi \textbf{e}_b)^T +(\varphi \textbf{e}_b)^T \cdot \vec{H}_{\textbf{q}_\sharp V}^{\Om}=0$, where the generalised mean curvature $\vec{H}_{\textbf{q}_\sharp V}^{\Om}$ of $\textbf{q}_\sharp V$ in $\Om$ is given by $-\tilde{W}_b \textbf{e}_b$. Note that $\vec{H}_{\textbf{q}_\sharp V}^{\Om}$ is the projection onto $\Om$ of $\vec{H}_{\textbf{q}_\sharp V}^{\R^N}$.

\medskip

We have established in particular the following: if $V$ is as in Definition \ref{Dfi:Riemannin_oriented_curvature}, the first variation of $\textbf{q}_\sharp V$ in $\Om$ is represented by a function in $L^q_{\text{loc}}(\textbf{q}_\sharp V)$. It follows, thanks to \cite{Men}, that for $V$ as in Definition \ref{Dfi:Riemannin_oriented_curvature}, with $q\geq 1$, we have the existence of $\{M_j\}_{j=1}^\infty$ such that $\|V\|(U \setminus (\cup_{j=1}^\infty M_j))=0$ and each $M_j$ is a $C^2$-hypersurface in $\Om$. By redefining the $M_j$'s we can assume that each of them admits an orientation, with unit normal $\nu_{M_j}$. Then we can prove the following analogue of Propositions \ref{Prop:C2_coeff} and \ref{Prop:odd_unique}. 

\begin{Prop}
\label{Prop:C^2_coeff_Riem}
Let $V$ and $W_{ia}$ (for $i,a\in\{1,\ldots, N\}$) be as in Definition \ref{Dfi:Riemannin_oriented_curvature}, with $q\geq 1$, and $\{M_j\}_{j=1}^\infty$ as above. Then for each $j$, for $V$-a.e.~$(x,v) \in M_j$  we have $W_{ia}(x,v)=\delta_i (\nu_{M_j} \cdot \textbf{e}_a)$ when $v=\nu_{M_j}(x)$, and $W_{ia}(x,v)=-\delta_i (\nu_{M_j} \cdot \textbf{e}_a)$ when $v=-\nu_{M_j}(x)$. This is well-defined $V$-a.e. In particular, $W_{ia}$ is a.e. uniquely defined, and odd in $v$.
\end{Prop}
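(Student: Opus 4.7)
My plan is to follow the blueprint of the Euclidean proof (Proposition \ref{Prop:C2_coeff}) and adapt it to the Riemannian identity (\ref{eq:Riemannin_oriented_curvature_B}). The preliminary ingredients transfer without essential change: the computation just before the proposition shows that the first variation of $\textbf{q}_\sharp V$ in $\Omega$ lies in $L^q_{\text{loc}}$, so Menne's $C^2$-rectifiability applies and produces the cover $\{M_j\}$ (which we may take orientable with continuous normal $\nu_{M_j}$). The absolute continuity $V \ll \mathcal{H}^n \res (\cup_j \tilde M_j)$ and the Lebesgue-continuity / tangent-measure identities (the analogues of (\ref{eq:V_r^n}), (\ref{eq:Leb_cont_modified}), (\ref{eq:char_Mj}), (\ref{eq:char_Mj_W})) are purely measure-theoretic and depend only on $W_{ia}\in L^1_{\text{loc}}(V)$, so they carry over verbatim. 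I will then fix a generic $(x_0,v_0)\in\tilde M_j$ at which all these properties hold, and treat the case $v_0=\nu_{M_j}(x_0)$; the case $v_0=-\nu_{M_j}(x_0)$ is obtained by replacing the defining function with its negative.

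For the test function, I will take a $C^2$ extension $f:O\to\R$ (for $O\subset \R^N$ a neighbourhood of $x_0$) of a Riemannian defining function for $M_j$, so that on $M_j$ we have $\nu_{M_j}(x)=S(x)\nabla f(x)/|S(x)\nabla f(x)|$, with $S\nabla f$ nonvanishing on a smaller neighbourhood by continuity. Set
\[
\phi_a(x,v) \;=\; v_a \;-\; \frac{(S(x)\nabla f(x))_a}{|S(x)\nabla f(x)|},
\]
which vanishes on the sheet $\{v=\nu_{M_j}(x)\}$ of $\tilde M_j$ and satisfies $D^*_c\phi_a=\delta_{ac}$. Let $\psi(x,v)=\tilde\psi(v)$ be a smooth cutoff separating that sheet from the opposite one, and take the normalised bump $\beta=\chi_r/\sigma(x_0,v_0)$ exactly as in the Euclidean case. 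I then plug $\varphi=\psi\beta\phi_a$ (free index $b$) into (\ref{eq:Riemannin_oriented_curvature_B}) and analyse each of the four terms in the limit $r\to 0$.

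The Lebesgue/density arguments used in the Euclidean proof apply to every contribution in which derivatives fall on $\psi$ or $\beta$ and the factor $\phi_a$ remains: the combination of $|\phi_a|\leq Cr$ on $B_r(x_0,v_0)$ and the density bounds (\ref{eq:V_r^n}), (\ref{eq:char_Mj}), (\ref{eq:Leb_cont_W}) forces these pieces to $0$. What survives from the first two terms of (\ref{eq:Riemannin_oriented_curvature_B}) is
\[
-P_{sb}(x_0,v_0)\,\tfrac{\partial}{\partial x_s}(\nu_{M_j})_a(x_0)\;+\;W_{ba}(x_0,v_0) \;=\; -\delta_b(\nu_{M_j})_a(x_0) + W_{ba}(x_0,v_0),
\]
coming respectively from $P_{sb}\,\psi\beta\,D_s\phi_a$ and from $W_{bc}\,\psi\beta\,D^*_c\phi_a=\psi\beta W_{ba}$ via (\ref{eq:Leb_cont_modified}). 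The ambient-curvature term $\int P_{ir}\overline B_{ir}^{\,b}\,\varphi\,dV$ has smooth coefficients and $|\varphi|\leq Cr$, so it is bounded by $Cr\cdot V(B_r)/r^n\to 0$; the mean-curvature-type term is bounded by $Cr\cdot r^{-n}\int_{B_r}|W|\,dV$, which tends to $0$ by (\ref{eq:Leb_cont_W}). The identity then yields $W_{ba}(x_0,v_0)=\delta_b(\nu_{M_j})_a(x_0)$ at every generic point with $v_0=\nu_{M_j}(x_0)$, and the sign-flipped formula in the opposite case; oddness and $V$-a.e.~uniqueness follow exactly as in the remark after Proposition \ref{Prop:C2_coeff} (the Lebesgue value of $W_{ba}$ is intrinsic to $V$ and independent of the chosen $j$).

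The main obstacle I anticipate is purely bookkeeping: keeping track of the new factors $S(x)$, $\overline B_{ir}^{\,b}$, and the $S_{rb}S_{kj}$ sandwich in the mean-curvature term, and verifying that each of these either appears multiplied by the small quantity $\phi_a$ or has already been absorbed into a purely ambient smooth function of $x$. None of these modifications interact with the Lebesgue-differentiation step that pins down $W_{ba}(x_0,v_0)$; the only genuinely Riemannian input is the formula $\nu_{M_j}=S\nabla f/|S\nabla f|$ in the construction of $\phi_a$, which guarantees that $\phi_a$ actually vanishes on the correct sheet of the lift.
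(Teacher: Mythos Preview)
Your proposal is correct and follows essentially the same approach as the paper, which simply says ``proceed as in Section \ref{geometric_coeff}, using test functions $\phi_a(x,v)=v_a-\tfrac{D_af}{|\nabla f|}(x)$ with $f:U\to\R$ chosen so that $\{f=0\}\cap\Om=M_j$ and $\nabla f=\nu_{M_j}$ on $M_j$ (extended via tubular-neighbourhood coordinates)''. The only difference is your choice $\phi_a=v_a-(S\nabla f)_a/|S\nabla f|$ in place of the paper's $\phi_a=v_a-D_af/|\nabla f|$: the paper engineers $f$ so that its full Euclidean gradient already equals $\nu_{M_j}$ along $M_j$, avoiding the explicit $S$-projection, whereas you keep a generic defining function and project; both vanish on the correct sheet of $\tilde M_j$ and satisfy $D^*_c\phi_a=\delta_{ac}$, so the Lebesgue/density blow-up analysis runs identically.
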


The proof of Proposition \ref{Prop:C^2_coeff_Riem} proceeds as in Section \ref{geometric_coeff}, using test functions 

$$\phi_a(x,v)= v_a - \frac{D_a f}{|\nabla f|}(x),$$
where $f:U \to \R$ is chosen so that $\{f=0\} \cap \Om = M_j$, $\nabla f = \nu_{M_j}$ on $M_j$. Such $f$ can be defined on $\Om$ and then extended to $U$ (since $\Om$ is a smooth submanifold and we can use tubular neighbourhood coordinates, possibly making $U$ smaller).

\medskip

We moreover have, arguing either directly as in Section \ref{proofs}, or by invoking \cite[Proposition 4.2.4 and 4.4.2(ii)]{Hut}, that the class of oriented $n$-varifolds in $\Om$ with curvature in $L^q_{\text{loc}}$ has the following compactness property.

\begin{Prop}
\label{Prop:compactness_curvature_bounds_Riem}
Let $\{V^\ell\}_{\ell\in \N}$ be a family of oriented integral $n$-varifolds in $\Om$ that satisfy the assumptions of Hutchinson's compactness theorem (see \cite{Hut}, or Section \ref{prelim}) and that moreover have curvature coefficients $W_{ia}^\ell\in L^q_{\text{loc}}(V^\ell)$ for $i,a\in \{1, \ldots, N\}$ (in the sense of Definition \ref{Dfi:Riemannin_oriented_curvature}) such that, for every compact $K$, $\sup_{\ell}\int_K |W^\ell_{ia}|^q\,dV^\ell<\infty$ for $q>1$. Then any oriented integral varifold $V$ in $\Om$ that is a limit point of $\{V^\ell\}$ has curvature $W_{ia}\in L^q_{\text{loc}}(V)$ and we have (subsequentially) $\vec{W}^{\ell_k} V^{\ell_k} \to \vec{W}  V$ (as vector-valued measures --- here $\vec{W}$ denotes the vector with entries $W_{ia}$ for a fixed ordering of $\{(i,a)\}_{i,a\in \{1, \ldots, N\}}$, and $\vec{W^\ell}$ is analogously defined; the same convergence can be expressed in terms of measure-function pairs $(W_{ia}^\ell, V^\ell)$ and $(W_{ia}, V)$ as in \cite{Hut}).
\end{Prop}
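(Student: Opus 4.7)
The plan is to repeat the scheme used in Section \ref{proofs}, adapted to the identity in Definition \ref{Dfi:Riemannin_oriented_curvature}. First, Hutchinson's compactness theorem yields (after extracting a subsequence, not relabelled) an oriented integral varifold $V$, supported in $\Omega$, with $V^\ell \to V$ in the sense of oriented varifolds. The applicability of the compactness theorem is part of the assumption; note that the required control on the mass of $\partial \mathbf{c}(V^\ell)$ (needed to apply it) is automatic from the curvature bound combined with H\"older's inequality, since $q>1$, as we will check next.

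Next, I would consider the vector-valued Radon measures $\vec{\mu}_\ell = \vec{W}^\ell V^\ell$ on $U \times \R^N$, whose entries are the signed measures $W_{ia}^\ell\, V^\ell$. For any compact $K\subset U \times \R^N$ and $q' = q/(q-1)$, H\"older gives $|\vec{\mu}_\ell|(K) \leq \left(\int_K |\vec{W}^\ell|^q dV^\ell\right)^{1/q} V^\ell(K)^{1/q'}$, which is locally uniformly bounded by hypothesis. By weak* compactness of Radon measures I would extract a further subsequence converging weak* to a vector-valued Radon measure $\vec{\mu}$. The key absolute-continuity estimate $|\vec{\mu}|(A) \leq K_0^{1/q} V(A)^{1/q'}$ on open bounded sets $A$ is proved exactly as in Section \ref{proofs}: testing $|\vec{\mu}_\ell|$ against a cutoff $u \in C^0_c(A)$ with $u \equiv 1$ on a compact exhaustion of $A$, applying H\"older, and using weak* lower semicontinuity for $|\vec{\mu}|$ together with $V^\ell \to V$. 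This estimate extends to all Borel sets; by Radon--Nikodym $\vec{\mu} = \vec{W}\,V$ with $\vec{W}\in L^1_{\text{loc}}(V)$, and the convergence $\vec{W}^\ell V^\ell \rightharpoonup \vec{W} V$ as vector-valued measures is equivalent to measure-function pair convergence of $(W_{ia}^\ell, V^\ell)$ to $(W_{ia}, V)$ in the sense of \cite[Proposition 4.2.4]{Hut}.

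I would then pass the identity in Definition \ref{Dfi:Riemannin_oriented_curvature}, written in the form (\ref{eq:Riemannin_oriented_curvature_B}), to the limit for each test $\varphi \in C^1_c(U\times \R^N)$. The coefficients $S_{ij}$, $\overline{B}_{ir}^b$ appearing in (\ref{eq:Riemannin_oriented_curvature_B}) are smooth functions of $x$ depending only on the ambient isometric embedding of $\Omega$ into $\R^N$, and are bounded on the compact support of $\varphi$; together with $P_{sb}$ and the polynomials in $v$, they combine with $\varphi$ and its derivatives to produce continuous compactly supported test functions. Hence the terms not involving $W$ pass to the limit using $V^\ell \rightharpoonup V$, while the two terms linear in $W$ pass thanks to the measure-function pair convergence established above. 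The limiting identity is precisely (\ref{eq:Riemannin_oriented_curvature_B}) for $V$ with coefficients $W_{ia}$, so $V$ has curvature in the sense of Definition \ref{Dfi:Riemannin_oriented_curvature}.

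Finally, to upgrade $W_{ia} \in L^1_{\text{loc}}(V)$ to $W_{ia} \in L^q_{\text{loc}}(V)$, I would use the same convex-duality argument as in Section \ref{proofs}: writing the convex function $y \mapsto |y|^q$ as the supremum of a countable family of affine functions $a_m + \vec{b}_m \cdot y$ on $\R^{N^2}$, the measure-function pair convergence yields, for every $\psi\in C^0_c(U\times \R^N)$ with $0\le \psi \le 1$, $\int (a_m + \vec{b}_m \cdot \vec{W})\psi\,dV \leq \liminf_\ell \int |\vec{W}^\ell|^q \psi\, dV^\ell$, and taking the supremum over families $\{\psi_m\}$ with disjoint compact supports in a bounded open set $A$ gives $\int_A |\vec{W}|^q dV \leq \liminf_\ell \int_A |\vec{W}^\ell|^q dV^\ell$, which is finite by hypothesis. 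The main obstacle is not conceptual but notational: ensuring that the slightly more involved algebraic structure of (\ref{eq:Riemannin_oriented_curvature_B}) --- in particular the presence of the smooth ambient coefficients $S_{ij}$ and $\overline{B}_{ir}^b$ --- does not interfere with the linearity in $W$ that underlies the passage to the limit; since it does not, the Riemannian case reduces cleanly to the Euclidean template.
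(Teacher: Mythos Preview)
Your proof is correct and follows exactly the approach the paper indicates (the paper itself does not give a separate proof, merely pointing to the argument in Section \ref{proofs} or to \cite[Propositions 4.2.4 and 4.4.2(ii)]{Hut}; you carry out the former in detail, correctly noting that the extra smooth ambient coefficients $S_{ij}$, $\overline{B}_{ir}^b$ do not interfere with the linearity in $W$). One small caveat: your parenthetical remark that the bound on $M_U(\partial\mathbf{c}(V^\ell))$ is ``automatic from the curvature bound'' is never actually checked---the H\"older estimate that follows bounds $|\vec{\mu}_\ell|$, not the boundary mass---and is not obvious without further argument; since the boundary-mass bound is already part of the stated hypotheses, this stray claim is harmless but should be dropped.
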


Since $W_{ia}$ are odd in $v$ (Proposition \ref{Prop:C^2_coeff_Riem}), $S_{rb} (W_{jr} v_k+ W_{jk}v_r) S_{kj}$ is the even lift of $\tilde{W}_b$, which, as we saw, is $\vec{H}_{\textbf{q}_\sharp V}^\Om \cdot \textbf{e}_b$. When the mean curvature of $V^\ell$ in $\Om$ is prescribed by $g^\ell\in C^0(\Om)$, we have $S_{rb} (W^\ell_{jr} v_k+ W^\ell_{jk}v_r) S_{kj} =- g^\ell v_b$. This is a continuous ambient function (in $U \times \R^N$, extending to a tubular neighbourhood of $\Om$). Repeating the arguments of Section \ref{proofs} we establish the following general compactness result:

\begin{thm}
\label{thm:overall}
Let $V^{\ell}=(R_\ell, \xi_\ell, \theta_1^\ell, \theta_2^\ell)$ be a sequence of integral oriented $n$-varifolds in an $(n+1)$-dimensional oriented Riemannian manifold $\Om$, with $\sup_{\ell\in\N}\|\textbf{q}_\sharp V^\ell\|(U)<\infty$ and $\sup_{\ell\in\N}M_U(\p \textbf{c}(V^\ell))<\infty$ for every $U\subset \subset \Om$. Assume that there is $q>1$ such that $V^{\ell}$ have curvatures in $L^q_{\text{loc}}$ (in the sense of Definition \ref{Dfi:Riemannin_oriented_curvature}), and that the curvature coefficients are locally uniformly bounded in the $L^q(V^\ell)$-norm. 
By the curvature assumption, $\delta(\textbf{q}_\sharp V^\ell)$ is represented by integration on $\|\textbf{q}_\sharp V^\ell\|$ of $\vec{H}^\ell \in L^q_{\text{loc}}(\|V^\ell\|)$. We assume that there exists a measurable choice of orientation $\nu_\ell$ on $R_\ell$ such that $\vec{H}^\ell = g_\ell \nu_\ell$ a.e. 

Then every varifold limit is of the type $\textbf{q}_\sharp V$ for an oriented integral varifold $V=(R, \xi, \theta_1, \theta_2)$ that has curvature in $L^q_{\text{loc}}$; moreover, there exists a measurable choice of orientation $\nu$ on $R$ such that $\vec{H} = g \nu$ a.e. (By Allard's theorem, $\delta(\textbf{q}_\sharp V)$ is represented by integration on $\|\textbf{q}_\sharp V\|$ of $\vec{H}\in L^q_{\text{loc}}(\|V\|)$, the generalised mean curvature.)
\end{thm}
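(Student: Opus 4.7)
The plan is to adapt the strategy of the proof of Theorem \ref{thm:main} (Section \ref{proofs}) to the Riemannian setting: combine Hutchinson's compactness for oriented integral varifolds with the curvature compactness of Proposition \ref{Prop:compactness_curvature_bounds_Riem}, pass the defining identity (\ref{eq:Riemannin_oriented_curvature_B}) for the $V^\ell$ to the limit, and then use the oddness of the limit curvature coefficients (Proposition \ref{Prop:C^2_coeff_Riem}) to identify the generalised mean curvature of the limit with $g\nu$ for a measurable orientation $\nu$.

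First, by Hutchinson's compactness (the uniform $L^q$ curvature bound, through the discussion after Definition \ref{Dfi:Riemannin_oriented_curvature}, produces a uniform local $L^1$ bound on $\|\delta(\textbf{q}_\sharp V^\ell)\|$, so Hutchinson's hypotheses are met), I would extract a subsequence along which $V^\ell$ converges as oriented integral varifolds to some $V=(R,\xi,\theta_1,\theta_2)$. A further subsequence, via Proposition \ref{Prop:compactness_curvature_bounds_Riem}, yields curvature coefficients $W_{ia}\in L^q_{\text{loc}}(V)$ with the measure-function pair convergence $W^\ell_{ia}V^\ell \rightharpoonup W_{ia}V$. The identity (\ref{eq:Riemannin_oriented_curvature_B}) then passes term-by-term to the limit: the terms $P_{sb}D_s\phi$ and $P_{ir}\overline{B}^b_{ir}\phi$ pair $V^\ell\rightharpoonup V$ against smooth bounded functions of $(x,v)$ on $\spt{V^\ell}\subset \mathbb{S}\Omega$; the terms $W^\ell_{ba}D^*_a\phi$ and $S_{rb}(W^\ell_{jr}v_k+W^\ell_{jk}v_r)S_{kj}\phi$ are products of the $W^\ell_{ia}$ with bounded continuous functions of $(x,v)$ (polynomial in $v$ with smooth $x$-coefficients), to which the measure-function pair convergence applies. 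This shows that $V$ satisfies Definition \ref{Dfi:Riemannin_oriented_curvature} with curvature coefficients $W_{ia}$.

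Next, by Proposition \ref{Prop:C^2_coeff_Riem}, the limit $W_{ia}$ is odd in $v$, so $F_b(x,v):=S_{rb}(W_{jr}v_k+W_{jk}v_r)S_{kj}$ is even in $v$; by the calculation after Definition \ref{Dfi:Riemannin_oriented_curvature}, it descends $\|V\|$-a.e.~to $-\vec{H}\cdot\textbf{e}_b$, where $\vec{H}$ is the generalised mean curvature of $\textbf{q}_\sharp V$ in $\Omega$. Analogously, $F^\ell_b(x,v)=-g_\ell(x)(\nu_\ell(x))_b$ at $V^\ell$-a.e.~$(x,v)$, since $\vec{H}^\ell=g_\ell\nu_\ell$. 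Pushing the measure convergence $F^\ell_b V^\ell\rightharpoonup F_b V$ forward through $\varpi:\Omega\times\R^N\to\Omega$ gives
\[ g_\ell\,\nu_\ell\,\|V^\ell\| \;\rightharpoonup\; \vec{H}\,\|V\| \qquad \text{as vector Radon measures on } \Omega. \]
Using the standing locally uniform convergence $g_\ell\to g$ (implicit in the theorem's setup), I would extract after a further subsequence $\nu_\ell\|V^\ell\|\rightharpoonup\vec{\lambda}$ for a vector Radon measure $\vec{\lambda}$ on $\Omega$; then $g\vec{\lambda}=\vec{H}\|V\|$, and lower semi-continuity of total variation (using $|\nu_\ell|\equiv 1$ and $\|V^\ell\|\rightharpoonup\|V\|$) gives $|\vec{\lambda}|\leq\|V\|$, so $\vec{\lambda}=\nu\|V\|$ for some measurable $\nu$ with $|\nu|\leq 1$ $\|V\|$-a.e., yielding $\vec{H}=g\nu$ $\|V\|$-a.e.

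The main obstacle I anticipate is upgrading $|\nu|\leq 1$ to $|\nu|=1$ on $\{g\neq 0\}$, so that $\nu$ is a genuine measurable unit normal there (on $\{g=0\}$, $\vec{H}=0$ and $\nu$ can be completed arbitrarily to a measurable unit normal to $T_xR$). I expect this to follow from combining two ingredients: Brakke's orthogonality, which places $\vec{H}(x)$ in the one-dimensional normal space to $T_xR$ at $\|V\|$-a.e.~$x$ (so the direction of $\vec{H}$ is already a unit normal wherever $\vec{H}\neq 0$); and the $C^2$-rectifiability of $\|V\|$ from Menne's theorem combined with Proposition \ref{Prop:C^2_coeff_Riem}, which identifies $\vec{H}(x)$ with the classical mean curvature of a $C^2$ leaf $M_j$ through $x$. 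Comparing this geometric $\vec{H}$ with the pointwise constraint $|\vec{H}^\ell|=|g_\ell|$ along the approximating sequence (via a localisation/selection of the direction $\nu_\ell$ on the fine cover, exploiting the uniform convergence $g_\ell\to g$) should pin down $|\vec{H}(x)|=|g(x)|$ on $\{g\neq 0\}$, and hence $|\nu|=1$ there. With this, $\nu$ is the desired measurable orientation on $R$ and $\vec{H}=g\nu$ a.e., concluding the proof.
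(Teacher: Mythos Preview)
Your first two paragraphs are correct and match the paper's route exactly: Hutchinson compactness plus Proposition~\ref{Prop:compactness_curvature_bounds_Riem} give an oriented integral limit $V$ with curvature in $L^q_{\text{loc}}$, and Proposition~\ref{Prop:C^2_coeff_Riem} makes the limit mean-curvature term $F_b=S_{rb}(W_{jr}v_k+W_{jk}v_r)S_{kj}$ even and equal to the lift of $-\vec{H}_b$.

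The gap is in your third paragraph. You write $F^\ell_b(x,v)=-g_\ell(x)(\nu_\ell(x))_b$ and then push forward to $\Omega$; this throws away precisely the feature that makes the oriented framework useful. The paper instead observes (see the paragraph preceding Theorem~\ref{thm:overall}, and the proof of Theorem~\ref{thm:main} in Section~\ref{proofs}) that $V^\ell$-a.e.\ one has
\[
F^\ell_b(x,v)=-g_\ell(x)\,v_b,
\]
because on $\text{spt}(V^\ell)\subset \mathbb{S}\Omega$ the variable $v$ \emph{is} a unit normal to $R_\ell$ at $x$, so $(\nu_\ell(x))_b$ may be replaced by $v_b$ (after arranging $\xi_\ell=\nu_\ell$ on $\{g_\ell\neq 0\}$, which is harmless since $(R_\ell,\xi_\ell,\theta_1^\ell,\theta_2^\ell)=(R_\ell,-\xi_\ell,\theta_2^\ell,\theta_1^\ell)$). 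The right-hand side is now a \emph{continuous ambient function} on $U\times\R^N$ that converges locally uniformly to $-g(x)v_b$. One then rewrites (\ref{eq:Riemannin_oriented_curvature_B}) for $V^\ell$ with the mean-curvature term replaced by $-g_\ell v_b$, passes it to the limit, and compares with the limit identity carrying $W_{ia}$; this forces $F_b(x,v)=-g(x)v_b$ $V$-a.e. Since the even lift of $-\vec{H}_b$ equals $-g\,v_b$ and $|v|=1$ on $\text{spt}(V)$, one reads off $\vec{H}(x)=g(x)\xi(x)$ directly, with $\nu:=\xi$ a genuine measurable \emph{unit} normal. No ``$|\nu|\leq 1$'' issue ever arises.

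Your proposed resolution of that issue does not work as stated. The $C^2$-rectifiability and Proposition~\ref{Prop:C^2_coeff_Riem} identify $\vec{H}(x)$ with the classical mean curvature of a $C^2$ leaf through $x$; this fixes the \emph{direction} of $\vec{H}$ (already given by Brakke orthogonality) but says nothing about $|\vec{H}|$ versus $|g|$. And ``localisation/selection of $\nu_\ell$ on the fine cover'' has no clear meaning: the $\nu_\ell$ live on $R_\ell$, not on the leaves $M_j$ covering $R$, and weak convergence of $\vec{H}^\ell\|V^\ell\|$ does not transmit the pointwise constraint $|\vec{H}^\ell|=|g_\ell|$---cancellation of this type is exactly the phenomenon the paper is designed to rule out. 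The only way to recover $|\vec{H}|=|g|$ is to keep the argument in the bundle, where the normal is the coordinate $v$ and has unit length for free.
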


Indeed, an analysis of the proof of Theorem \ref{thm:main} shows that the condition that the varifolds in question arise as boundaries of Caccioppoli sets is only used in the following steps. It permits to give an orientation to said varifolds, so that we can treat them as oriented integral varifolds. It permits the use of Hutchinson's compactness theorem, as it implies that the boundary in the sense of currents is $0$, in particular the boundary masses of the associated currents are locally uniformly bounded. Finally, once the prescribed mean curvature condition is established for the limit (in the paragraph following (\ref{eq:cuvature_identity_limit_with_g})), it permits the multiplicity-$1$ conclusion on $\{g\neq 0\}$. In Theorem \ref{thm:overall} the first two facts are already in the hypotheses and we do not aim to conclude the third, we only need to establish the preservation of the prescribed mean curvature condition: this follows from the arguments of Section \ref{proofs} up to the paragraph that follows (\ref{eq:cuvature_identity_limit_with_g}).

If, on the other hand, we add the hypothesis that $V^\ell$ are associated to boundaries of Caccioppoli sets, we follow the arguments of Section \ref{proofs} to the end and establish:

\begin{thm}
\label{thm:Riemannian}
Theorems \ref{thm:C^2_case} and \ref{thm:main}, and Corollary \ref{cor:boundaries}, hold with $\Om$ replaced by a Riemannian manifold (of dimension $n+1$). 
\end{thm}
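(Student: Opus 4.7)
The plan is to follow the template of Section \ref{proofs} verbatim, replacing each Euclidean ingredient by its Riemannian counterpart already established in Section \ref{Riemannian}. First I would fix the isometric embedding $\Om \hookrightarrow U \subset \R^N$ and consider $V^\ell = (\p^* E_\ell, \nu_\ell, 1, 0)$ as oriented integral varifolds in $\Om$. In the setting of the $C^2$-case, the claim proved right after Definition \ref{Dfi:Riemannin_oriented_curvature} shows that $V^\ell$ has curvature with coefficients $W^\ell_{ia}(x,v) = \pm \delta_i(\nu_\ell \cdot \textbf{e}_a)$; the relation $\delta_i(\nu_\ell \cdot \textbf{e}_a) = -\langle \overline{\nabla}_{\textbf{e}_i^T}\textbf{e}_a^T, \nu_\ell \rangle$ (computed as in the original Remark after (\ref{eq:cuvature_identity_Vell})) bounds $|W^\ell_{ia}|$ by the second fundamental form $|B^\ell|$, so the given $L^q$ hypothesis transfers to a locally uniform $L^q$-bound for the $W^\ell_{ia}$. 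In the setting of Theorem \ref{thm:main} the $L^q$ bound is assumed directly. Since $\textbf{c}(V^\ell) = \p \llbracket E_\ell \rrbracket$ are cycles, Hutchinson's compactness gives an oriented integral limit $V$, and Proposition \ref{Prop:compactness_curvature_bounds_Riem} guarantees that $V$ has curvature coefficients $W_{ia} \in L^q_{\text{loc}}(V)$ with $\vec{W}^\ell V^\ell \rightharpoonup \vec{W} V$ as vector-valued measures.

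Next I would pass identity (\ref{eq:Riemannin_oriented_curvature_B}) to the limit. The term depending on the coefficients via $(D^*_a \varphi)W_{ba}$ passes because of the measure-function pair convergence; the terms involving only $P_{sb}$ and the fixed ambient quantities $P_{ir}\overline{B}_{ir}^b$ pass because $V^\ell \to V$ weakly. To incorporate the mean curvature hypothesis, I would observe that, as shown in Section \ref{Riemannian}, the hypothesis $\vec{H}^{\ell,\Om}_{\textbf{q}_\sharp V^\ell} = g_\ell \nu_\ell$ is equivalent (after taking weighted averages in $v$) to the pointwise $V^\ell$-a.e.~identity $S_{rb}(W^\ell_{jr} v_k + W^\ell_{jk} v_r) S_{kj} = -g_\ell v_b$ on $\mathbb{S}\Om$, because $W^\ell_{ia}$ are odd in $v$ by Proposition \ref{Prop:C^2_coeff_Riem}. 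The right-hand side extends to an ambient continuous function $-g_\ell(x) v_b$ on $U \times \R^N$, and the locally uniform convergence $g_\ell \to g$ then allows me to pass the whole identity to the limit, yielding
\[
\int\Bigl( P_{sb}(D_s\varphi) + (D^*_a\varphi) W_{ba} + P_{ir}\overline{B}_{ir}^b \varphi + g\, v_b\, \varphi \Bigr)\, dV = 0.
\]

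Comparing this with the defining identity (\ref{eq:Riemannin_oriented_curvature_B}) for the limit $V$ (with its own curvature coefficients $W_{ia}$), the arbitrariness of $\varphi \in C^1_c(U \times \R^N)$ gives $V$-a.e.~the identity $S_{rb}(W_{jr} v_k + W_{jk} v_r)S_{kj} = -g v_b$. Proposition \ref{Prop:C^2_coeff_Riem} now tells me the $W_{ia}$ are odd in $v$, so the left-hand side is even in $v$. By the weighted-average computation performed in Section \ref{Riemannian} (the Riemannian analogue of Lemma \ref{lem:mean_curv}), this even function is precisely the lift of $\vec{H}^\Om_{\textbf{q}_\sharp V} \cdot \textbf{e}_b$. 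Therefore the prescribed-mean-curvature condition $\vec{H}^\Om_{\textbf{q}_\sharp V} = g\, \xi_{\text{avg}}$ holds in the appropriate averaged sense, which, combined with the current identity $\textbf{c}(V) = \p\llbracket E \rrbracket$ (obtained because $\textbf{c}(V^\ell) \to \textbf{c}(V)$ and $\p \llbracket E_\ell \rrbracket \to \p \llbracket E \rrbracket$), forces the decomposition $V = (\p^*E, \nu_E, 1, 0) + (\mathcal{R}, \textbf{n}, \theta, \theta)$ with $\mathcal{R} \subset \{g=0\}$, exactly as in Section \ref{proofs}.

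For the Riemannian version of Corollary \ref{cor:boundaries}, I would use the same trick as in the Euclidean proof: the $C^2$-rectifiability cover $\{M_j\}$ of $\|V\|$ provided by \cite{Men} can be refined so that $S = \{g=0\}$ appears as one of the sheets, and then Proposition \ref{Prop:C^2_coeff_Riem} identifies the curvature coefficients of $V$ on $\tilde{S}$ with the geometric curvature of $S$. Since $V$-a.e.~point of $\mathcal{R}$ lies in $S$ and at such points $\vec{H}^\Om_{\textbf{q}_\sharp V}$ vanishes, the classical mean curvature $h_S$ would have to vanish on a set of positive $\mathcal{H}^n$-measure in $S$; the finite-order vanishing hypothesis on $h_S$, applied at a point of $\mathcal{H}^n$-density $1$, yields a contradiction and forces $\mathcal{H}^n(\mathcal{R}) = 0$.

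The main obstacle I anticipate is purely bookkeeping: verifying that the weighted-average identification of mean curvature (Lemma \ref{lem:mean_curv} in Euclidean space) works verbatim in the Riemannian setting. This has actually been carried out in Section \ref{Riemannian} itself when passing from the integrated identity to the first variation of $\textbf{q}_\sharp V$ on test vector fields of the form $(\varphi \textbf{e}_b)^T$, giving $\vec{H}^\Om_{\textbf{q}_\sharp V} = -\tilde{W}_b \textbf{e}_b$ with $\tilde{W}_b$ the average of $S_{rb}(W_{jr}v_k + W_{jk}v_r)S_{kj}$ at $\pm \xi$. So the required analogue of Lemma \ref{lem:mean_curv} is effectively already in hand, and the translation to the Riemannian setting is essentially mechanical once the Riemannian curvature identity is available.
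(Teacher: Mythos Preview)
Your proposal is correct and follows exactly the route the paper takes: the paper's own argument for Theorem~\ref{thm:Riemannian} is the single sentence ``we follow the arguments of Section~\ref{proofs} to the end'' (after having set up the Riemannian analogues in Section~\ref{Riemannian}), and you have faithfully unpacked precisely that. One minor point of bookkeeping: in the $C^2$ case your bound $|W^\ell_{ia}| \leq |B^\ell|$ is not quite literal in the embedded picture---there is an additional contribution from the second fundamental form of $\Om$ in $\R^N$---but that term is smooth and locally bounded, so the locally uniform $L^q$ bound on $W^\ell_{ia}$ follows all the same.
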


\begin{oss}
For completeness, we point out that if $V$ is as in Definition \ref{Dfi:Riemannin_oriented_curvature} then:
\begin{itemize}
 \item[(1)] (as in Proposition \ref{Prop:further_prop_curv_coeff}) we have that $W_{ij}v_j=0$ and $W_{ij}=W_{ji}$, and in particular that $S_{rb} (W_{jr} v_k+ W_{jk}v_r) S_{kj}= W_{jk}S_{kj} v_b $.
 \item[(2)] (as in Section \ref{relation_with_Hut})  $\textbf{q}_\sharp V$ is a varifold with curvature in the sense of \cite{Hut}, where we view $\textbf{q}_\sharp V$ as measure in $U\times \R^{N^2}$, and $\R^{N^2}$ has coordinates $P_{ij}$, $i, j\in \{1, \ldots, N\}$. To see this, we take $\varphi\in C^1_c(U \times \R^{N^2})$ and use the chain rule in (\ref{eq:Riemannin_oriented_curvature_B}) with $\phi(x,v)=\varphi(x, P(x,v))$ with $P_{ij}(x,v)=(\delta_{ik}-v_iv_k) S_{kj}(x)$:
\begin{eqnarray*}
D_s \phi = D_s \varphi + D^*_{cd}\varphi D_s P_{cd} =  D_s \varphi + (D^*_{cd}\varphi) (\delta_{ck}-v_c v_k) (D_s S_{kd}),\\ 
D^*_a \phi = - (D^*_{cd}\varphi) S_{kd} (\underbrace{(D^*_a v_c)}_{\delta_{ac}} v_k + \underbrace{(D^*_a v_k)}_{\delta_{ak}} v_c) = - (D^*_{ad}\varphi) S_{kd} v_k -  (D^*_{cd}\varphi) S_{ad} v_c,
\end{eqnarray*}
so substituting we obtain
\begin{eqnarray*}
\int P_{sb} D_s \varphi+ (D^*_{cd}\varphi) (\delta_{ck}-v_c v_k)\underbrace{\overbrace{P_{jb}S_{js}}^{P_{bs}} (D_s S_{kd})}_{P_{jb}\overline{A}_{jkd}}-\\
-(D^*_{ad}\varphi) \underbrace{S_{kd} v_k}_{v_d}  W_{ba}-  (D^*_{cd}\varphi) S_{ad} v_c  W_{ba} + \left(P_{ir} \overline{A}_{irb} - S_{rb} (W_{jr} v_k+ W_{jk}v_r) S_{kj} \right) \phi=0. 
\end{eqnarray*}
Define $A_{bcd}=(\delta_{ck}-v_c v_k)P_{jb}\overline{A}_{jkd} - S_{kd}v_k  W_{bc}-S_{ad}v_c  W_{ba}.$ Then we only need
$$A_{dbd}=(\delta_{bk}-v_b v_k)P_{jd}\overline{A}_{jkd} - S_{kd} v_k  W_{db}-S_{ad}v_b  W_{da}$$
to coincide with the expression in parenthesis, that is,
$$P_{ir} \overline{A}_{irb} -S_{rb} W_{dr}v_k S_{kd} - \underbrace{S_{rb} v_r}_{v_b} W_{jk} S_{kj}.$$
When $W_{ia}$ are associated to a $C^2$ hypersurface oriented by $\nu$ we have $S_{rb} W_{dr} = S_{rb} \delta_d (\nu \cdot \textbf{e}_r)=S_{rb}P_{da} D_a (\nu \cdot \textbf{e}_r)=P_{da} D_a (\nu \cdot S_{rb}\textbf{e}_r)=\delta_d (\nu \cdot (S \textbf{e}_b))=\delta_d (\nu \cdot\textbf{e}_b)=W_{db}$. By Proposition \ref{Prop:C^2_coeff_Riem} we then have $S_{rb} W_{dr}=W_{db}$ holds $V$-a.e.
Finally, notice that $S_{bk}=P_{bk}+v_b v_k$ $V$-a.e., therefore $(\delta_{bk}-v_b v_k)P_{jd}\overline{A}_{jkd}=P_{jd}\overline{A}_{jbd}+(P_{bk}-S_{bk})P_{jd}\overline{A}_{jkd}=P_{jd}\overline{A}_{jbd}$. The proof of the claim is complete.
\end{itemize}

\end{oss}

A concrete instance of Theorem \ref{thm:overall} for hypersurfaces that do not arise as boundaries is the following:

\begin{thm}
\label{thm:immersions}
Let $g_\ell, g$ ($\ell\in \N$) be continuous functions in an $(n+1)$-dimensional Riemannian manifold $\Om$ such that $g_\ell \to g$ in $C^0_{\text{loc}}(\Om)$. Let $M_\ell=\iota_\ell(\Sigma_\ell)$ be two-sided properly $C^2$-immersed closed hypersurfaces in $\Om$ such that, for every open set $U\subset \subset \Om$, $\sup_{\ell\in \N} \mathcal{H}^n(M_\ell \cap U) <\infty$. (We denote by $\Sigma_\ell$ abstract $n$-manifolds and by $\iota_\ell$ the immersion.) Denote by $\nu_\ell$ a determination of unit normal for $M_\ell$ and assume that the mean curvature vector of $M_\ell$ is given by $g_\ell \nu_\ell$ for all $\ell\in \N$. (This is well-defined on $\Sigma_\ell$.) Denote by $B^{\ell}$ the second fundamental form of the immersion $M_\ell$ and assume that for every open set $U \subset \subset \Om$ we have 
$$\sup_{\ell\in \N} \int_{\Sigma_\ell \cap \iota_\ell^{-1}(U)} |B^{\ell}|^q \,d\mathcal{H}^n<\infty \text{ for some } q>1.$$
Then every varifold limit of $|M_\ell|$ is $\textbf{q}_\sharp V$ for an oriented integral varifold $V$ and the first variation of $\textbf{q}_\sharp V$ (as a functional on $C^1_c$-vector fields in $\Om$) is represented by integration (with respect to $\textbf{q}_\sharp V$) of $g \nu$, where $\nu$ is a measurable unit normal field on $\textbf{q}_\sharp V$ (a.e.~defined and orthogonal to $\textbf{q}_\sharp V$). 
\end{thm}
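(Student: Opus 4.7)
The plan is to reduce Theorem \ref{thm:immersions} to the general compactness Theorem \ref{thm:overall} by associating to each two-sided $C^2$-immersion $\iota_\ell:\Sigma_\ell \to \Omega$ (with chosen unit normal $\nu_\ell$) the oriented integral $n$-varifold $V^\ell$ in $\Omega$ obtained by pushforward under $p \mapsto (\iota_\ell(p), \nu_\ell(p))$:
\[
V^\ell(\varphi) \,=\, \int_{\Sigma_\ell} \varphi(\iota_\ell(p), \nu_\ell(p)) \, d\mathcal{H}^n(p), \qquad \varphi \in C^0_c(U \times \R^N).
\]
Because $\iota_\ell$ is a proper $C^2$-immersion of an $n$-manifold into the $(n+1)$-manifold $\Omega$, the inverse function theorem applied pairwise to sheets shows that the self-intersection locus in $M_\ell$ is $\mathcal{H}^n$-negligible, so at $\|V^\ell\|$-a.e.~$x \in M_\ell$ the preimage is a singleton $\{p\}$ and $V^\ell$ has the oriented structure $(M_\ell, \xi_\ell, 1, 0)$ with $\xi_\ell(x) = \nu_\ell(p)$.

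Next I verify the hypotheses of Theorem \ref{thm:overall}. The mass $\|\textbf{q}_\sharp V^\ell\|(U) = \mathcal{H}^n(\iota_\ell^{-1}(U))$ is locally uniformly bounded by hypothesis, and the boundary term vanishes identically since $\Sigma_\ell$ is closed: $\textbf{c}(V^\ell)=(\iota_\ell)_\sharp\llbracket \Sigma_\ell\rrbracket$ gives $\partial \textbf{c}(V^\ell)=(\iota_\ell)_\sharp \partial \llbracket \Sigma_\ell\rrbracket=0$. For the curvature identity of Definition \ref{Dfi:Riemannin_oriented_curvature}: locally around each $p \in \Sigma_\ell$ the map $\iota_\ell$ is a $C^2$-embedding onto a patch of $M_\ell$, to which the claim proved right after Definition \ref{Dfi:Riemannin_oriented_curvature} applies, yielding (\ref{eq:Riemannin_oriented_curvature_B}) locally with coefficients $W^\ell_{ia}(\iota_\ell(p),\pm\nu_\ell(p)) = \pm\delta_i(\nu_\ell \cdot \textbf{e}_a)(p)$. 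A $C^1$ partition of unity on $\Sigma_\ell$ subordinate to a locally finite cover by such embedded patches, combined with the linearity of (\ref{eq:Riemannin_oriented_curvature_B}) and the pushforward definition of $V^\ell$, patches these local identities into a single global identity for $V^\ell$. The pointwise control $|W^\ell_{ia}|\le |B^\ell|$ and the area formula give
\[
\int_{U \times \R^N} |W^\ell_{ia}|^q \, dV^\ell \,\leq\, C \int_{\Sigma_\ell \cap \iota_\ell^{-1}(U)} |B^\ell|^q \, d\mathcal{H}^n,
\]
which is locally uniformly bounded. Finally, the per-sheet assumption $\vec{H}_{M_\ell}(p) = g_\ell(\iota_\ell(p))\,\nu_\ell(p)$ on $\Sigma_\ell$, combined with the $\|V^\ell\|$-a.e.~multiplicity-one structure noted above, shows that the generalised mean curvature of $\textbf{q}_\sharp V^\ell$ equals $g_\ell\,\xi_\ell$ a.e.~on $M_\ell$, which is of the required form $g_\ell\cdot(\text{measurable unit normal})$.

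Theorem \ref{thm:overall} then supplies a limit oriented integral varifold $V$ with $\textbf{q}_\sharp V^\ell \to \textbf{q}_\sharp V$ (subsequentially, as unoriented varifolds), having curvature in $L^q_{\text{loc}}$, and a measurable unit normal $\nu$ on $\textbf{q}_\sharp V$ with $\vec{H}=g\nu$ a.e.; this is precisely the conclusion of Theorem \ref{thm:immersions}. The main obstacle I anticipate is the rigorous partition-of-unity patching of the local curvature identities: one must verify that summing the identities (\ref{eq:Riemannin_oriented_curvature_B}) — each established only for an embedded $C^2$ piece as in Section \ref{Riemannian} — after weighting by a smooth partition of unity on $\Sigma_\ell$ and pushing forward, produces the single global identity of Definition \ref{Dfi:Riemannin_oriented_curvature} for $V^\ell$, with coefficients $W^\ell_{ia}$ defined $V^\ell$-a.e.~and satisfying the pointwise bound by $|B^\ell|$. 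A secondary technical point is the careful handling of self-intersections (ensuring that the varifold mean curvature at $\|V^\ell\|$-a.e.~point truly has magnitude $|g_\ell|$ and is not reduced by cancellation of oppositely-oriented coinciding sheets); here the $\mathcal{H}^n$-negligibility of tangential self-intersections for a proper $C^2$-immersion in codimension one closes the gap.
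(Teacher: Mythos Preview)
Your proposal is correct and follows essentially the same route as the paper: associate to each two-sided immersion the oriented integral varifold $V^\ell$, observe that the associated currents are cycles (since $\Sigma_\ell$ is closed) so that Hutchinson's compactness applies, verify the curvature identity with locally uniform $L^q$ bounds, and invoke the general compactness result (you cite Theorem~\ref{thm:overall}; the paper equivalently says ``repeat the proof of Theorem~\ref{thm:main} up to (\ref{eq:cuvature_identity_limit_with_g})''). You are in fact more careful than the paper about the immersion-versus-embedding technicalities (the partition-of-unity patching and the self-intersection issue), which the paper glosses over by simply writing $V^\ell=(M_\ell,\nu_\ell,1,0)$; note, however, that tangential self-intersections of a proper $C^2$-immersion are \emph{not} $\mathcal{H}^n$-negligible in general (think of a double cover), but this does not harm the argument: same-orientation overlaps leave $\vec H^\ell=g_\ell\nu_\ell$ intact, while opposite-orientation tangential overlaps force $g_\ell=0$ there (the mean curvature vector is orientation-independent), so $\vec H^\ell=0=g_\ell\nu_\ell$ holds trivially.
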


For example, let $g_\ell =g \equiv 2$ and $\Om = B^n_1(0) \times (-1,1)$, and consider $S = \{(x,y):x\in B_1^n(0), y =-\sqrt{1-|x|^2}\}$ and $M_\ell = (S + \frac{1}{\ell}\textbf{e}_{n+1}) \cup (S - \frac{1}{\ell}\textbf{e}_{n+1})$, where the choice of orientation $\nu_\ell$ is given by the unit vector whose scalar product with $\textbf{e}_{n+1}$ is positive. Then the varifold limit is $2|S|$ and $\nu$ is the upwards unit normal to $S$. Note that, with the given orientation, $M_\ell$ are not boundaries.

\begin{proof}
The two-sidedness assumption permits to consider the sequence of oriented integral varifolds $V^\ell=(M_\ell, \nu_\ell, 1,0)$. As the hypersurfaces $M_\ell$ are closed in $\Om$, the associated currents are cycles in $\Om$, hence Hutchinson's compacntess theorem applies to yield a (subsequential) limit $V$ in the sense of oriented integral varifolds. Then the proof of Theorem \ref{thm:main} can be repeated up to the point where we establish that $V$ satisfies (\ref{eq:cuvature_identity_limit_with_g}), from which the prescribed mean curvature condition is seen to hold for the limit and the conclusion of the theorem follows.
\end{proof}

\begin{oss}
In view of the hypotheses in Hutchinson's compactness theorem in \cite{Hut} (recalled in Section \ref{prelim}) one could adapt the notion of curvature varifolds with boundary, following Mantegazza's work \cite{Man}, to the oriented case and obtain a compactness result for $M_\ell$ immersed hypersurfaces-with-boundary with masses of the boundaries locally uniformly bounded in $\ell$.
\end{oss}

\begin{small}

\end{small}

\end{document}